\documentclass{amsart}

\usepackage[fontsize=11pt]{scrextend}
\usepackage[normalem]{ulem}
\usepackage[svgnames,x11names]{xcolor}
\usepackage{amssymb,amsfonts}
\usepackage[pagebackref,colorlinks,urlcolor={DarkOliveGreen4},
linkcolor={DarkSlateBlue},
citecolor={Chocolate4}]{hyperref}
\usepackage[capitalize]{cleveref}
\usepackage{amssymb,textcomp}
\usepackage{graphicx}
\numberwithin{equation}{section}
\usepackage{bbm}
\usepackage{enumerate}
\usepackage[utf8]{inputenc}
\usepackage[T1]{fontenc}
\usepackage[mathscr]{eucal}
\usepackage{etoolbox}
\usepackage {tikz}
\usetikzlibrary {positioning}
\usepackage[a4paper, twoside=false, vmargin={2cm,3cm}, includehead]{geometry}
\usepackage{nameref}
\newtheorem{lemma}{Lemma}[section]
\newtheorem{theorem}[lemma]{Theorem}
\newtheorem*{theorem*}{Theorem}
\newtheorem{corollary}[lemma]{Corollary}

\newtheorem*{question*}{Open question}
\newtheorem{proposition}[lemma]{Proposition}
\newtheorem*{proposition*}{Proposition}

\newtheorem*{problem*}{Problem}
\theoremstyle{definition}
\newtheorem{definition}[lemma]{Definition}
\newtheorem*{claim*}{Claim}

\newtheorem*{conjecture*}{Conjecture}

\newtheorem{remark}[lemma]{Remark}
\newtheorem*{remark*}{Remark}

\usepackage{framed}
\usepackage{comment}

\makeatletter
\theoremstyle{plain}
\newtheorem*{namedthm}{\namedthmname}
\newcounter{namedthm}
	

\newcommand{\C}{{\mathbb C}}

\newcommand{\N}{{\mathbb N}}

\newcommand{\Q}{{\mathbb Q}}

\newcommand{\T}{{\mathbb T}}
\newcommand{\Z}{{\mathbb Z}}

\title{Uniqueness of a topological Furstenberg system}

\author{Ioannis Kousek}
\email{ioannis.kousek@warwick.ac.uk}
\address{Department of Mathematics, University of Warwick, Coventry, United Kingdom}

\author{Vicente Saavedra-Araya}
\email{vicente.saavedra-araya@warwick.ac.uk}
\address{Department of Mathematics, University of Warwick, Coventry, United Kingdom}

\begin{document}

\begin{abstract}
\small{Given a semigroup $G$ and a bounded function $f: G \to \mathbb{C}$, a topological Furstenberg system of $f$ is a topological dynamical system $\mathbb{X}=(X, (T_g)_{g \in G})$ that encodes the dynamical behaviour of $f$. 
We show that $\mathbb{X}$ is unique up to topological isomorphism, thus providing a topological analogue of the measurable case established by Bergelson and Ferré Moragues for amenable semigroups. We also provide necessary and sufficient conditions for subsets of a
group to have isomorphic Furstenberg systems. In addition, we study sets with minimal Furstenberg systems and identify them 
as a special subclass of dynamically syndetic sets. Moreover, we use this notion to obtain a new characterization of sets of topological recurrence.}
\end{abstract}

\maketitle

\tableofcontents

\section{Introduction}
\label{sec:1}
\subsection{Background}
The use of topological dynamics to obtain results of a Ramsey-theoretic nature
was first implemented by Furstenberg and Weiss in \cite{Furstenberg_Weiss}, where they
provided a dynamical proof of the  multidimensional van der Waerden theorem. This was
achieved using a topological variant of Furstenberg's correspondence
principle—originally introduced by Furstenberg in his seminal ergodic proof
of Szemerédi's theorem \cite{Furstenberg_SzemerediProof}. In this way, they reformulated van der Waerden's theorem as a statement about multiple recurrence in topological dynamics, which they subsequently proved. Since then, variations of this approach have been successfully adapted to
prove a wide range of combinatorial results, including the celebrated
polynomial van der Waerden theorem of Bergelson and Leibman
\cite{Bergelson_Leibman_vdW}, as well as the breakthrough result of Moreira \cite{Moreira} on the partition regularity of $\{x, x+y, xy\}$ in the integers.

The essence of this topological correspondence principle is that, given a 
bounded function $f:\N\to \C$, the statistical 
properties of the sequence $(f(n))_{n\in \N}$ can be 
encoded via 
a topological dynamical system $(X,T)$. In the setting of 
ergodic theory, the measurable version of Furstenberg's 
correspondence, associates to a bounded function $f:\N\to \C$ some measure preserving system 
$(X,\mathcal{B},\mu,T)$,
provided that some (uniform) Ces\'aro averages of the 
sequence $(f(n))_{n\in\N}$ exist. 

These notions can be extended to more general semigroup settings. 
Given a semigroup $G$, a \textit{topological dynamical $G$-system} 
$(X,(T_g)_{g\in G})$ is 
a compact space $X$ together with an action of $G$ on $X$ by 
continuous transformations, meaning that $T_g:X\to X$ is continuous and $T_{gh}=T_gT_h$ for any $g,h\in G$. We sometimes refer to a topological dynamical $G$-system as simply a \textit{system}, and, for brevity, we sometimes refer to the system $(X,(T_g)_{g\in G})$ by $(X,(T_g))$ or by $\mathbb{X}$. Note that, when $G$ is a group, the action is automatically by homeomorphsims as $T_{g^{-1}}=T_{g}^{-1}$, for any $g\in G$, and for the identity element $e$ of $G$, $T_{e}$ is the identity map. 
Throughout this paper, we consider semigroups to which we have added an 
artificial identity element, if necessary (these objects are also known as 
monoids). Moreover, all our semigroups are endowed with the discrete topology. 

A point $a\in X$ is called \textit{transitive} if $\{T_ga: g\in G\}$ is dense in $X$. A system that admits a transitive point is called \textit{transitive} and a system with all its points being transitive is called \textit{minimal}. 

\begin{definition}\label{FS_definition}
Let $G$ be a semigroup and let $f: G \to \C$ be bounded. A dynamical 
$G$-system $(X,(T_g)_{g\in G})$ is called a \emph{topological 
Furstenberg system associated to $f$} if 

\begin{enumerate}[(i)]
    \item \label{cond i} there exist a function $F \in C(X)$ and a transitive point $a \in X$ such that 
    $f(g) = F(T_g a)$ for all $g \in G$, and 
    \item \label{cond iii} for any $x \neq y$ in $X$, there exists $g \in G$ such that 
    $F(T_g x) \neq F(T_g y)$.
\end{enumerate}

By the complex Stone--Weierstrass theorem, condition~(\ref{cond iii}) can be replaced by

\begin{enumerate}[(iii)]
    \item \label{cond ii}   $C(X)$ is the smallest $(T_g)$-invariant $C^{*}$-algebra generated by $F$.\footnote{Here the involution is given by complex conjugation. Equivalently, the smallest $(T_g)$-invariant subalgebra that is closed under complex conjugation and contains $F$ is dense in $C(X)$.} 
\end{enumerate}

If condition~(\ref{cond iii}) is dropped, we will call the system a \emph{generalised topological Furstenberg system}. If condition~(\ref{cond i}) is weakened so that $a\in X$ is not transitive, we speak of a \emph{wasteful Furstenberg system}. 
\end{definition}

For the precise definition of measurable Furstenberg 
systems we refer the reader to \cite[Definition 1.2]{Bergelson_Moragues}.
 When it is clear from the context, we will omit the term 
“topological” and refer to a \textit{Furstenberg system of $f$}. 

For a given semigroup $G$ and a bounded function 
$f : G \to \mathbb{C}$, one can
always construct a symbolic Furstenberg system associated
to $f$. Indeed, this is the original construction 
of Furstenberg and Weiss in \cite{Furstenberg_Weiss} and it motivates the 
more abstract Definition \ref{FS_definition}.

To construct this system, we let $K := \overline{f(G)}$, which is compact by assumption, and consider the space $X:=K^G$ endowed with the product topology. Define an action $R_g : K^G \to K^G$ by
\[
R_g\big( (x(u))_{u \in G} \big) := (x(ug))_{u \in G}, \quad g \in G.
\]
It is straightforward to verify that $R_{gh} = R_g R_h$ 
for all $g,h \in G$, 
so that the \textit{right shift} $(R_g)_{g \in G}$ 
defines an 
action of $G$ on $K^G$. We identify $f$ with the point $(f(g))_{g \in G} \in K^G$, and define
\[
X_{R,f} := \overline{\{ R_h f : h \in G \}} 
= \overline{\{ (f(gh))_{g \in G} : h \in G \}},
\]
where the closure is taken in $K^G$. Setting 
$F(x) = x(e)$, for $x\in X_{R,f}$, condition~(\ref{cond i}) is immediately satisfied, and condition~(\ref{cond iii}) clearly holds. Hence, 
$(X_{R,f}, (R_g))$ 
is a Furstenberg system of $f$.

To determine when two topological systems are essentially equivalent, we use the notion of topological isomorphism (or topological conjugacy). 
Before stating our main results, we recall some standard terminology.
A \textit{factor map} between two topological $G$-systems $(X,(T_g))$ and $(Y,(S_g))$ is a continuous surjection $\phi:X\to Y$ satisfying $\phi \circ T_g=S_g\circ \phi$, meaning that $\phi(T_g(x))=S_g(\phi(x))$, for all $x\in X$ and $g\in G$. In this case, we call $(Y,(S_g))$ a \textit{factor} of $(X,(T_g))$ or the latter an \textit{extension} of the former. 

A \textit{topological isomorphism} between two topological $G$-systems $(X,(T_g))$ and $(Y,(S_g))$ is a homeomorpshim $\phi:X\to Y$ satisfying $\phi \circ T_g=S_g\circ \phi$, for all $g\in G$. Note that, $\phi$ is a factor map from $(X,(T_g))$ to $(Y,(S_g))$, but it also follows that $\phi^{-1} \circ S_g=T_g\circ \phi^{-1}$, and in particular, $\phi^{-1}$ is a factor map from $(Y,(S_g))$ to $(X,(T_g))$ as well. In this case, we say that $(Y,(S_g))$ and $(X,(T_g))$ are \textit{topologically isomorphic}.

We call $(Y,(S_g))$ \textit{a subsystem} of $(X,(T_g))$ if 
$Y$ is a closed invariant subset of $X$, i.e. $T_gY\subset Y$, and $(S_g)=(T_g|_{Y})$, for all $g\in G$. Observe that, a system $(X,(T_g))$ is minimal if and only if it has no non-trivial subsystems (i.e. other than $\mathbb{X}$ itself), as orbit closures are $(T_g)$-invariant and thus give rise to subsystems. 
\subsection{Main results}

\noindent As discussed above, for any semigroup $G$  and any bounded function $f$, one can always construct a corresponding Furstenberg system. It is thus natural to ask whether the symbolic construction yields the \emph{unique} Furstenberg system associated to $f$, in the sense that any other Furstenberg system of $f$ is isomorphic to it. The analogue of this question was addressed by Bergelson and Ferr\'e Moragues \cite{Bergelson_Moragues} in the measurable setting for countable amenable semigroups. They showed that, under natural assumptions, the measurable Furstenberg system associated with a bounded function $f$ is unique up to measurable isomorphism. The lack of the measure space structure in the topological case makes the topological
correspondence more elastic, and we are able to prove the 
following. 

\begin{theorem}\label{FS are isomorphic intro}
Let $G$ be any semigroup and $f:G\to \C$ be bounded. Then, 
any two Furstenberg systems of $f$ are topologically 
isomorphic. 
Moreover, any such system is a factor 
of any generalised Furstenberg 
system of $f$ and there is a maximal generalised 
Furstenberg system of $f$ that contains all the 
other generalised Furstenberg systems as its factors.
\end{theorem}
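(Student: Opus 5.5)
The idea is to compare every (generalised) Furstenberg system of $f$ with the symbolic system $(X_{R,f},(R_g))$ through a canonical "coding" map. Let $(X,(T_g))$ be a generalised Furstenberg system of $f$, with $F\in C(X)$ and transitive point $a$ such that $f(g)=F(T_ga)$. Define
\[
\pi: X\to K^G,\qquad \pi(x):=\bigl(F(T_u x)\bigr)_{u\in G}.
\]
I will show that $\pi$ is a factor map onto $X_{R,f}$, and that it is an isomorphism precisely when condition~(\ref{cond iii}) holds. This immediately yields the first two claims of the theorem.

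\textbf{Step 1: $\pi$ is a factor map onto $X_{R,f}$.} Each coordinate $x\mapsto F(T_ux)$ is continuous, so $\pi$ is continuous for the product topology. It is equivariant, since
\[
\pi(T_gx)(u)=F(T_uT_gx)=F(T_{ug}x)=\pi(x)(ug)=(R_g\pi(x))(u),
\]
using $T_{ug}=T_uT_g$. Here the order of composition matches the right shift. Also $\pi(a)=(f(u))_{u\in G}=f$, and hence $\pi(T_ha)=R_hf$. Since $X=\overline{\{T_ha\}}$ and $X$ is compact, $\pi(X)$ is compact and contains $\{R_hf\}$, so $\pi(X)\supseteq X_{R,f}$. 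Conversely, $\pi(X)=\pi(\overline{\{T_ha\}})\subseteq\overline{\pi(\{T_ha\})}=X_{R,f}$. Thus $\pi$ is a factor map onto $X_{R,f}$. In particular, every generalised Furstenberg system has the symbolic system as a factor.

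\textbf{Step 2: isomorphism under condition (\ref{cond iii}).} If condition~(\ref{cond iii}) holds, then $\pi$ is injective by the very definition of $\pi$. A continuous bijection from a compact space to a Hausdorff space is a homeomorphism, so $\pi$ is a topological isomorphism. Here I assume, as is implicit in the paper, that compact spaces are Hausdorff; this is needed for $C(X)$ to separate points anyway. Hence every Furstenberg system of $f$ is isomorphic to $X_{R,f}$, so any two are isomorphic to each other. Combining with Step 1, any Furstenberg system is a factor of any generalised one, via $\phi^{-1}\circ\pi$ where $\phi$ is the isomorphism onto $X_{R,f}$.

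\textbf{Step 3: the maximal generalised system.} This is the main point requiring an actual construction. I would take $X=\beta G$, the Stone--Čech compactification of the discrete semigroup $G$, with the action $T_g$ given by the continuous extension of $u\mapsto ug$ (right multiplication by $g$ extended from $G$ to $\beta G$). The transitive point is $a=e$, and $F$ is the continuous extension $\beta f$ of $f$, which exists because $f$ is bounded. Then $F(T_ge)=f(g)$, so this is a generalised Furstenberg system.

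Given any other generalised system $(Y,(S_g),F',b)$, I would define $\psi:\beta G\to Y$ as the continuous extension of $g\mapsto S_gb$. Equivariance, $\psi\circ T_g=S_g\circ\psi$, holds on the dense set $G$, because
\[
\psi(T_gu)=\psi(ug)=S_{ug}b=S_uS_gb,
\]
and both sides of the required identity are continuous, so it extends to $\beta G$. Surjectivity of $\psi$ follows from transitivity of $b$ together with compactness of $\beta G$. Moreover $F'\circ\psi=F$, since the two continuous functions agree on $G$.

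The only delicate point is the convention for the action. One must check that $T_{gh}=T_gT_h$ matches the right-shift convention, and that $T_g$ is continuous on $\beta G$; the latter holds because $T_g$ is the extension of a map into a compact space. I expect this bookkeeping to be the main (minor) obstacle; everything else is formal.
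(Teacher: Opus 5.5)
Your Steps 1 and 2 are correct and follow the paper's proof of Theorem~\ref{thm:Uniqueness} and the first half of Theorem~\ref{maximalFS}, via the coding map $x\mapsto(F(T_ux))_{u\in G}$. Deducing $\pi(X)\subseteq X_{R,f}$ from $\pi(\overline{O})\subseteq\overline{\pi(O)}$ is a tidier replacement for the paper's cylinder-set argument.

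The gap is in Step 3, exactly at the convention you flagged but did not settle. Suppose $T_g$ is the continuous extension of right multiplication $u\mapsto ug$. Then $T_gT_h$ and $T_{hg}$ are continuous and agree on the dense set $G$, since both send $u$ to $uhg$. Hence $T_gT_h=T_{hg}$. For non-commutative $G$ this is an anti-action, so $(\beta G,(T_g))$ is not a $G$-system. In particular it is not a generalised Furstenberg system in the sense of Definition~\ref{FS_definition}; for commutative $G$ your version is fine. Your own equivariance computation shows the same failure. It ends at $\psi(T_gu)=S_{ug}b=S_uS_gb$, whereas the required identity is $\psi(T_gu)=S_g\psi(u)=S_gS_ub=S_{gu}b$. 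These differ in general: in $(X_{R,f},(R_g))$ with $b=f$, the $e$-coordinates of $R_{ug}f$ and $R_{gu}f$ are $f(ug)$ and $f(gu)$.

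The repair is to use left multiplication. Let $T_g\colon\beta G\to\beta G$ be the continuous extension of $u\mapsto gu$. Then $T_gT_h$ and $T_{gh}$ agree on $G$, hence everywhere, so this is a genuine action. The point $e$ is still transitive with $\beta f(T_ge)=f(g)$. Moreover, $\psi(T_gu)=S_{gu}b=S_gS_ub=S_g\psi(u)$ for $u\in G$, and this extends to $\beta G$ by continuity. This is consistent with the symbolic picture, where $R_gR_uf=R_{gu}f$. The paper's proof of Theorem~\ref{maximalFS} also writes $\sigma_hq=q\cdot h$. However, the identity it actually verifies, $T_h\bigl(q\text{-}\lim_{g\in G}T_ga\bigr)=hq\text{-}\lim_{g\in G}T_ga$, is precisely this left-multiplication equivariance. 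With that correction, your Step 3 coincides with the paper's argument.
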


The construction of the maximal system referred to in the 
previous result is given in Section
\ref{section:Gbeta}, and it naturally involves the Stone-Čech 
compactification of the underlying semigroup. Section 
\ref{sec:2} also includes some basic, necessary facts about 
ultrafilters, other constructions and basic examples of topological 
Furstenberg systems.

Theorem~\ref{FS are isomorphic intro} is proved in Section~\ref{sec:3}, 
where we also establish its analogue for joint Furstenberg systems of 
finite families of bounded functions. At the end of that section, we 
investigate under which conditions on a group $G$ the natural symbolic 
system constructed via left shifts, denoted by 
$(\widetilde{X}_{L,f}, (L_g)_g)$ (see Section~\ref{sec:2.1} for the precise 
construction), is a Furstenberg system of a given bounded function 
$f: G \to \mathbb{C}$. We address this question in general, although the case of infinite groups is more intricate. In particular, even for finite groups the problem already exhibits interesting features, and we prove the following.

\begin{proposition}
Let $G$ be a finite group. Then $(\widetilde{X}_{L,f}, (L_g)_g)$ is a 
Furstenberg system for every function $f: G \to \mathbb{C}$ if and only 
if $G$ is a Dedekind group (i.e., every subgroup is normal).
\end{proposition}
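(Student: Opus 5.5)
The plan is to reduce the question to a comparison of finite transitive $G$-sets, using Theorem~\ref{FS are isomorphic intro}. For finite $G$ every orbit is already closed, so both $X_{R,f}=\{R_hf : h\in G\}$ and $\widetilde{X}_{L,f}$ are finite transitive $G$-sets. A finite transitive $G$-set is isomorphic to $G/S$, where $S$ is the stabiliser of a point.

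I assume that $(\widetilde{X}_{L,f},(L_g))$, with its evaluation function $F$ and distinguished point from Section~\ref{sec:2.1}, always satisfies condition~(\ref{cond i}), so that it is a generalised Furstenberg system. I have not seen that construction yet, so this assumption should be checked first. Granting it, Theorem~\ref{FS are isomorphic intro} shows that $\widetilde{X}_{L,f}$ factors onto $X_{R,f}$. It is then a Furstenberg system if and only if this factor map is injective. Since both sets are finite, this holds if and only if $|\widetilde{X}_{L,f}|=|X_{R,f}|$.

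Next I compute both sizes in terms of stabilisers. We have $|X_{R,f}|=[G:S_R]$, where
\[
S_R=\{h\in G: f(uh)=f(u)\ \text{for all } u\in G\}.
\]
In the same way $|\widetilde{X}_{L,f}|=[G:S_L]$, where
\[
S_L=\{h\in G: f(hu)=f(u)\ \text{for all } u\in G\}.
\]
This is up to conjugation, depending on the exact normalisation of the left shift. So the criterion becomes: $\widetilde{X}_{L,f}$ is a Furstenberg system if and only if $|S_L|=|S_R|$, or, if the construction forces it, if and only if $S_L$ is conjugate to $S_R$.

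For the Dedekind direction, let $h\in S_R$. Since $S_R$ is normal, for every $u$ we have $u^{-1}hu\in S_R$, and therefore $f(hu)=f(u\cdot u^{-1}hu)=f(u)$. This gives $S_R\subseteq S_L$. The symmetric argument gives $S_L\subseteq S_R$, so the two stabilisers are equal. Hence $\widetilde{X}_{L,f}$ is a Furstenberg system for every $f$.

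For the converse, suppose a subgroup $H\le G$ is not normal, and pick $g$ with $gH\neq Hg$. I would look for an explicit $f$ whose left and right stabilisers are not conjugate, or differ in size. The natural first choice is an indicator $f=\mathbf 1_A$ of a union of cosets, such as $A=H\cup gH$. For such $A$ the right stabiliser contains $H$, but the left stabiliser can collapse, because $A$ is not a union of right $H$-cosets.

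If the size criterion turns out to be too coarse (indicators of single cosets $gH$ give conjugate stabilisers $H$ and $gHg^{-1}$), I would instead check condition~(\ref{cond iii}) directly. The aim is to find two distinct points of $\widetilde{X}_{L,f}$, indexed by left cosets, that $F\circ L_g$ cannot separate, because the values of $F$ along orbits only see right cosets $Hg$.

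The main obstacle is this converse construction. It requires the exact normalisation of $L_g$ and of the transitive point, and a choice of $f$ for which the mismatch between left and right cosets of a non-normal $H$ really breaks condition~(\ref{cond iii}).
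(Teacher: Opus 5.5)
\textbf{Dedekind direction: correct, but your framing assumption is false.} With $L_hx=(x(h^{-1}g))_g$, your stabiliser computations are exactly right; the stabiliser of $f$ is $S_L$ itself, with no conjugation. Normality then gives $S_L=S_R$, so $\widetilde{X}_{L,f}\cong G/S_L=G/S_R\cong X_{R,f}$. This is the same content as the paper's check that $(f(gh))_g\mapsto(f(h^{-1}g))_g$ is well defined and injective.

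The assumption you route this through is false. The paper notes that with the evaluation at $e$ and the point $f$ one gets $F(L_hf)=f(h^{-1})$. More seriously, $\widetilde{X}_{L,f}$ need not be a generalised Furstenberg system of $f$ at all: in the example below it has fewer points than $X_{R,f}$, so it cannot factor onto it. So the ``if'' half of your size criterion is unjustified. The correct criterion is that $S_L$ and $S_R$ be conjugate, which is all the Dedekind direction needs. The ``only if'' half, namely $\widetilde{X}_{L,f}\cong X_{R,f}\Rightarrow [G:S_L]=[G:S_R]$, holds regardless.

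\textbf{Converse: genuine gap.} You never produce an $f$, and your candidate $\mathbbm{1}_{H\cup gH}$ fails. In $S_3$ take $H=\{e,(12)\}$ and $g=(13)$. Then $A=\{e,(12),(13),(123)\}$, $S_R=H$ and $S_L=\{e,(13)\}$, which are conjugate. In fact, in $S_3$ every two-valued $f$ has conjugate stabilisers, so no indicator function can work and you need more values.

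Your fallback of checking condition (iii) for a particular $F$ is also misdirected. The definition lets you choose $F$ and the transitive point freely, so you must rule out every isomorphism.

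The missing idea is to let $f$ take distinct values on the distinct right cosets $Hx$ of the non-normal subgroup $H$. Then
\begin{itemize}
\item $S_L=\{c: Hcu=Hu\ \forall u\}=H$;
\item $S_R=\{h: uhu^{-1}\in H\ \forall u\}=\bigcap_{u\in G}u^{-1}Hu$, which is a proper subgroup of $H$ because $H$ is not normal.
\end{itemize}
Hence $|\widetilde{X}_{L,f}|=[G:H]<[G:S_R]=|X_{R,f}|$, and the two systems cannot be isomorphic. This is essentially the paper's example: it labels right cosets of $\langle b\rangle$ with $b\in H$, $ba\notin aH$, and derives a contradiction from the explicit form of a putative isomorphism. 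In your stabiliser language the contradiction is just a cardinality count.
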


Note that for any semigroup $G$, any set $A \subset G$ can 
be naturally identified with a function $f_A: G \to \{0,1\}$ or a point $a\in \{0,1\}^G$.  
Therefore, we can refer interchangeably to the Furstenberg 
system of $A$, $f_A$ or $a$, and the first question that 
emerges is when the respective Furstenberg systems of 
different subsets are isomorphic. We say 
that two sets $A, B \subset G$ are isomorphic if their 
Furstenberg systems are topologically isomorphic.  

In this direction, we can provide necessary and 
sufficient conditions for $A\subset G$ and $B\subset G$ 
to be isomorphic; we refer the reader to 
Theorem~\ref{characterising isomorphic sets} for the full 
result as it requires some technical 
terminology. As a special case of the main result, we 
have the following sufficient condition, stated in a 
somewhat combinatorial flavour.

\begin{proposition}\label{isomorphic sets intro}
Let $A,B\subset G$. Suppose that for any $k\in \N$,
any $g_1,\ldots,g_k\in G$ and $\epsilon_1,\ldots,\epsilon_k\in \{0,1\}$, we have that
$$(g_1^{-1}A_{\epsilon_1})\cap \cdots (g_k^{-1}A_{\epsilon_k})\neq \emptyset \iff (g_1^{-1}B_{\epsilon_1})\cap \cdots (g_k^{-1}B_{\epsilon_k})\neq \emptyset,$$
where $A_{1}=A$, $A_{0}=G\setminus A$ and $h^{-1}A=\{g\in G: hg\in A\}$. Then $A$ and $B$ are isomorphic.
\end{proposition}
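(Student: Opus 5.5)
The plan is to show that, under the hypothesis, the symbolic Furstenberg systems of $A$ and $B$ are literally the \emph{same} subsystem of $(\{0,1\}^G,(R_g))$. The identity map is then a topological isomorphism between them. By Theorem~\ref{FS are isomorphic intro}, every Furstenberg system of $A$ is isomorphic to $(X_{R,f_A},(R_g))$, and similarly for $B$. So it suffices to prove that $X_{R,f_A}=X_{R,f_B}$ as subsets of $\{0,1\}^G$.

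The first step is to translate the hypothesis into a statement about orbits. Write $a=f_A$ and $b=f_B$, viewed as points of $\{0,1\}^G$. For $g_1,\ldots,g_k\in G$ and $\epsilon_1,\ldots,\epsilon_k\in\{0,1\}$ we have
\[
(g_1^{-1}A_{\epsilon_1})\cap\cdots\cap(g_k^{-1}A_{\epsilon_k})=\{h\in G:\ a(g_ih)=\epsilon_i \text{ for } i=1,\ldots,k\}.
\]
Since $(R_ha)(g_i)=a(g_ih)$, this set is nonempty if and only if the orbit $\{R_ha:h\in G\}$ meets the basic cylinder
\[
C=\{x\in\{0,1\}^G: x(g_i)=\epsilon_i \text{ for } i=1,\ldots,k\}.
\]
The same holds with $b$ and $B$ in place of $a$ and $A$. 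The hypothesis therefore says exactly this: a basic cylinder meets the $R$-orbit of $a$ if and only if it meets the $R$-orbit of $b$.

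The second step uses the product topology on $\{0,1\}^G$, whose basic open sets are precisely these cylinders $C$. A point $x$ lies in the closure of a set $S$ if and only if every cylinder containing $x$ meets $S$. Hence $x\in X_{R,f_A}$ if and only if every cylinder containing $x$ meets the orbit of $a$. By the first step, this is equivalent to every cylinder containing $x$ meeting the orbit of $b$, that is, to $x\in X_{R,f_B}$. So the two orbit closures coincide, and they carry the same action $(R_g)$. In particular $b\in X_{R,f_A}$, and $b$ is a transitive point of this common space.

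It remains to check that the identity map is an isomorphism in the required sense. The identity on $X:=X_{R,f_A}=X_{R,f_B}$ is a homeomorphism commuting with each $R_g$. Therefore $(X_{R,f_A},(R_g))$ and $(X_{R,f_B},(R_g))$ are topologically isomorphic, and so $A$ and $B$ are isomorphic. Both systems use the same observable $F(x)=x(e)$, with transitive points $a$ and $b$ respectively, but that does not affect the notion of isomorphism used here.

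I expect no serious obstacle in this argument. The only point needing care is the bookkeeping of left versus right in $g^{-1}A=\{h:gh\in A\}$. This has to match the right shift $R_h$, so that the intersections in the hypothesis really are the return-time sets of $a$ to cylinders.
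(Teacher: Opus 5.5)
Your proof is correct and is essentially the paper's argument. The paper obtains this statement as the case $\sigma=\mathrm{id}$ of Proposition~\ref{limit case of isomorphic systems sufficient}: there $b\in\overline{\{R_ga:g\in G\}}$ and $a\in\overline{\{R_gb:g\in G\}}$ make $b$ a transitive point of $X_{R,a}$ with observable $F_e$, and uniqueness then gives the isomorphism. You spell out the translation of the combinatorial hypothesis into this orbit-closure condition, which the paper leaves implicit, and observe directly that the two orbit closures coincide, so the identity map is the isomorphism.
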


In view of Theorem~\ref{FS are isomorphic intro}, given a bounded function 
$f: G \to \C$, we may speak of \emph{the Furstenberg system of $f$}.  
Of special interest are functions whose Furstenberg systems are minimal. In Section~\ref{sec:minimal}, we focus on subsets of $G$, which correspond to $\{0,1\}$-valued functions, and show that \emph{minimal sets} can be realised as a special subclass of sets of visit times of orbits in minimal systems, known as \emph{dynamically syndetic sets}. This is the context of Theorem~\ref{characterizing minimal sets}.  
Moreover, in Theorem~\ref{minimal colourings}, we provide an analogous characterization for \emph{minimal colourings}, which correspond to finite-valued functions on $G$ whose Furstenberg systems are minimal. As a consequence, we obtain that if a finite colouring of $G$ is minimal, then each colour is a minimal set.

Finally, in Section~\ref{top_recurrence}, we relate the 
minimal sets studied in Section~\ref{sec:minimal} to the 
well-known sets of topological recurrence and provide a 
characterization of these sets using (minimal) 
Furstenberg systems.

\textbf{Acknowledgments.} The authors thank their advisor, Joel Moreira, for 
suggesting this problem and for valuable feedback during 
the developing of this work. We also thank Rigoberto
Zelada for helpful discussions.

\section{Constructions of Furstenberg Systems}
\label{sec:2}
\subsection{Symbolic constructions and other examples}\label{sec:2.1} 

Throughout, unless otherwise specified, $G$ denotes a 
semigroup and $f:G\to \C$ a bounded function. 

We begin with the classical symbolic 
Furstenberg system construction from the introduction, 
adding some details that were glossed over before. We 
let $K := \overline{f(G)}$, which is a compact subset 
of $\mathbb{C}$ by assumption, and endow $K^G$ with 
the product topology. Then, $K^G$ is Hausdorff and, by Tychonoff's theorem, compact.

We consider the full right shift action on $K^G$, that 
is $(K^G, (R_g)_{g \in G})$, where
$$R_g\bigl((x(u))_{u \in G}\bigr) = (x(ug))_{u \in G}.$$
Note that for any $g,h\in G$, we have that
$$R_g(R_h\bigl((x(u))_{u \in G}\bigr) = R_g(x(uh))_{u \in G}=(x(ugh))_{u \in G}=R_{gh}\bigl((x(u))_{u \in G}\bigr).$$
Therefore $(R_g)$ forms a $G$-action on $K^G$.
Recall that the product topology on $K^G$ has a basis 
consisting of cylinder sets, that is, sets of the form
$$
C(U_1,\ldots,U_k; g_1,\ldots,g_k)
= \{ x = (x(g))_{g \in G} \in K^G : x(g_1) \in U_1, \ldots, x(g_k) \in U_k \},
$$
where $k \in \mathbb{N}$, $g_1,\ldots,g_k \in G$, and $U_1,\ldots,U_k \subset K$ are open. When $G$ is a group, it is clear that $R_g$ is a 
bijection on $K^G$, for all $g\in G$, with inverse being $R_{g^{-1}}$. To see that for any semigroup $G$ any of these shifts defines a continuous map, simply note that preimages of cylinder sets under a shift are still cylinder sets. This justifies why $(K^{G},(R_g)_{g\in G})$ is a dynamical $G$-system.

We identify $f$ with a point in $K^G$, still denoted by $f$, and define
\begin{equation}
    X_{R,f}=\overline{\Big\{R_h(f):\ h\in G\Big\}}=\overline{\Big\{(f(gh))_{g\in G}:\ h\in G\Big\}},\label{def:X_R}
\end{equation}
where the closure is taken in the full shift $(K^{G},(R_g)_{g\in G})$. In this system, we consider the continuous function  $F:X_{R,f}\to \mathbb{C}$, given by $F(y)=y_{e}$, where $e$ denotes the identity element of $G$.
Clearly, the $(R_g)$-orbit of $F$ separates points according to condition~(\ref{cond iii}) of Definition \ref{FS_definition}. In addition, the point $f=(f(g))_{g\in G}$ is transitive by construction and $F(R_hf)=f(h)$ for every $h\in G$. Therefore, $(X_{R,f},(R_g)_g)$ is indeed a Furstenberg system of $f$.

When $G$ is a group, another natural symbolic Furstenberg system of $f$ is given by considering 
the left shift action, defined by $L_h(x(g))_{g\in G}=(x(h^{-1}g))_{g\in G}$. In this case, we define $\hat{f}=(f(g^{-1}))_{g\in G}$ and we consider the space
\begin{equation}
    X_{L,f}=\overline{\Big\{L_h(\hat{f}(g))_{g\in G}:\ h\in G\Big\}}=\overline{\Big\{(f(g^{-1}h))_{g\in G}:\ h\in G\Big\}},\label{def:X_L}
\end{equation}
and the orbit closure is taken in the full shift $(K^{G},(L_g)_{g\in G})$.  One can readily check that this defines a Furstenberg system of $f$ via the continuous function $F:X_{L,f}\to \mathbb{C}$ given by $F(y)=y_{e}$. Indeed, $(L_g)$ defines a $G$-action on $X_{L,f}$, the point $\hat{f}$ is transitive for $(X_{L,f},(L_g))$ and for any $h\in G$ we see that 
$$F(L_h\hat{f})=F((\hat{f}(h^{-1}g)_{g\in G}))=\hat{f}(h^{-1})=f(h).$$

What is perhaps less clear is whether the seemingly more natural system with phase space
\begin{equation}
    \Tilde{X}_{L,f}=\overline{\Big\{L_h(f(g))_{g\in G}:\ h\in G\Big\}}=\overline{\Big\{(f(h^{-1}g))_{g\in G}:\ h\in G\Big\}},\label{def:X'_L}
\end{equation}
also defines a Furstenberg system of $f$, via some continuous function $F$ on $\Tilde{X}_{L,f}$ (not necessarily the projection onto the coordinate of the identity) that witnesses the dynamics of $f$ along the orbit of $(f(g))$, or some other $(L_g)$-transitive point in $\Tilde{X}_{L,f}$. It turns out that, for most groups, we can find a function for which this system does not define a Furstenberg system. We explore this question in Section~\ref{sec:left_shifts}.

By considering the symbolic right shift system 
described above, it follows that 
in any semigroup $G$, any subset $A\subset G$ can be 
realised as 
$$A=\{g\in G: T_ga \in U\},$$ 
where $(X,(T_g))$ is a Furstenberg 
system of $A$ (or, equivalently, of $f_A:G\to \{0,1\}$ 
defined via $f(g)=1\iff g\in A$), $a\in X$ is a $(T_g)$-
transitive point and $U\subset X$ is a non-empty open 
set. 

Indeed, any transitive $G$-system $(X,(T_g))$ that admits a 
continuous function whose orbit separates points (as in condition~(\ref{cond iii}) of Definition \ref{FS_definition}) is a Furstenberg 
system of some bounded function $f:G\to \C$. Given a 
transitive point $a\in X$ and a bounded continuous 
function 
$F\in C(X)$ whose orbit separates points, we can simply 
consider $f:G\to \C$ given by $f(g)=F(T_ga)$, for each 
$g\in G$. If the system is not transitive but admits 
such a function, then passing to an orbit closure 
yields a subsystem that is a Furstenberg system of 
$f: G \to \mathbb{C}$. In that sense, any system $(X,(T_g))$ is a wasteful and generalised Furstenberg system of some bounded
function $f:G\to \C$.

If $f:\N \to \{1,\ldots,r\}$ is a periodic function, 
say $f(n)=n \pmod{r}$, then its Furstenberg system is 
isomorphic to the rotation $(\T,R_{1/r})$.

If $f:G\to \{1,\ldots,r\}$ is a random colouring, or
more precisely, a sequence of independent and uniformly distributed random variables, namely 
$\mathbb{P}(\{f(g)=i\})=1/r$, then almost surely, the 
Furstenberg system of $f$ is the full shift on 
$r$-symbols, $(\{1,\ldots,r\}^G,(R_g))$.

Furstenberg systems of multiplicative 
functions have lead to interesting connections between 
ergodic theory/topological dynamics and number theory. 
For example, the well-known Chowla conjecture can be 
reformulated as saying that the topological Furstenberg 
system of the Liouville function 
$\lambda:\N\to \{-1,1\}$ is a Bernoulli shift $(\{-1,1\}^\N,\sigma)$. For many deep results, fascinating connections with number theory, 
and interesting examples of diverse behaviours of 
sequences, through Furstenberg systems we refer the reader to \cite{Frantz_2, Frantz_1, Frantz_Host, Frantz_Lem_Rue, Gomilko_Lem_Rue, Kouts_Moragues}.

One may also consider dynamical systems $(X,(T_g))$ with 
anti-actions of the acting semigroup 
$G$ by continuous transformations, by requiring that $T_{gh}=T_hT_g$, 
for all $g,h\in G$. Observe that, when $G$ is a group, an anti-action of $G$ is simply an action of 
$G^{-1}=\{g^{-1}:g\in G\}$, defined by $(T'_{g^{-1}})_{g\in G}=(T_g)_{g\in G}$. 
Of course, when $G$ is abelian any $G$ action is also an 
anti-action and vice versa. Replacing actions with anti-
actions in Definition \ref{FS_definition} we may speak 
of the \textit{Furstenberg anti-stystem} of a function 
and we remark that all the results we present for 
actions, can be carried out for anti-actions as well. 
For completeness we include without proof
the following simple observation. 
\begin{proposition}
A semigroup $G$ is abelian if and only if the Furstenberg 
anti-system of any bounded function $f:G\to \C$ is also its Furstenberg system.    
\end{proposition}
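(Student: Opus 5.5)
The statement needs interpretation first. I read "the Furstenberg anti-system of $f$ is also its Furstenberg system" as follows: there is a system $(X,(T_g))$ that is simultaneously a Furstenberg system of $f$ (with $(T_g)$ an action) and a Furstenberg anti-system of $f$ (with the same maps forming an anti-action). By Theorem~\ref{FS are isomorphic intro} and its anti-action analogue, both objects are unique up to isomorphism, so this reading is well defined.

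For the forward direction, assume $G$ is abelian. Any action is then an anti-action, since $T_{gh}=T_{hg}=T_hT_g$. Conditions (i) and (ii) of Definition~\ref{FS_definition} do not refer to the composition rule. Hence any Furstenberg system of $f$, for instance $(X_{R,f},(R_g))$, is also a Furstenberg anti-system of $f$, and by uniqueness it is \emph{the} anti-system.

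For the converse, assume that for every bounded $f$ the anti-system can be taken to be a Furstenberg system. Then there is $(X,(T_g))$ with transitive point $a$ and $F\in C(X)$ whose orbit separates points, such that
\[
T_{gh}=T_gT_h=T_hT_g \quad \text{for all } g,h\in G.
\]
If the two structures were only isomorphic and not literally equal, I would first transport the anti-action along the isomorphism, so that we reduce to this situation.

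Next I pick an $f$ that detects non-commutativity. Any injective bounded $f$ would do, but it is cleaner to argue directly: the identity above gives $f(gh)=F(T_gT_ha)=F(T_hT_ga)=f(hg)$ for all $g,h$. If $G$ were not abelian, with $gh\neq hg$, I would take $f=f_{\{gh\}}$, the indicator of the singleton $\{gh\}$. Then $f(gh)=1\neq 0=f(hg)$, which is a contradiction, so $G$ is abelian.

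The main obstacle is this reduction in the converse: justifying that we may work with a single family of maps $(T_g)$ that is at once an action and an anti-action. If the hypothesis is instead read as giving only an abstract isomorphism $\phi$ between an anti-system $(Y,(S_g))$ and a system $(X,(T_g))$, I would handle it as follows. The isomorphism intertwines $S_g$ with $T_g$, so the transported maps $\phi S_g\phi^{-1}=T_g$ inherit both composition rules, and we are back in the situation above.
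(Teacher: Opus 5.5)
Your proof is correct. The paper gives no proof of its own: it states this proposition ``without proof'' as a simple observation, so there is nothing to compare against. Your reading is the natural one: after transporting along an isomorphism, a single family of maps satisfies both $T_{gh}=T_gT_h$ and $T_{gh}=T_hT_g$. That forces $f(gh)=f(hg)$, and testing on the indicator of the singleton $\{gh\}$ settles the converse.

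Two small remarks:
\begin{itemize}
\item In the forward direction you do not need uniqueness of anti-systems. For abelian $G$, the maps of any Furstenberg anti-system already form an action, and the same $a$ and $F$ witness conditions (i) and (ii).
\item Your aside that ``any injective bounded $f$ would do'' only works when $|G|\le 2^{\aleph_0}$. The singleton indicator is therefore the right choice for an arbitrary semigroup.
\end{itemize}
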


\subsection{Ultrafilters and the maximal Furstenberg system}\label{section:Gbeta}
Recall that a \textit{compactification} of a topological space $X$ is a 
compact Hausdorff space $Y$ that contains $X$ as a dense subspace. Discrete spaces are completely regular, and so we can consider their Stone-\v{C}ech compactification, which is essentially the maximal one. More precisely, given a discrete semigroup $G$, we denote by $\beta G$  its Stone-\v{C}ech compactification, which is a compactification with the universal property that any bounded (continuous) function $f:G\to \C$ (indeed, any function $f:G\to K$, where $K$ is compact Hausdorff) extends uniquely to a continuous function $\beta f: \beta G\to \C$. In fact, there exists a continuous function $i_G: G\to \beta G$ such that $f=\beta f \circ i_G$.

In this setting, $\beta G$ can be described as the space of ultrafilters on $G$, endowed with the topology whose basis consists of the sets $\overline{A} = \{p \in \beta G : A \in p\}$ for $A \subset G$, making it compact 
Hausdorff. 

Moreover, $G$ can be embedded densely into $\beta G$ by 
identifying each $g \in G$ with the \emph{principal 
ultrafilter at $g$}, which we also denote by $g$, defined 
via
$g = \{A \subset G : g \in A\}.$
Under this identification, $i_G(g) = g$ for all $g \in G$.

The operation of the semigroup $G$ naturally extends to $\beta G$, which
then becomes a right topological semigroup, $(\beta G,\cdot)$. For completeness, if $p,q\in \beta G$, then 
$$p \cdot q=\{A\subset G: \{g\in G: \{h\in G: gh\in A\}\in q\}\in p\}.$$

Given $p\in \beta G$ fixed we denote by $\lambda_p:\beta G\to \beta G$, the 
map $\sigma_p: q\mapsto q\cdot p$, which is continuous. We will further 
need to consider limits along ultrafilters which are defined as follows. Let $q\in \beta G$ and $f: G\to \C$ be a bounded function in $G$. Then, 
$$q-\lim_{g\in G} f(g)=x$$ if for any open neighborhood $U$ of $x$, it 
holds that $\{g\in G: f(g)\in U \}\in q$. It is well-known that for 
functions taking values on compact Hausdorff spaces, these limits 
always exist and are unique (but not independent of the ultrafilter 
$q$). Moreover, as we explain in the proof of Theorem \ref{maximalFS}, it actually holds that $\beta f(q)=q-\lim_{g\in G} f(g)$, for any $q\in \beta G$. 

Now, let $f:G\to \C$ be a bounded function. We proceed to construct 
the maximal generalised Furstenberg system of $f$. This choice of 
terminology is justified by Theorem \ref{FS are isomorphic intro}, as this system is maximal with 
respect to the partial ordering induced by topological factor maps.

Consider the continuous function 
$\beta f: \beta G\to \C$ described above. Note that, for any $g\in G$, it 
holds that $\sigma_g \beta f(e)=\beta f(g)=f(g)$, where $g$ also denotes 
the principal ultrafilter at $g$ and $e$ denotes the principal ultrafilter 
at the identity element, $e$, of $G$. Since $G$ is densely embedded in $\beta G$ it follows that $\{\sigma_g(e): g\in G\}$ is dense in $\beta G$. Therefore, $(\beta G, (\sigma_g))$ is a dynamical $G$-system with $e$ being a transitive point and $\sigma_g \beta f(e)=f(g)$, for any $g\in G$. By definition, $(\beta G, (\sigma_g))$ is a generalised Furstenberg system of $f$. 

We also explain an alternative way to essentially recover the symbolic Furstenberg 
system of $f$, this time with phase space in $\C^{\beta G}$.
Again, we consider the continuous extension $\beta f: \beta G\to \C$ and 
the dynamical $G$-system $(\beta G_f,(\sigma_g))$, where $\beta G_f$ is the 
orbit closure of $\{\sigma_g (\beta f): g\in G\}$. Note also that $\beta G_f \subset C(\beta G)$, because we endow $C(\beta G)$ with the product 
topology, which is the topology of pointwise convergence and therefore, $$q-\lim_{g\in G} \sigma_g \beta f(p)=p-\lim_{g\in G} q-\lim_{h\in G} \sigma_{gh} \beta f=p\cdot q-\lim_{g\in G}\sigma_g\beta f=\beta f(p \cdot q)=\beta f \circ \sigma_q (p),$$ for each $p,q\in \beta G$ and $\beta f \circ \sigma_q $ is continuous.  
Let $F\in C(\beta G_f)$ 
be the evaluation at $e$, the principal ultrafilter at the identity element 
of $G$, namely $F(G)=G(e)$, for $G\in \beta G_f$. The $(\sigma_g)$-orbit of 
$F$ separates points because $G$ is densely embedded in $\beta G$ and 
hence, any two continuous functions (in $\beta G_f$) that agree on $G$ will 
be identical. Then, notice that $F(\sigma_g (\beta f))=\beta f(g)=f(g)$, for
any $g\in G$. Thus, indeed, $(\beta G_f,(\sigma_g))$ is a Furstenberg 
system of $f$.

\
\section{Uniqueness of Furstenberg Systems} 
\label{sec:3}

\subsection{Uniqueness}\label{sec:3.1}

In this section we will prove one of our main results, Theorem
\ref{FS are isomorphic intro}. Throughout, we will consider semigroups $G$ 
and bounded functions $f:G\to \C$. However, we invite the reader to notice 
that all we will demand from $f(G)$ (for the proofs to be carried out) is
that it is contained 
in a compact Hausdorff space.

We begin by observing that any system that is
isomorphic to a Furstenberg system of some function will 
also be its Furstenberg system. More precisely, if 
$\mathbb{X}=(X,(T_g))$ is a 
(generalised/wasteful) Furstenberg system of $f$ and 
$\mathbb{Y}=(Y,(S_g))$ is topologically 
isomorphic to $\mathbb{X}$, then $\mathbb{Y}$ is also a 
(generalised/wasteful) Furstenberg system of $f$. We leave the easy details of this fact to the interested reader.

We proceed to prove the first part of Theorem 
\ref{FS are isomorphic intro}, namely that Furstenberg 
systems associated to the same function are isomorphic.

\begin{theorem}\label{thm:Uniqueness}
Let $G$ be a semigroup and $f:G\to \C$ be bounded. Then, any two Furstenberg 
systems of $f$
are topologically isomorphic.      
\end{theorem}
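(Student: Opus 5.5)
Our plan is to show that every Furstenberg system $(X,(T_g))$ of $f$, with data $F\in C(X)$ and transitive point $a$, is topologically isomorphic to the symbolic system $(X_{R,f},(R_g))$ of \eqref{def:X_R}. Since isomorphism is an equivalence relation, this gives the theorem. Let $K=\overline{f(G)}$ and define $\phi:X\to \C^G$ by
$$\phi(x)=\big(F(T_u x)\big)_{u\in G}.$$
Each coordinate $x\mapsto F(T_u x)$ is continuous, so $\phi$ is continuous into the product topology. Condition~(\ref{cond iii}) of Definition~\ref{FS_definition} says exactly that $\phi$ is injective.

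Next we check equivariance. For $g\in G$ we have
$$\phi(T_g x)=\big(F(T_uT_g x)\big)_u=\big(F(T_{ug}x)\big)_u=R_g\phi(x),$$
using $T_{ug}=T_uT_g$. Moreover $\phi(a)=(F(T_u a))_u=(f(u))_u=f$, so $\phi(T_h a)=R_h f$ for all $h\in G$. It remains to identify the image. Since $X$ is compact and $\phi$ is continuous, $\phi(X)$ is compact, hence closed, in the Hausdorff space $\C^G$. Because $a$ is transitive, $\phi(X)=\phi\big(\overline{\{T_ha\}}\big)\subset\overline{\{R_hf\}}=X_{R,f}$. Conversely, $\phi(X)$ is a closed set containing the orbit $\{R_h f\}$, so it contains $X_{R,f}$. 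Thus $\phi(X)=X_{R,f}$, and in particular $\phi$ takes values in $K^G$.

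Finally, $\phi:X\to X_{R,f}$ is a continuous bijection from a compact space onto a Hausdorff space, hence a homeomorphism, and it intertwines $T_g$ with $R_g$. So $\phi$ is a topological isomorphism. Applying this to two Furstenberg systems $\mathbb{X},\mathbb{Y}$ of $f$ gives isomorphisms $\phi_X,\phi_Y$ onto $X_{R,f}$, and $\phi_Y^{-1}\circ\phi_X$ is the desired isomorphism. Note that this isomorphism sends $a_X$ to $a_Y$ and satisfies $F_Y\circ\psi=F_X$.

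The only delicate point is the compact-to-Hausdorff homeomorphism step. This needs $X$ to be compact and $X_{R,f}$ to be Hausdorff, which holds since $K^G$ carries the product topology. We take compact spaces in the paper to be Hausdorff; if they were not, injectivity would still come from condition~(\ref{cond iii}), but the inverse map's continuity would fail, so Hausdorffness of $X$ should be assumed as part of the definition of a system. Everything else is a routine check, and the image identification uses transitivity of $a$ in an essential way.
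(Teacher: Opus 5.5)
Your proof is correct and follows essentially the same route as the paper. You use the same map $x\mapsto (F(T_gx))_{g\in G}$ onto $X_{R,f}$, injectivity from condition (ii), equivariance from $T_{ug}=T_uT_g$, surjectivity from closedness of the image plus transitivity of $\phi(a)=f$, and the compact-to-Hausdorff lemma; your coordinatewise continuity and $\phi(\overline{\{T_ha\}})\subset\overline{\{R_hf\}}$ steps are just cleaner versions of the paper's cylinder-set arguments.

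One correction to your closing remark: that lemma needs only $X$ compact and $X_{R,f}$ Hausdorff, so the inverse would not fail to be continuous without assuming $X$ Hausdorff. In any case, condition (ii) already forces $X$ to be Hausdorff, since the continuous functions $F\circ T_g$ separate its points.
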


\begin{proof}
Let $(X, (T_g)_{g\in G})$ be a Furstenberg system of $f$, with transitive point $a\in X$ and $F\in C(X)$ as in Definition \ref{FS_definition}. Recalling the symbolic construction presented in (\ref{def:X_R}), it is suffices to show that $(X, (T_g)_{g\in G})$ is topologically isomorphic to $(X_{R,f},(R_g)_{g\in G})$. To this end, we claim that the map
$$\phi_R: X\mapsto X_{R,f},\ \phi_R(x)=(F(T_gx))_{g\in G}$$
is a topological isomorphism between $(X, (T_g)_{g\in G})$ and
$(X_{R,f}, (R_g)_{g\in G})$.

We aim first to show that, for any $x\in X$, $\phi_R(x)\in X_{R,f}$. Consider an arbitrary cylinder set $C=C(U_1,\ldots,U_k\ ;\ g_1,\ldots,g_k)$ containing $\phi_R(x)$. This means that $T_{g_i}x\in V_i:=F^{-1}(U_i)$, for each $i=1,\ldots,k$ and by continuity we have that $V_1,\ldots,V_k\subset X$ are also open sets. Now, since $G$ acts on $X$ by continuous maps, $F\in C(X)$ and $a\in X$ is transitive, we can find $h\in G$ such that $T_{g_ih}a=T_{g_i}(T_ha)\in V_i$, for all $i=1,\ldots,k$. Recall that $F(T_{g_ih}a)=f(g_ih)$ and so we obtain that $f(g_ih)\in U_i$, for all $i=1,\ldots,k$. In other words, there exists $h\in G$ such that $R_h(f)\in C$. Since $C$ is an arbitrary cylinder set containing $\phi_R(x)$ and $R_h(f)\in X_{R,f}$, this shows that $\phi_R(X)\subset X_{R,f}$.

The proof of the continuity of the map $\phi_R$ is 
implicitly contained in the previous argument. For, in order to prove continuity, it suffices to show that 
for any cylinder $C=C(U_1,\ldots,U_k\ ;\ g_1,\ldots,g_k)$ that intersects $\phi_R(X)$ non-trivially, $\phi^{-1}C$ contains an open set.
To this end, we can use the continuity of $T_{g_1},\ldots,T_{g_k}$ and $F$ to find an open set $V\subset X$ such $\phi_R(V) \subset C$. 

To verify that $\phi_R$ commutes the dynamics of 
$(T_g)$ and $(R_g)$, observe that, for every $x\in X$ and $h\in G$,
\begin{equation}\label{eq:2}
\phi_R(T_h x)=(F(T_g(T_hx)))=(F(T_{gh}x))=R_h(F(T_gx))=R_h(\phi_R(x)).    
\end{equation}

The injectivity of $\phi_R$ follows immediately from the equivalent conditions~(\ref{cond iii}) and ~\ref{cond ii} in Definition~\ref{FS_definition}. To prove that $\phi_R$ is surjective, we first note that $\phi_R(X)\subset X_{R,f}$ is 
compact, since $\phi_R$ is continuous, and thus closed, 
as $X_{R,f}$ is Hausdorff. Moreover,
$$\phi_R(a)=(F(T_g a))_g = (f(g))_g \in \phi_R(X)$$ 
is a transitive point in $X_{R,f}$. From \eqref{eq:2} it follows that $\phi_R(X)$ contains the orbit of $\phi_R(a)$ and therefore its orbit closure, which by transitivity, equals $X_{R,f}$.  

Finally, since continuous bijections from compact to Hausdorff spaces are homeomorphisms, we can conclude that $\phi_R$ is a homeomorphism.
\end{proof}

Having established that Furstenberg systems are isomorphic, we may speak of \textit{the Furstenberg system} associated with a given function $f \colon G \to \C$, choosing a representative among these isomorphic systems. 

An almost obvious consequence of this is that 
the Furstenberg system of $f:G\to \C$ is a subsystem of any wasteful Furstenberg system of it. Indeed, if $(X,(T_g)_g)$ is a wasteful Furstenberg system, where $F(T_g a)=f(g)$ for some continuous function $F$ and some $a \in X$, we can recover the Furstenberg system of $f$, by restricting to the subsystem $X'=\overline{\{T_ga: g\in G\}}$. On the other hand, we can trivially add isolated points to the Furstenberg system of a function $f$, and recover a wasteful system, making it obviously non-unique.

The situation becomes more interesting when considering 
generalised Furstenberg systems, namely when the 
function $F$ witnessing the behaviour of $f$ is not 
required to separate distinct
orbits. We show that the Furstenberg system can be 
recovered as a factor of any generalised system. 
Moreover, there exists a maximal Furstenberg system 
which contains all generalised Furstenberg systems as 
its factors. This is the the content of the second part of Theorem \ref{FS are isomorphic intro}.

\begin{theorem} \label{maximalFS}
Let $G$ be a semigroup and $f:G\to \C$ be a bounded function. Let $(X,(T_g))$ be a generalised 
Furstenberg system of $f$, and let $(\beta G, (\sigma_g))$ be the system defined in 
Section \ref{section:Gbeta}. Then, the Furstenberg system of $f$ is a factor 
of $(X,(T_g))$, which in turn is a factor of $(\beta G, (\sigma_g))$. In this sense, we can call $(\beta G, (\sigma_g))$ the \emph{maximal Furstenberg system}.
\end{theorem}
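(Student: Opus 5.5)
Both factor maps will be built explicitly. For the first, the Furstenberg system is realised as $(X_{R,f},(R_g))$, and we reuse the map from the proof of Theorem~\ref{thm:Uniqueness}. Let $(X,(T_g))$ be a generalised Furstenberg system of $f$, with transitive point $a$ and $F\in C(X)$ satisfying $F(T_ga)=f(g)$. Define
\[
\phi_R:X\to X_{R,f},\qquad \phi_R(x)=(F(T_gx))_{g\in G}.
\]
The proof of Theorem~\ref{thm:Uniqueness} used the separation condition only to obtain injectivity. The remaining steps transfer verbatim to the generalised case:
\begin{itemize}
\item $\phi_R(X)\subset X_{R,f}$, by transitivity of $a$ and the cylinder argument;
\item $\phi_R$ is continuous;
\item $\phi_R\circ T_h=R_h\circ\phi_R$, by \eqref{eq:2};
\item $\phi_R$ is surjective, since $\phi_R(X)$ is closed and contains the transitive point $\phi_R(a)=f$.
\end{itemize}
Hence $\phi_R$ is a factor map, and by Theorem~\ref{thm:Uniqueness} any Furstenberg system of $f$ is a factor of $(X,(T_g))$.

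For the second map, define $\psi:\beta G\to X$ by
\[
\psi(p)=p\text{-}\lim_{g\in G}T_ga,
\]
which is the continuous extension $\beta\iota$ of the orbit map $\iota:G\to X$, $g\mapsto T_ga$. This extension exists because $X$ is compact Hausdorff. Along the way we will also justify the claim $\beta f(q)=q\text{-}\lim f(g)$. The function $q\mapsto q\text{-}\lim f$ is continuous on $\beta G$: if $q\text{-}\lim f=x$ and $U\ni x$, choose $V$ with $\overline V\subset U$ and $A=f^{-1}(V)\in q$; then every $p\in\overline A$ has its limit in $\overline V$. This function agrees with $f$ on principal ultrafilters, so uniqueness of continuous extensions gives the claim. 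The same argument shows $\psi$ is continuous.

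Surjectivity of $\psi$ follows since $\psi(\beta G)$ is compact, hence closed, and contains $\{T_ga:g\in G\}$, which is dense. The main obstacle is equivariance, $\psi(\sigma_h p)=T_h\psi(p)$, where $\sigma_h(p)$ is the left multiplication $h\cdot p$. This is the convention compatible with $\sigma_g(e)=g$ and with the right-shift formula $R_gR_h=R_{gh}$.

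We prove equivariance in two steps.
\begin{itemize}
\item First, $\psi(h\cdot p)=p\text{-}\lim_g T_{hg}a$. Indeed, $A\in h\cdot p$ if and only if $h^{-1}A=\{g:hg\in A\}\in p$. So for an open $U$, $\{g:T_{hg}a\in U\}\in p$ exactly when $\{k:T_ka\in U\}\in h\cdot p$.
\item Second, continuity of $T_h$ lets us pull it out of the limit: $p\text{-}\lim_g T_h(T_ga)=T_h(p\text{-}\lim_g T_ga)=T_h\psi(p)$.
\end{itemize}
Since $T_{hg}=T_hT_g$, combining the two gives $\psi\circ\sigma_h=T_h\circ\psi$. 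Alternatively, both $\psi\circ\sigma_h$ and $T_h\circ\psi$ are continuous, using that $\sigma_h$ is continuous for principal $h$, and they agree on the dense set $G$, since both send $g$ to $T_{hg}a$; hence they coincide. This also shows that $\beta f=F\circ\psi$, recovering $(\beta G,(\sigma_g))$ as a generalised Furstenberg system mapping onto $X$. Composing $\psi$ with $\phi_R$ gives the full chain of factors.
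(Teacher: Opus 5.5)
Your proof is correct and takes essentially the same route as the paper. The first factor map is $x\mapsto (F(T_gx))_{g\in G}$, reusing the uniqueness proof with injectivity dropped. The second is $p\mapsto p\text{-}\lim_{g} T_ga$, with continuity from regularity of $X$, surjectivity from compactness and density, and equivariance from a direct ultrafilter computation.

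Your explicit choice of $\sigma_h$ as left multiplication $p\mapsto h\cdot p$ is the right one for equivariance and for $(\sigma_g)$ to be an action. The paper's text is inconsistent on this point: it defines $\sigma_p(q)=q\cdot p$ and writes $\beta T(qh)$, but then computes with $hq$.
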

\begin{proof}
As $(X,(T_g))$ is a generalised Furstenberg system, there is a continuous function $F\in C(X)$ and a transitive point $a\in X$, such that $F(T_ga)=f(g)$, for all $g\in G$. 
To prove the first part of the theorem, by Theorem~\ref{thm:Uniqueness}, it suffices to show that the symbolic system $(X_{R,f}, (R_g))$ is a factor of $(X, (T_g))$, since any two isomorphic systems have the same extensions. To this end we consider the (a posteriori factor) map 
$$\phi_R: x \mapsto (F(T_gx))_{g\in G}$$
from $(X, (T_g)_{g\in G})$ to $(X_{R,f}, (R_g)_{g\in G})$. To see this is indeed a factor map, we revisit the proof of Theorem~\ref{thm:Uniqueness}, noting that the map is no longer injective in general, since the function $F$ does not necessarily distinguish orbits. Nevertheless, surjectivity, continuity, and commutation of the dynamics of $(T_g)$ and $(R_g)$ follow in exactly the same manner.

Now, we show that any generalised Furstenberg system 
$(X, (T_g))$ is a factor of $(\beta G, (\sigma_g))$. 

Let $a \in X$ and $F \in C(X)$ be as before.  
Since $a$ is $(T_g)$-transitive, $G$ can be densely 
embedded in $X$, and because $X$ is compact, it 
forms a compactification of $G$. Then, the 
continuous function $T:G \to X$ defined via $g \mapsto T_ga$ 
can be lifted continuously to $\beta T: \beta G \to X$ 
by the universal 
property of the Stone-\v{C}ech compactification. Note 
that one function 
(which is unique up to homeomorphisms) with 
this property is 
$\beta T_{\ell}(q)=q-\lim_{g\in G} T_ga$, 
for $q\in \beta G$. Indeed, let $U\subset X$ be any open neighborhood of $\beta T_{\ell}(q)$. As $X$ is compact Hausdorff, and thus, in particular, a regular space, we can find an open neighborhood $V\subset U$ of $\beta T_{\ell}(q)$ such that $\overline{V}\subset U$. By the definition of limits along ultrafilters, $A=\{g\in G: T_ga \in V\}\in q$. Hence, $\overline{A} \subset \beta T_{\ell}^{-1}(\overline{V}) \subset \beta T_{\ell}^{-1}(U)$, showing that $\beta T_{\ell}$ is continuous. Also, clearly, for a principal ultrafilter $g\in G$, we have $\beta T_{\ell}(g)=T_ga=T(g)$.

So, we assume $\beta T=\beta T_{\ell}$. Recall the
definition of the ultrafilter Furstenberg system from 
Section \ref{section:Gbeta}. We will show that the map 
$\beta T: (\beta G, (\sigma_g))\to (X,(T_g))$ defined above is a factor map. First, $\beta T$ is continuous as explained already. Next, observe that $\beta T$ is surjective because 
$$\beta T(G)=\{T(g):g\in G\}=\{T_ga:g\in G\},$$ 
is dense in $X$, $\beta T$ is continuous, $\beta G$ is compact and $X$ is 
Hausdorff. Finally, let $q\in \beta G$ and $h\in G$ be any. Then,
$$\beta T(\sigma_hq)=\beta T(qh)=T_h\beta T(q),$$
where we have used that for any $h\in G$ and the principal ultrafilter $h\in \beta G$, 
$$T_h \left(q-\lim_{g\in G} T_ga\right)=q-\lim_{g\in G} T_{hg}a=hq-\lim_{g\in G} T_ga.$$ 
\end{proof}
We emphasise that generalised Furstenberg systems associated with the same function
need not be isomorphic. In particular, there exist 
functions with a generalised Furstenberg system that is 
genuinely distinct from (i.e. not isomorphic to) their 
Furstenberg system and from their maximal Furstenberg 
system. 
Indeed, if $f$ is periodic and $G = \mathbb{Z}$, the associated Furstenberg system
is finite. A generalised Furstenberg system can then be obtained by taking the
direct product of the (unique) Furstenberg system with an irrational rotation,
yielding an infinite system. In contrast, the maximal system is non-metrizable,
and therefore these notions cannot coincide. The following diagram illustrates how the maximal system controls, through factor maps,
all Furstenberg systems, while the (generalised) Furstenberg system depends on the
specific function.
\begin{center}
\begin{tikzpicture}[
  node distance=1cm,
  every node/.style={draw=none},
  arrow/.style={->, thick}
]

\node (top) {$\bigl(\beta G, (\sigma_g)_{g \in G}\bigr)$};

\node (leftmid) [below left=of top] 
  {$\bigl(\widetilde{X}_{f_1}, (\widetilde{T}^1_g)_{g \in G}\bigr)$};

\node (rightmid) [below right=of top] 
  {$\bigl(\widetilde{X}_{f_2}, (\widetilde{T}^2_g)_{g \in G}\bigr)$};

\node (leftbot) [below=of leftmid] 
  {$\bigl(X_{f_1}, (T^1_g)_{g \in G}\bigr)$};

\node (rightbot) [below=of rightmid] 
  {$\bigl(X_{f_2}, (T^2_g)_{g \in G}\bigr)$};

\draw[arrow] (top) -- (leftmid);
\draw[arrow] (top) -- (rightmid);
\draw[arrow] (leftmid) -- (leftbot);
\draw[arrow] (rightmid) -- (rightbot);

\end{tikzpicture}
    
\end{center}
\subsection{Joint Furstenberg systems of multiple functions}\label{joint fs section}

In analogy with Definition \ref{FS_definition}, in 
order to find a dynamical 
model for the statistics of multiple functions simultanesouly, we can consider the 
\textit{joint Furstenberg system} of a finite set of bounded functions over a semigroup.

\begin{definition}\label{jointFS}
Let $G$ be a semigroup and let $f_1,\ldots,f_{\ell}: G\to \C$ be bounded. A 
dynamical $G$-system $(X,(T_g)_{g\in G})$ is called a joint Furstenberg 
system of $f_1,\ldots,f_{\ell}$ if 
\begin{enumerate}[(i)]
    \item \label{cond joint_i} there exist functions $F_1,\ldots,F_{\ell} \in C(X)$ and a transitive point $a\in X$ such that $F_i(T_ga)=f_i(g)$, for all $g\in G$, $i\in \{1,\ldots,\ell\}$ and
    \item \label{cond joint_iii} for any $x\neq y$ in $X$, there exists $g\in G$ and $i\in \{1,\ldots,\ell\}$ such that $F_i(T_gx)\neq F_i(T_gy)$.
\end{enumerate}
Note that condition (\ref{cond joint_iii}) above can be replaced by 
\begin{enumerate}[(iii)] 
    \item \label{cond joint_ii} $C(X)$ is the smallest $(T_g)$-invariant $C^{*}$-algebra generated by $(F_1,\ldots,F_{\ell})$.
\end{enumerate}
We also consider a \textit{generalised Furstenberg system} of 
$f_1,\ldots,f_{\ell}:G\to \C$, by removing condition~(\ref{cond joint_iii}) above. 
\end{definition}

Note that, in Definition \ref{jointFS}, we may consider 
a multi-valued function $F: X\to \C^{\ell}$ defined via 
$F(x)=(F_1(x),\ldots,F_{\ell}(x))$, and rewrite condition~(\ref{cond joint_iii}) as the 
property that the $G$-orbit of $F$ separates points.

As the prototypical example, we consider the symbolic right shift joint
Furstenberg system of a finite family of functions. If $G$ is a semigroup, and $f_1,\ldots,f_{\ell}:G\to \C$ are bounded, then we consider $f\in (\C^{\ell})^{G}$ defined via $f(g)=(f_1(g),\ldots,f_{\ell}(g))$ and in analogy with the symbolic system constructed in Section \ref{sec:2.1}, we consider $(X_{R,f},(R_g))$ to be the orbit closure of $f$ under $(R_g)$, only this time, the phase space is a subspace of $(\C^{\ell})^{G}$. With the continuous function $F: X_{R,f}\to C^{\ell}$ defined by $F(x)=(x_1(0),\ldots,x_{\ell}(0))$, we recover a joint Furstenberg system of $f_1,\ldots,f_{\ell}$. In an analogous manner, the maximal joint Furstenberg system of $f_1,\ldots,f_{\ell}$ can be defined using the Stone-\v{C}ech compactification of $G$, $(\beta G, (\sigma_g))$.

As we drew attention to in the beginning of this section, nothing special 
about the structure of the image of single functions $f:G\to \C$ was used 
in the proofs of the Section \ref{sec:3.1}, other than the fact that
$\C$ is
Hausdorff and that $\overline{f(G)}$ is compact, since $f$ was assumed bounded. The same assumptions hold when considering $f=(f_1,\ldots,f_{\ell})$. Namely, $f(G)\subset \C^{\ell}$ is a pre-compact set of a Hausdorff space, and thus all the results proved for a single function extend in the case of finite tuples of functions.  

\begin{theorem}\label{joint FS results}
Let $G$ be a semigroup and $f_1,\ldots,f_{\ell}:G\to \C$ be bounded. If $(X,(T_g))$ is a 
(generalised) joint Furstenberg system of $f_1,\ldots,f_{\ell}$ 
and $(Y,(S_g))$ is topologically 
isomorphic to $(X,(T_g))$, then $(Y,(S_g))$ 
is also a (generalised) joint 
Furstenberg system of $f_1,\ldots,f_{\ell}$. At the same time, any two 
joint Furstenberg systems of $f_1,\ldots,f_{\ell}$
are topologically isomorphic, and they are factors of any generalised joint 
Furstenberg system of $f_1,\ldots,f_{\ell}$, which in turn is a factor 
$(\beta G, (\sigma_g))$.   
\end{theorem}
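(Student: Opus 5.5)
The plan is to reduce everything to the single-function case by bundling the tuple into one vector-valued function. Set $f=(f_1,\ldots,f_{\ell}):G\to\C^{\ell}$ and $F=(F_1,\ldots,F_{\ell}):X\to\C^{\ell}$. Then $K:=\overline{f(G)}$ is a compact subset of the Hausdorff space $\C^{\ell}$. Conditions~(\ref{cond joint_i}) and~(\ref{cond joint_iii}) of Definition~\ref{jointFS} become exactly conditions~(\ref{cond i}) and~(\ref{cond iii}) of Definition~\ref{FS_definition} for this vector-valued pair $(f,F)$. I then re-run the proofs of Section~\ref{sec:3.1} with $\C$ replaced by $\C^{\ell}$, checking at each step that only compactness of $K$ and the Hausdorff property of the target were used.

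\emph{Isomorphism invariance.} Let $\psi:X\to Y$ be a topological isomorphism. Put $F_i':=F_i\circ\psi^{-1}\in C(Y)$ and $a':=\psi(a)$. Since $\psi$ is a homeomorphism intertwining the actions, $a'$ is $(S_g)$-transitive. Moreover $F_i'(S_ga')=F_i(\psi^{-1}S_g\psi a)=F_i(T_ga)=f_i(g)$. If $y\neq y'$, then $\psi^{-1}y\neq\psi^{-1}y'$, so some $g,i$ satisfy $F_i(T_g\psi^{-1}y)\neq F_i(T_g\psi^{-1}y')$. This says $F_i'(S_gy)\neq F_i'(S_gy')$, so the separation condition transfers as well.

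\emph{Uniqueness.} Define $\phi_R:X\to(\C^{\ell})^G$ by $\phi_R(x)=(F(T_gx))_{g\in G}$. I then repeat the proof of Theorem~\ref{thm:Uniqueness} verbatim, with cylinder sets now built from open subsets of $K\subset\C^{\ell}$.
\begin{itemize}
\item The image lies in $X_{R,f}$, by transitivity of $a$ and continuity of the $T_{g_i}$ and of $F$.
\item $\phi_R$ is continuous, since each coordinate $x\mapsto F(T_gx)$ is continuous.
\item $\phi_R\circ T_h=R_h\circ\phi_R$, by the same computation as~\eqref{eq:2}.
\item $\phi_R$ is injective, by~(\ref{cond joint_iii}).
\item $\phi_R$ is surjective, because its image is compact (hence closed) and contains the transitive point $f$.
\end{itemize}
A continuous bijection from a compact space to a Hausdorff space is a homeomorphism. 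Hence every joint Furstenberg system is isomorphic to $(X_{R,f},(R_g))$, and therefore any two joint Furstenberg systems are isomorphic to each other.

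\emph{Factor statements.} For a generalised joint system, the same map $\phi_R$ is still a continuous, equivariant surjection onto $X_{R,f}$; only injectivity is lost. So the joint Furstenberg system is a factor of it, and by the isomorphism part, so is every joint Furstenberg system. For the final factor map from $(\beta G,(\sigma_g))$, I use the second half of the proof of Theorem~\ref{maximalFS} unchanged, with $\beta T(q)=q\text{-}\lim_{g}T_ga$. That argument never involves the functions $F_i$; it uses only the transitive point $a$ and the compact Hausdorff space $X$.

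I expect no real obstacle here. The only point needing care is confirming that the ultrafilter limits and the Stone--Čech extension in Theorem~\ref{maximalFS} require only a compact Hausdorff target, which they do, and that the symbolic space $K^G$ with $K\subset\C^{\ell}$ is compact Hausdorff, which follows from Tychonoff's theorem.
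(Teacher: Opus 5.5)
Your proposal is correct and is essentially the paper's argument. You bundle $f_1,\ldots,f_\ell$ into one map $G\to\C^{\ell}$ with precompact image and rerun Section~\ref{sec:3.1}, which uses only compactness of $\overline{f(G)}$ and Hausdorffness of the target, and you also write out the isomorphism-invariance check that the paper leaves to the reader.

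One caveat you inherit from Theorem~\ref{maximalFS}: the identity $T_h\bigl(q\text{-}\lim_{g} T_ga\bigr)=q\text{-}\lim_{g}T_{hg}a$ used there shows that $\beta T$ intertwines $T_h$ with left multiplication $q\mapsto h\cdot q$. So for non-commutative $G$ the action on $\beta G$ should be $q\mapsto h\cdot q$ rather than $q\mapsto q\cdot h$; the latter is only an anti-action.
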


\begin{remark*}
As the above discussion suggests, all these results are 
special cases of,
but share the exact same proof with, the obvious 
generalisation for functions $f:G\to K$, where $K$ is a 
compact Hausdorff space. 
\end{remark*}

\subsection{Isomorphic sets}\label{isomorphic subsection} 

We now restrict attention to the group setting. 
For any $A \subset G$, where $G$ is a group, we can identify $A$ with 
$a := (\mathbbm{1}_A(g))_{g \in G}$.  
Under this identification, we may consider the Furstenberg system of $A$. Our main goal for this section, is to understand when two subsets $A, B \subset G$ are \emph{isomorphic sets}, meaning that their Furstenberg systems are topologically isomorphic.

A first simple observation is that a set $A \subset G$ is
always isomorphic to its complement, $G \setminus A$, via 
an isomorphism of their symbolic Furstenberg systems 
swapping $0$'s with $1$'s. More generally, it is not too
difficult to prove that, if $B$ is a right or 
left shift of $A$ (or its complement), then $A$ is 
isomorphic to $B$. More precisely, assume there exists $h\in G$ and a permutation $\sigma:\{0,1\}\to \{0,1\}$, such that $\mathbbm{1}_{B}(g)=\sigma(h_x \mathbbm{1}_{A}(g))$, for any $g\in G$, where $h_x\in \{R_h,L_h\}$. Then, $A$ and $B$ are isomorphic.

Although this sufficient condition is a natural one, it 
does not 
capture all the possibilities for when two sets are 
isomorphic. For
example, let $(X,T)=(X,(T_n)_{n\in\Z})$ be an infinite 
and minimal subsystem of $(\{0,1\}^{\Z},T)$, where $T$ is 
the right shift. Such a system is generated by a 
non-periodic, uniformly recurrent element of 
$\{0,1\}^{\Z}$. Then, if $a\in X$ is any element and 
$b\in X\setminus \{T^na:n\in \Z\}=\overline{\{T^na:n\in \Z\}}\setminus \{T^na:n\in \Z\}$, we see that $(X,T)$ 
is a Furstenberg system of both $a$ and $b$. In particular, if we let $A, B \subset \mathbb{Z}$ be the sets corresponding to the sequences $a$ and $b$, then $A$ and $B$ are isomorphic. However, there exists no shift 
$m \in \mathbb{Z}$ and no permutation 
$\sigma: \{0,1\} \to \{0,1\}$ such that $a = \sigma \circ T^m b$. In other words, neither $A$ nor $G \setminus A$ 
is a shift of $B$.  Notice, however, that in this example 
we have 
$b \in \overline{\{T^n a : n \in \mathbb{Z}\}}$ 
and 
$a \in \overline{\{T^n b : n \in \mathbb{Z}\}}$.

Given $x\in \{0,1\}^G$, a permutation $\sigma:\{0,1\}\to \{0,1\}$ and a set $Y\subset \{0,1\}^G$, we write $x\in_{\sigma} Y$, if there exists some $y\in Y$ such that $x(g)=\sigma(y(g))$, for all $g\in G$. 
The previous observation naturally leads to the 
following, more general, sufficient condition. 
\begin{proposition} \label{limit case of isomorphic systems sufficient}
Let $G$ be a group, $A,B\subset G$ and let $a=\mathbbm{1}_A$, $b=\mathbbm{1}_B$ in $\{0,1\}^G$. If there exists a
permutation $\sigma:\{0,1\}\to \{0,1\}$ such that 
$b\in_{\sigma} \overline{\{R_ga:g\in G\}}$ and 
$a\in_{\sigma} \overline{\{R_gb:g\in G\}}$, then $A$ 
and $B$ are isomorphic.     
\end{proposition}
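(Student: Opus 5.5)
The plan is to show that the orbit closures coincide after applying $\sigma$ coordinatewise. Then $X_{R,b}$ is the image of $X_{R,a}$ under the coordinatewise permutation map, which is a topological isomorphism, and Theorem~\ref{thm:Uniqueness} finishes the argument.

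Let $\Sigma:\{0,1\}^G\to\{0,1\}^G$ be given by $\Sigma(x)(g)=\sigma(x(g))$. Since $\sigma$ is a permutation of $\{0,1\}$, we have $\sigma^{-1}=\sigma$ (it is either the identity or the swap), so $\Sigma$ is an involutive homeomorphism; it is continuous because preimages of cylinder sets are cylinder sets. It also commutes with every right shift, since $\Sigma(R_hx)(g)=\sigma(x(gh))=R_h\Sigma(x)(g)$. Write $X_a=\overline{\{R_ga:g\in G\}}$ and $X_b=\overline{\{R_gb:g\in G\}}$.

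The first step is to rewrite the hypothesis. The condition $b\in_\sigma X_a$ means $b=\Sigma(y)$ for some $y\in X_a$. Equivalently, $\Sigma(b)=y\in X_a$, and since $X_a$ is closed and invariant, $R_g\Sigma(b)=\Sigma(R_gb)\in X_a$ for all $g$. Taking closures and using continuity of $\Sigma$ gives $\Sigma(X_b)\subset X_a$. Symmetrically, $a\in_\sigma X_b$ gives $\Sigma(a)\in X_b$, hence $\Sigma(X_a)\subset X_b$. Applying $\Sigma$ to this and using $\Sigma^2=\mathrm{id}$ gives $X_a\subset\Sigma(X_b)$. Therefore $\Sigma(X_b)=X_a$.

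It follows that $\Sigma|_{X_b}:X_b\to X_a$ is a homeomorphism that intertwines $(R_g)$ on both sides, so $(X_{R,a},(R_g))$ and $(X_{R,b},(R_g))$ are topologically isomorphic. By Theorem~\ref{thm:Uniqueness}, $X_{R,a}$ and $X_{R,b}$ represent the Furstenberg systems of $A$ and $B$ respectively, so $A$ and $B$ are isomorphic. There is no real obstacle here. The only points needing care are that $\Sigma$ is an involution, which holds because permutations of a two-element set are involutions, and that the transitive point of $X_b$ need not map to $a$; isomorphism of the systems does not require this.
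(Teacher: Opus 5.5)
Your proof is correct, but it is organised differently from the paper's. The paper does not build the conjugacy by hand. It takes $x=\Sigma(b)\in X_{R,a}$ and argues that $x$ is transitive; this is exactly your inclusion $X_a\subset\Sigma(X_b)$, which comes from the hypothesis $\Sigma(a)\in X_b$. It then notes that $\sigma\circ F_e$ is continuous, that its $(R_g)$-orbit separates points, and that $(\sigma\circ F_e)(R_g x)=b(g)$. From this it concludes that $(X_{R,a},(R_g))$ is a Furstenberg system of $b$, and Theorem~\ref{thm:Uniqueness} supplies the isomorphism. Unwinding that theorem, the map it produces is $\phi_R(y)=(\sigma(y(g)))_{g\in G}$, which is precisely your $\Sigma$ restricted to $X_a$. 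So the two arguments rest on the same fact.

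Your version is more self-contained and exhibits the conjugacy explicitly. It uses the uniqueness theorem only to make ``the Furstenberg system of $A$'' well defined. The paper's packaging, a transitive point together with an observable realising $b$ inside $X_{R,a}$, is the template it later generalises in Theorem~\ref{characterising isomorphic sets}, replacing $\sigma\circ F_e$ by a generalised projection.
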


\begin{remark*}
The result states that if each point is in the orbit 
closure of the other one (up to changing $0$'s with $1$'s, i.e. taking complements), then they must be isomorphic. A more combinatorial reformulation of this result was stated in Proposition \ref{isomorphic sets intro}.
\end{remark*}

\begin{proof} By Theorem \ref{thm:Uniqueness}, it suffices to show that $(X_{R,a}, (R_g))$ is also a Furstenberg system of $b$. Thus, we aim to find a continuous function $F$, whose orbit separates points, and a transitive point $x\in X_{R,a}$, such that $F(R_gx)=b(g)$, for all $g\in G$. By assumption, $b\in_{\sigma} X_{R,a}$, so there exists $x\in \overline{\{R_ga:g\in G\}}$ satisfying $(\sigma(x(g)))_{g\in G}=b$. Moreover, $a\in_{\sigma} \overline{\{R_gb:g\in G\}}$, and thus $x$ is an $(R_g)$-transitive point in $X_{R,a}$. To see this note (the obvious fact) that $(x(g))_{g\in G}=(\sigma^2(x(g)))_{g\in G}=(\sigma(b(g)))_{g\in G}$.

We now simply consider $F$ to be the projection onto the 
coordinate at the identity, $F_e:y\mapsto y(e)$, 
composed with $\sigma$, which is continuous and its orbit separates points. It then
follows that 
$$F(R_gx)=\sigma\circ F_e(R_gx)=\sigma(x(g))=b(g),$$ for all $g\in G$.
\end{proof}

Unfortunately -- at least from an aesthetic point of view -- this still leaves some cases unaccounted for. 
We first present an example demonstrating this, and then 
proceed to state and prove general necessary and 
sufficient conditions.

\begin{proposition}\label{example of isomorphic points}
In $(\{0,1\}^{\mathbb{Z}}, T)$, where $T((x(n))) = (x(n+1))$, let $a,b$ be defined by
\[
a(n) = 1 \iff n \in (2\mathbb{N} + 1) \cup \{0\}, 
\qquad
b(n) = 1 \iff n \in 2\mathbb{N}.
\]
Then $a$ and $b$ are isomorphic, but 
\[
a \notin \overline{\{T^n b : n \in \mathbb{Z}\}}
\qquad \text{and} \qquad 
b \notin \overline{\{T^n a : n \in \mathbb{Z}\}}.
\]
\end{proposition}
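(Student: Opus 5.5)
The plan is to compute both orbit closures explicitly, write down an explicit conjugacy between them, and then separate the two points by a finite pattern. By Theorem~\ref{thm:Uniqueness}, the Furstenberg systems of $a$ and $b$ may be taken to be the orbit closures $X_a=\overline{\{T^na:n\in\Z\}}$ and $X_b=\overline{\{T^nb:n\in\Z\}}$. So it suffices to describe these two closures, build a conjugacy $\phi:X_a\to X_b$ by hand, and check the two non-membership claims directly.

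\textbf{Step 1: the orbit closures.} Let $p$ be the alternating sequence with $p(n)=1\iff n$ odd, and let $\mathbf 0$ be the zero sequence. Then $T^na\to\mathbf 0$ as $n\to-\infty$, because the window around the origin falls into the negative part, where $a$ vanishes. Likewise $T^{2k}a\to p$ and $T^{2k+1}a\to Tp$ as $k\to+\infty$. Any convergent sequence $T^{n_j}a$ either has $n_j$ eventually constant or $n_j\to\pm\infty$, so
$$X_a=\{T^na:n\in\Z\}\cup\{\mathbf 0,p,Tp\}.$$
The same argument gives
$$X_b=\{T^nb:n\in\Z\}\cup\{\mathbf 0,p',Tp'\},\qquad p':=\lim_{k\to\infty}T^{2k}b\in\{p,Tp\}.$$
Each point $T^na$ and $T^nb$ is isolated, since it is the only point of its closure showing the "defect" near position $-n$.

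\textbf{Step 2: the conjugacy.} Define
$$\phi(T^na)=T^nb,\qquad \phi(\mathbf 0)=\mathbf 0,\qquad \phi(p)=p',\qquad \phi(Tp)=Tp'.$$
This map is bijective, because the orbits of $a$ and $b$ are infinite and disjoint from the three limit points. It commutes with $T$, using $T^2p=p$ and $T^2p'=p'$. The only real care is continuity at the limit points, which is why $p'$ is chosen to match the parity of $T^{2k}a\to p$. Continuity at isolated points is automatic. At the limit points, a sequence converging to $\mathbf 0$, $p$ or $Tp$ is (up to finitely many terms equal to the limit) of the form $T^{n_j}a$ with $n_j\to-\infty$, or with $n_j\to+\infty$ along a fixed parity class. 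Its image $T^{n_j}b$ therefore converges to $\mathbf 0$, $p'$ or $Tp'$ respectively. Since $\phi$ is a continuous bijection from a compact space to a Hausdorff space, it is a homeomorphism, and so $a$ and $b$ are isomorphic.

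\textbf{Step 3: non-membership.} Here I would use finite patterns. First, $a(0)=a(1)=1$, while no point of $X_b$ contains two consecutive $1$'s: neither $b$, nor its shifts, nor $\mathbf 0,p,Tp$ do. Hence $a\notin X_b$. Second, $b$ contains the pattern "$0,0,1$" (around $n=-1,0,1$ or $0,1,2$, depending on whether $0\in2\N$), i.e.\ two consecutive $0$'s followed later by a $1$. In $a$, every occurrence of two consecutive $0$'s lies in the negative part, where $a$ has no $1$'s to the left of position $0$ other than the block beginning "$1,1$". So any shift of $a$ containing this pattern, read to the right, must contain "$1,1$", whereas $b$ never does. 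Also, $b$ is none of $\mathbf 0,p,Tp$. Hence $b\notin X_a$.

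The main obstacle I expect is Step 2, specifically making the parity bookkeeping of $p'$ consistent so that $\phi$ is continuous, together with stating the argument cleanly regardless of whether $0\in2\N$. Step 3 is only a case check on local patterns.
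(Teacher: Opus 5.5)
Your argument is correct, and it reaches the isomorphism by a genuinely different route from the paper.

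\textbf{The paper's route.} The paper never describes $X_a$ and never writes down a conjugacy. Instead it shows that $(X_b,T)$ is itself a Furstenberg system of $a$:
\begin{enumerate}[(i)]
\item it uses the local rule $F(x)=x(0)+x(1)+x(2)\pmod{2}$, a generalised projection in the sense of Theorem~\ref{characterising isomorphic sets}, to read off $a$ along the orbit of a transitive point of $X_b$;
\item it checks that the $T$-orbit of $F$ separates the points of $X_b$;
\item it then appeals to Theorem~\ref{thm:Uniqueness}.
\end{enumerate}

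\textbf{Your route.} You compute both orbit closures: an orbit of isolated points together with the fixed point $\mathbf 0$ and a $2$-cycle. You then match them by hand, choosing $p'$ by parity so that $\phi$ is continuous.

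\textbf{What each buys.} The paper's method is the one that generalises. It is exactly the mechanism behind the sufficiency half of Theorem~\ref{characterising isomorphic sets}, and it needs no knowledge of $X_a$. Yours is more elementary and self-contained, and it has two side benefits.
\begin{itemize}
\item It avoids the index bookkeeping for $F$. The identity $a(0)=a(1)=1$ forces $0\in\N$, and then one actually has $F(T^nb)=b(n)+b(n+1)+b(n+2)\equiv a(n+2)$. So the transitive point realising $a$ with the paper's $F$ is $T^{-2}b$, not $b$.
\item Your explicit description of $X_a$ is what genuinely proves $b\notin\overline{\{T^na:n\in\Z\}}$. The paper's one-line justification (that $b$ has no two consecutive $1$'s while $a(0)=a(1)=1$) directly rules out only $a\in X_b$. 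The set $X_a$ also contains points without the pattern $1,1$, namely $\mathbf 0$, $p$ and $Tp$.
\end{itemize}

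\textbf{Small things to tidy.}
\begin{itemize}
\item Once you use $a(0)=a(1)=1$ you have $0\in\N$, so there is no need to hedge. The pattern $0,0,1$ then sits at positions $-2,-1,0$ of $b$, not $-1,0,1$.
\item The pattern argument in Step~3 is unnecessary anyway. Every $T^na$ has $1,1$ at positions $-n,-n+1$, $b$ has no two consecutive $1$'s, and $b\notin\{\mathbf 0,p,Tp\}$.
\item In Step~2, write ``apart from terms equal to the limit'' rather than ``up to finitely many terms equal to the limit''.
\item Also in Step~2, note explicitly that sequential continuity suffices because $\{0,1\}^{\Z}$ is metrizable.
\end{itemize}
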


\begin{proof}
The last claim is clear because no two consecutive $1$'s appear in $b$, whereas $a(0)=a(1)=1$. To prove that the 
the two points, and hence the sets corresponding to 
their indicators, have isomorphic Furstenberg systems, we 
show that $(X_b,T)$, 
where $X_b=\overline{\{T^nb:n\in \Z\}},$ is a Furstenberg 
system of $a$. Observe that this suffices because $(X_b,T)$ is the (symbolic) Furstenberg system of $b$.

We consider $F:x\mapsto x(0)+x(1)+x(2) \pmod{2}$ on $X_b$. It is 
clear that $F$ is continuous. Moreover, 
$$F(T^nb)=b(n)+b(n+1)+b(n+2) \pmod{2} = 1 \iff n\in \{0,1,3,5,\ldots\},$$
so that $F(T^nb)=a(n)$, for every $n\in \Z$. Since $b$ is transitive 
by definition, we are left with showing that the orbit of $F$ 
separates points in $X_b$. To this end, observe that by 
the definition of $b$, $X_b$ will 
consist of the $0$ 
constant sequence, the two bi-infinite sequences of 
consecutive 
blocks of $01$ and all points of $x_k,y_k$ defined via 
$$x_k(n)=1 \iff n\geq k\ \text{and}\ n\in 2\Z,$$ and
$$y_k(n)=1 \iff n\geq k\ \text{and}\ n\in (2\Z+1),$$
where $k\in \Z$. It is then easy to check that the orbit 
of $F$ separates these points.  
\end{proof}

To account for this type of isomorphic points we
introduce some useful 
notation. Given a function $F: \{0,1\}^G\to \C$, and $a,b\in \{0,1\}^G$, we write that $a\in_{F} \overline{\{R_gb:g\in G\}}$ if there exists $x\in \overline{\{R_gb:g\in G\}}$ such that $F(R_gx)=a(g)$, for every $g\in G$, which we will denote by $a=_{F}x$. Notice that for $F$ being the projection onto the coordinate at the identity, the above simply means that $a\in \overline{\{R_gb:g\in G\}}$. We further say that $F: \{0,1\}^G\to \C$ \textit{depends on finitely many parameters} if there exist $g_1,\ldots,g_k$ such that for any $x,y\in \{0,1\}^G$ with $x(g_i)=y(g_i)$, for all $i=1,\ldots,k$, it holds that $F(x)=F(y)$. If $F:\overline{\{R_gb:g\in G\}} \to \C$ is continuous, depends on finitely many parameters and its $G$-orbit separates points, then we call $F$ a \textit{generalised projection on $\overline{\{R_gb:g\in G\}}$}. In particular, observe that the function $F:x\mapsto x(0)+x(1)+x(2) \pmod{2}$ used in the proof of Proposition \ref{example of isomorphic points} is a generalised projection on $X_b=\overline{\{T^nb:n\in \Z\}}$, with $b$ as in the same proposition.

We can give necessary and sufficient conditions for 
when two subsets of a group are isomorphic via the 
following generalisation of Proposition 
\ref{limit case of isomorphic systems sufficient}. Before stating the result, we recall the following version of the classical Curtis--Hedlund--Lyndon theorem (see Theorem 6.2.9 in \cite{LindMarcus}). For completeness, we include a proof adapted to our setting. 

\begin{theorem}[Curtis-Hedlund-Lyndon]\label{CHL}
Let $G$ be a group, and let $f,f':G\to \mathcal{A}$ be finite-valued functions. Suppose $\phi$ is a factor map between the Furstenberg systems $(X_{R,f},(R_g)_g)$ and $(X_{R,f'},(R_g)_g)$. Then, there exists $g_1,\dots,g_k$ and a local code $\psi:\mathcal{A}^k\to \mathcal{A}$ such that
$$\phi(x)_h=\psi(x_{g_1h},\dots,x_{g_kh})$$
for all $x\in X_{R,f}$ and $h\in G$.
\end{theorem}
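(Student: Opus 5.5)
The plan is the standard compactness argument for Curtis--Hedlund--Lyndon, adapted to the right-shift convention. Put $\Phi(x):=\phi(x)_e$ for $x\in X_{R,f}$. Since $\phi$ is continuous and projection onto the $e$-coordinate is continuous, $\Phi:X_{R,f}\to\mathcal{A}$ is continuous. The first step is to recover every coordinate of $\phi(x)$ from $\Phi$. Equivariance gives $\phi(R_hx)=R_h\phi(x)$. The definition of the right shift gives $(R_hy)_e=y_{eh}=y_h$. Combining these,
$$\phi(x)_h=(R_h\phi(x))_e=\phi(R_hx)_e=\Phi(R_hx).$$
So it suffices to show that $\Phi$ depends on finitely many coordinates of its argument.

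The main step is that locality. The alphabet $\mathcal{A}$ is finite and carries the discrete topology, so for each symbol $s$ the fibre $P_s:=\Phi^{-1}(s)$ is clopen in $X_{R,f}$. Distinct symbols give disjoint fibres, and there are finitely many of them. Each $P_s$ is an open set, hence a union of cylinder sets (intersected with $X_{R,f}$). Each $P_s$ is also closed in a compact space, hence compact, so finitely many cylinders cover it. Let $g_1,\dots,g_k$ be the finitely many coordinates that appear in all of these cylinders, over all $s$. If $x,y\in X_{R,f}$ agree at $g_1,\dots,g_k$, then they lie in exactly the same cylinders from the cover. Since the fibres partition $X_{R,f}$, this forces $\Phi(x)=\Phi(y)$. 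This is the only step with real content, and finiteness of $\mathcal{A}$ is exactly what makes the fibres clopen.

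Now define the local code. For a word $w=(w_1,\dots,w_k)\in\mathcal{A}^k$ that is realised by some $x\in X_{R,f}$, meaning $x_{g_i}=w_i$ for all $i$, set $\psi(w):=\Phi(x)$. This is well defined by the previous paragraph. On words that are not realised, define $\psi$ arbitrarily, for instance as a fixed symbol.

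Finally, apply this to the shifted point $R_hx$. Its coordinates are $(R_hx)_{g_i}=x_{g_ih}$, so $\Phi(R_hx)=\psi(x_{g_1h},\dots,x_{g_kh})$, and by the first step this equals $\phi(x)_h$.

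One remark on the statement: it writes a single alphabet $\mathcal{A}$ for both $f$ and $f'$. If $f$ and $f'$ take values in different finite alphabets, the same argument goes through verbatim, with $\psi:\mathcal{A}^k\to\mathcal{A}'$.
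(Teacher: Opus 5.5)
Your proof is correct and follows the paper's argument: refine the preimages $\phi^{-1}(\{y : y_e = s\})$ into cylinders, use compactness to get a finite subcover, read off the coordinates $g_1,\dots,g_k$ and the code $\psi$, and transfer from the identity coordinate to $h$ via $\phi\circ R_h = R_h\circ\phi$. The only cosmetic differences are that you apply compactness to each clopen fibre rather than once to all of $X_{R,f}$, and your remark that a different target alphabet simply gives $\psi:\mathcal{A}^k\to\mathcal{A}'$ is also right.
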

\begin{proof}
Let $a \in \mathcal{A}$, and suppose the cylinder set 
\[
C(a) = \{y \in X_{R,f'} : y_{e_G} = a\}
\]
is non-empty. By continuity of $\phi$, the preimage $\phi^{-1}(C(a))$ 
is open, and hence can be written as a (possibly uncountable) union of 
cylinder sets $(C'(a,i))_{i \in I_a}$ in $X_{R,f}$. 

Repeating this for each $a \in \mathcal{A}$ yields an open cover 
\[
(C'(a,i))_{a \in \mathcal{A},\, i \in I_a}
\]
of $X_{R,f}$. Since $X_{R,f}$ is compact, we can extract a finite 
subcover $(C_i)_{i \in I}$. Each $C_i$ depends only on finitely many 
coordinates and is contained in exactly one set of the form 
$\phi^{-1}(C(a))$. 

It follows that there exist $g_1, \dots, g_k \in G$ such that whenever 
$x_{g_i} = y_{g_i}$ for all $i = 1, \dots, k$, we have 
\[
\phi(x)_{e_G} = \phi(y)_{e_G}.
\]
Consequently, there exists a function $\psi : \mathcal{A}^k \to \mathcal{A}$ 
such that
\[
\phi(x)_{e_G} = \psi(x_{g_1}, \dots, x_{g_k}).
\]
Finally, since $R_h \circ \phi = \phi \circ R_h$ for all $h \in G$, we obtain that
\[
\phi(x)_h 
= (R_h \phi(x))_{e_G} 
= (\phi(R_h x))_{e_G} 
= \psi(x_{g_1 h}, \dots, x_{g_k h}).
\]

\end{proof} 

\begin{theorem}\label{characterising isomorphic sets}
Let $G$ be a group and let $a,b\in \{0,1\}^G$. Then, $a,b$ are 
isomorphic if and only if there exists a generalised projection
$F:\overline{\{R_gb:g\in G\}} \to \C$ and a transitive point 
$x\in \overline{\{R_gb:g\in G\}}$, such that 
$a\in_F \overline{\{R_gb:g\in G\}}$ and $a=_F x$.    
\end{theorem}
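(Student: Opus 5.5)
The plan is to prove the two implications separately, with $X_b:=\overline{\{R_gb:g\in G\}}$ and $X_a:=\overline{\{R_ga:g\in G\}}$. By Theorem~\ref{thm:Uniqueness}, $(X_a,(R_g))$ and $(X_b,(R_g))$ are the Furstenberg systems of $a$ and $b$. So ``$a,b$ isomorphic'' means exactly that these two symbolic systems are topologically isomorphic.

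\emph{Sufficiency.} Suppose $F$ is a generalised projection on $X_b$ and $x\in X_b$ is transitive with $a=_F x$, i.e.\ $F(R_gx)=a(g)$ for all $g$. I would check directly that $(X_b,(R_g))$ satisfies Definition~\ref{FS_definition} for the function $a$: $F$ is continuous, $x$ is transitive, $F(R_gx)=a(g)$, and the $G$-orbit of $F$ separates points by the definition of a generalised projection. Hence $(X_b,(R_g))$ is a Furstenberg system of $a$. It is also the Furstenberg system of $b$, so Theorem~\ref{thm:Uniqueness} shows the two Furstenberg systems are isomorphic. This direction just repeats the proof of Proposition~\ref{limit case of isomorphic systems sufficient}.

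\emph{Necessity.} Let $\phi:X_b\to X_a$ be a topological isomorphism. I would set $F:=F_e\circ\phi$, where $F_e(y)=y(e)$, and $x:=\phi^{-1}(a)$. The required properties then follow in order:
\begin{enumerate}[(1)]
\item $F$ is continuous, as a composition of continuous maps.
\item $x$ is transitive, because $\phi^{-1}$ is a conjugacy and $a$ is transitive in $X_a$.
\item $F(R_gx)=\phi(R_gx)(e)=(R_g a)(e)=a(g)$, so $a=_F x$ and in particular $a\in_F X_b$.
\item The orbit of $F$ separates points: if $F(R_gy)=F(R_gz)$ for all $g$, then $\phi(y)(g)=\phi(z)(g)$ for all $g$. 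So $\phi(y)=\phi(z)$, and $y=z$ by injectivity.
\item $F$ depends on finitely many parameters. This is the key step, and it uses Theorem~\ref{CHL} applied to the factor map $\phi$ with $f=b$, $f'=a$ and $\mathcal{A}=\{0,1\}$. It gives $g_1,\dots,g_k$ and $\psi$ with $F(y)=\phi(y)_e=\psi(y_{g_1},\dots,y_{g_k})$ for $y\in X_b$.
\end{enumerate}

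The main obstacle is a mismatch in the definition. ``Depends on finitely many parameters'' is phrased for functions on all of $\{0,1\}^G$, whereas $F$ is only defined on $X_b$. I would handle this by extending $F$ to $\{0,1\}^G$ through the formula $y\mapsto\psi(y_{g_1},\dots,y_{g_k})$. The extension is continuous (it is locally constant), agrees with $F$ on $X_b$, and visibly depends only on the coordinates $g_1,\dots,g_k$. Alternatively, one can read the definition as relative to $X_b$. Either way $F$ is a generalised projection on $X_b$, which finishes the proof.
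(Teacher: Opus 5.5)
Your proposal is correct and follows essentially the same route as the paper. The paper takes $\phi:X_{R,a}\to X_{R,b}$, sets $F=F_e\circ\phi^{-1}$ and $x=\phi(a)$, and invokes Theorem~\ref{CHL} for finite dependence; this is your construction with $\phi$ inverted. Your remark about extending $F$ to all of $\{0,1\}^G$ via the local code also cleanly resolves a domain ambiguity in the definition of generalised projection that the paper passes over.
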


\begin{proof}
In order to prove sufficiency, it is enough to show that 
$(X_{R,b},(R_g))$ is a Furstenberg system of $a$. This follows 
directly from the assumptions and the definition of a Furstenberg 
system via the quadruple $(X_{R,b},(R_g), F,x)$.

For necessity, suppose $\phi: (X_{R,a},(R_g)) \to (X_{R,b},(R_g))$ is 
an isomorphism between the Furstenberg systems of $a$ and $b$. Then $\phi^{-1}$ is 
a factor map that commutes with $(R_g)$, and thus, by Theorem \ref{CHL}, it follows that $\phi^{-1}$ is given 
via a local code (in the sense of Theorem \ref{CHL}). Now, let $F_e$ denote the projection onto the coordinate at the identity, and let $F=F_e\circ \phi^{-1}$. As $\phi$ is an isomorphism and $a$ is transitive in $X_{R,a}$, it follows that $x=\phi(a)$ is also transitive in $X_{R,b}$. Moreover, $a\in_F \overline{\{R_gb:g\in G\}}$ and $a=_F x$. Indeed, for any $g\in G$, we have that 
$$F(R_gx)=F_e(\phi^{-1} (R_g\phi(a)))=F_e(\phi^{-1} (\phi(R_ga)))=F_e(R_ga)=a(g),$$
showing that $a=_{F}x$. Finally, observe that $F$ is a generalised 
projection on $X_{R,b}$. Clearly, it is continuous and depends on finitely many parameters as the composition of a projection with a local code. Finally, we check that the orbit of $F$ separates points in $X_{R,b}$. If $y,w\in X_{R,b}$ are distinct, then $\phi^{-1}(y),\phi^{-1}(w)\in X_{R,a}$ are also distinct, since $\phi$ is a bijection. But then the $(R_g)$-orbit of $F_e$ separates the distinct points $\phi^{-1}(y),\phi^{-1}(w)$, and since $\phi^{-1}$ commutes with $R_g$, it follows that the $(R_g)$-orbit of $F=F_e\circ \phi^{-1}$ separates $y$ and $w$.
\end{proof}

\begin{remark*}
The assumption that $x$ is transitive is necessary, because, for 
example, $a\in \{0,1\}^{\Z}$ defined via $a(n)=1\iff n\geq 0$, is not isomorphic to the $0$ constant sequence, but $0\in \overline{\{T^na:n\in \Z\}}$. Also, we need $F$ to satisfy all the properties of a generalised projection. Indeed, we consider $a,b\in \{0,1\}^{\Z}$ defined via $a(n)=1$, if and only 
if $n\in \N$, and $b(n)=1$ if and only if $n\in 2\N$, and 
let $F:x\mapsto x(0)+x(1)\pmod{2}$. Notice that $F$ is a continuous function that depends on finitely many parameters. Then, $a\in_F \overline{\{T^nb:n\in \Z\}}$, 
and in fact, $a=_{F} b$. However, the points $a$ and $b$ 
are not 
isomorphic and this doesn't contradict Theorem \ref{characterising isomorphic sets}, because the $\Z$-orbit of $F$ does not separate points in $\overline{\{T^nb:n\in \Z\}}$. Indeed, the two limit points of $b$, $x_0,x_1$ defined via $x_0(n)=1 \iff n\in 2\Z$ and $x_1(n)=1 \iff n\in (2\Z+1)$, are such that $F(T^nx_0)=F(T^nx_1)=1$, for every $n\in \Z$.
\end{remark*}

\subsection{Distinguishing the left shift from Furstenberg Systems}\label{sec:left_shifts}
Let $G$ be a group and $f \colon G \to \mathbb{C}$ a bounded function. Consider the left shift action $L_h(x) = (x(h^{-1}g))_g$ on the compact space $\overline{f(G)}^G$. As discussed in Section~2.2, one may ask whether the system
\begin{equation} \widetilde{X}_{L,f}=\overline{\Big\{L_h(f(g))_{g\in G}:\ h\in G\Big\}}=\overline{\Big\{(f(h^{-1}g))_{g\in G}:\ h\in G\Big\}}\end{equation}
defines a Furstenberg system for $f$. If we take the transitive point $(f(g))_g$ and let $F$ be the projection onto the $e_G$-coordinate, then $F(L_h f) = f(h^{-1})$. This shows that, unless $f(h^{-1}) = f(h)$ for all $h \in G$, this choice of transitive point and observable $F$ does not capture the behaviour of $f$. However, this does not rule out the existence of a different transitive point in the same space and some continuous function that do realise the dynamics of $f$, making $\Tilde{X}_{L,f}$ still represent a Furstenberg system for $f$.

It turns out that, even for finite groups, the answer to 
this question depends heavily on the structure of the 
group. However, when $G$ is abelian, the system does indeed 
define a Furstenberg system for any choice of $f$. This 
fact, however, does not extend to the infinite 
setting: even for relatively simple infinite abelian 
groups, the statement fails.

A group $G$ is said to be Dedekind if every subgroup of $G$ 
is normal. This condition allows us to characterise the 
property of $(\widetilde{X}_{L,f},(L_g))$ being a 
Furstenberg system. Obviously, every abelian group is 
Dedekind.

\begin{theorem}
    Let $G$ be a finite group.  Then, $(\widetilde{X}_{L,f}, (L_g)_{g\in G})$  defines a Furstenberg system for every function $f:G\to \mathbb{C}$ if and only if $G$ is Dedekind.
\end{theorem}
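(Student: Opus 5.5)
The plan is to reduce everything to finite transitive $G$-sets. Since $G$ is finite, every orbit is finite and hence closed, so
$$X_{R,f}=\{R_hf:h\in G\},\qquad \widetilde{X}_{L,f}=\{L_hf:h\in G\}.$$
By Theorem~\ref{thm:Uniqueness} and the earlier remark that a system isomorphic to a Furstenberg system of $f$ is again a Furstenberg system of $f$, the following holds: $(\widetilde{X}_{L,f},(L_g))$ is a Furstenberg system of $f$ if and only if it is topologically isomorphic to $(X_{R,f},(R_g))$. Both spaces are finite and discrete, so a topological isomorphism is just an isomorphism of $G$-sets.

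First I identify both systems as coset spaces. Put
$$H_R(f)=\{h\in G: f(gh)=f(g)\ \forall g\}, \qquad H_L(f)=\{s\in G: f(sg)=f(g)\ \forall g\}.$$
Both are subgroups. For the right shift, $R_hf=R_kf$ holds iff $f(gh)=f(gk)$ for all $g$. Substituting $g\mapsto gk^{-1}$ turns this into $k^{-1}h\in H_R(f)$. Since $R_gR_h=R_{gh}$, the map $hH_R(f)\mapsto R_hf$ is an isomorphism from $G/H_R(f)$, with $G$ acting by left multiplication on left cosets, onto $X_{R,f}$. The same computation gives $L_hf=L_kf$ iff $k^{-1}h\in H_L(f)$, so $\widetilde{X}_{L,f}\cong G/H_L(f)$ with the same kind of action. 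Two transitive $G$-sets $G/H$ and $G/K$ are isomorphic iff $H$ and $K$ are conjugate. Hence the criterion is: $\widetilde{X}_{L,f}$ is a Furstenberg system of $f$ iff $H_L(f)$ and $H_R(f)$ are conjugate.

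Next I prove sufficiency. Assume $G$ is Dedekind. By definition $f$ is constant on each left coset $gH_R(f)$. Because $H_R(f)$ is normal, these left cosets are also the right cosets $H_R(f)g$. So $f(sg)=f(g)$ for every $s\in H_R(f)$, which gives $H_R(f)\subset H_L(f)$. The symmetric argument, using normality of $H_L(f)$, gives $H_L(f)\subset H_R(f)$. So the two subgroups are equal, in particular conjugate, and the criterion applies for every $f$.

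For necessity, suppose $H\le G$ is not normal. Let $f:G\to\C$ take a distinct value on each left coset of $H$. Then $H_R(f)$ is the largest subgroup $K$ such that every left coset $gK$ lies inside a single left coset of $H$, which forces $H_R(f)=H$. On the other hand $s\in H_L(f)$ iff $sgH=gH$ for all $g$, so $H_L(f)=\bigcap_g gHg^{-1}$, the core of $H$. Since $H$ is not normal, the core is a proper subgroup of $H$, so it has strictly smaller order and cannot be conjugate to $H$. Therefore $\widetilde{X}_{L,f}$ is not a Furstenberg system of this $f$.

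The main point of care is the choice of $f$ in the necessity direction. A natural first choice, the indicator of a single coset $aH$, fails: it gives $H_R=H$ and $H_L=aHa^{-1}$, which are conjugate. The remaining steps are routine bookkeeping, namely checking the coset identifications and the standard classification of transitive $G$-sets.
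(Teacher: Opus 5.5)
Your proof is correct. It follows the paper's overall plan: reduce via Theorem~\ref{thm:Uniqueness} to an isomorphism with $(X_{R,f},(R_g))$, use normality to make $R_hf\mapsto L_hf$ an isomorphism, and take as counterexample a function labelling the cosets of a non-normal subgroup. The key tool is different, though. You identify $X_{R,f}\cong G/H_R(f)$ and $\widetilde{X}_{L,f}\cong G/H_L(f)$ and invoke the classification of transitive $G$-sets by conjugacy classes of stabilisers. The paper instead works with explicit maps. In the non-Dedekind case it colours the right cosets of a non-normal cyclic subgroup $\langle b\rangle$, writes a putative isomorphism as $R_hf\mapsto L_{hc^{-1}}f$, and reaches a contradiction by an element chase.

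The paper's route is self-contained. Yours buys more:
\begin{enumerate}[(i)]
\item an exact criterion for each individual $f$, namely that $H_L(f)$ and $H_R(f)$ be conjugate;
\item a non-Dedekind direction that reduces to counting points, $[G:H]$ versus $[G:\mathrm{core}(H)]$;
\item an explicit statement that the Dedekind direction needs both inclusions $H_R(f)\subseteq H_L(f)$ and $H_L(f)\subseteq H_R(f)$, which the paper compresses into ``well-definedness and injectivity are analogous''.
\end{enumerate}

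Your criterion also shows that the remark following the paper's proof cannot hold as stated. Let $f$ take distinct values on the three left cosets of $\{e,(12)\}$ in $S_3$. Then for every $b\neq e$ there is $x_b$ with $f(x_b)\neq f(bx_b)$, i.e.\ $H_L(f)=\{e\}$. Yet $H_R(f)=\{e,(12)\}$, so $\widetilde{X}_{L,f}$ has $6$ points while $X_{R,f}$ has only $3$.
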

\begin{proof} By Theorem \ref{FS are isomorphic intro}, we know that the Furstenberg system of a function is unique up-to topological isomorphism. Hence, it is enough to check whether we can find a topological isomorphism between $(\widetilde{X}_{L,f}, (L_g)_{g\in G})$ and $(X_{R,f}, (R_g)_{g\in G})$.

    Firstly, suppose $G$ is Dedekind and let $f:G\to \mathbb{N}$ be a function, and consider its respective system $(\tilde{X}_{L,f}, (L_g)_{g\in G})$.  Let us assume the existence of $b_1\in G\setminus\{e\}$ such that $f(x)=f(b_1x)$ for all $x\in G$. If there exists $b_2\notin \langle b_1\rangle$ such that $f(x)=f(b_2x)$ for all $x\in G$, it follows that, for any $c\in \langle b_1,b_2\rangle $, $f(x)=f(cx)$ for all $x\in G$. More generally, we can consider a maximal subgroup $H$ such that $f(x)=f(cx)$ for all $x\in G$ if and only if $c\in H$. In case $b_1$ as above does not exist, we simply take $H=\{e\}$. In addition, it is easy to see that $f$ is constant on equivalence classes of $G/H$ We define the map
\begin{equation}
    \begin{aligned}
\varphi : (X_{R,f}, (R_g)_{g\in G}) &\to (\widetilde{X}_{L,f}, (L_g)_{g\in G}) \\
\Big(f(gh)\Big)_{g\in G} &\mapsto \Big(f(h^{-1}g)\Big)_{g\in G} 
\end{aligned}\label{finite_case_1}
\end{equation}

First, we emphasise that the arguments for well-definedness and injectivity
are analogous. For simplicity, we only present the proof of
injectivity. Let us consider $h,\Tilde{h}\in G$ such that $$\varphi \Big((f(gh))_{g\in G}\Big)=\varphi \Big((f(g\Tilde{h}))_{g\in G}\Big) \Leftrightarrow f(h^{-1}g)=f(\Tilde{h}^{-1}g) \text{ for all $g\in G$}.$$
Equivalently, $f(x)=f(\Tilde{h}^{-1}hx)$ for all $x\in G$.
Then, $\Tilde{h}^{-1}h\in H$. Since $G$ is Dedekind, $H$ is a normal subgroup, and therefore $[gh]_{H}=[g\Tilde{h}]_H$. This implies that $f(gh)=f(g\Tilde{h})$ for all $g\in G$, concluding that $\varphi$ is injective. Since $G$ is finite and discrete, $\varphi$ is obviously continuous. Also, it is easy to check that $\varphi \circ R_h=L_h\circ \varphi$ for every $h\in G$ and $\varphi$ is surjective. Therefore, $\varphi$ a topological isomorphism.

Now, we assume that $G$ is not Dedekind. Let $H$ be a 
non-normal subgroup of $G$. Then, we can find $a\in G$ such that $aH\neq Ha$. In particular, there is $b\in H$ such that $ba\notin aH$, and thus $ba\notin a\langle b\rangle $. We consider $G/\langle b \rangle=\{g_1,\dots,g_{\ell}\}$, where each element represents a different coset. We define $$f:G \mapsto \{1,\dots,\ell\},\ \text{via}\ f(x)=i, \ \text{whenever}\ x\in \langle b\rangle g_i.$$ We claim that $(\widetilde{X}_{L,f},(L_g)_g)$ is not a Furstenberg system of $f$.

     By contradiction, suppose there is a topological isomorphism $\varphi$ from $(X_{R,f},(R_g)_g)$ to $(\widetilde{X}_{L,f},(L_g)_g)$. Since $X_{R,f}$ and $\widetilde{X}_{L,f}$ are  finite, there exists $T:G\to G$ such that 
    $$\varphi\Big( (f(gh))_{g\in G}\Big)=(f(T(h)g))_{g\in G}.$$
    Since $\varphi \circ R_h=L_h\circ\varphi$, it follows that $T(h)=T(e)h^{-1}:=ch^{-1}.$
   By definition, $f(x)=f(bx)$ for all $x\in G$. Therefore, if $h:=c^{-1}b^{-1}c$, $$f(ch^{-1}g)=f(bcg)=f(cg)=f(ce^{-1}g) \text{ for all $g\in G.$}$$ Hence $\varphi((f(gh)))_g)=\varphi((f(g))_g).$
   Since $\varphi$ is injective, $$f(gh)=f(gc^{-1}b^{-1}c)=f(g)$$ for every $g\in G$. Then, $$\langle b \rangle g c^{-1}b^{-1}c=\langle b\rangle g.$$ 
   Taking $g=a^{-1}c$, $\langle b\rangle a^{-1}b^{-1}=\langle b\rangle a^{-1}$. However, this implies that $a^{-1}\in \langle b \rangle a^{-1}b^{-1}$, which in turn implies that $ba \in a\langle b\rangle$, leading to a contradiction. 
    \end{proof}
It is worth noting from the proof that, if for every $b \neq e$ there exists $x_b \in G$ such that $f(x_b) \neq f(bx_b)$, then no additional assumption on the group is needed, and the systems $X_{R,f}$ and $\widetilde{X}_{L,f}$ are isomorphic.

When passing to the infinite setting, the situation becomes substantially more delicate. In this case, continuity is no longer automatic, and the presence of limit points introduces difficulties in establishing this property, especially in groups where $g$ and its inverse behave differently. A trivial case is when the group $G$ is \emph{Boolean}, i.e., $g^2 = e$ for every $g \in G$. In Boolean groups, the actions $R_h(x) = (x(gh))_g$ and $L_h(x) = (x(h^{-1}g))_g$ coincide, and therefore $(\widetilde{X}_{L,f}, (L_g)_g)$ is the same system as $(X_{R,f}, (R_g)_g)$.

It is plausible that these examples essentially exhaust all such cases. 
The following technical condition show that $(\widetilde{X}_{L,f}, (L_g)_g)$ 
typically fails to be a Furstenberg system beyond this rigid setting.
    \begin{lemma}\label{tech}
           Let $G$ be an infinite group, and suppose there exists a finite-valued function $f:G\to \mathbb{C}$ such that $f(y)=0$ if and only if $y=e_G$, and with the following property: For every finite $S\subset G$, there exists $c\in G$ and $x\in G$ such that 
           \begin{equation}
               f(x)\neq f(cx), \text{ and } f(a x^{-1} b)=f(a(cx)^{-1}b) \text{ for all $a,b\in S$} .\label{eq_tech}
           \end{equation}
          Then, $(\widetilde{X}_{L,f},(L_g)_g)$ is not a a Furstenberg system of $f$.
    \end{lemma}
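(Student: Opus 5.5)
By Theorem~\ref{thm:Uniqueness} it suffices to rule out a topological isomorphism $\varphi\colon (X_{R,f},(R_g)_g)\to(\widetilde{X}_{L,f},(L_g)_g)$, so I would argue by contradiction and assume such a $\varphi$ exists. Put $y:=\varphi(f)$ and $F:=F_e\circ\varphi^{-1}$, where $F_e$ is the projection onto the coordinate at $e_G$. Then $y$ is $(L_g)$-transitive in $\widetilde{X}_{L,f}$, and since $\varphi\circ R_g=L_g\circ\varphi$ we get
\[
F(L_gy)=F_e(R_gf)=f(g)\qquad\text{for all } g\in G .
\]
Because $f$ is finite-valued, $F$ is a continuous finite-valued function on the compact space $\widetilde{X}_{L,f}$. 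The compactness argument from the proof of Theorem~\ref{CHL}, which only uses that $\varphi^{-1}$ is continuous and not the direction of the shift, then gives a finite set $S_0\subset G$ and a map $\Psi$ such that $F(z)=\Psi\bigl((z(s))_{s\in S_0}\bigr)$. Since $(L_gy)(s)=y(g^{-1}s)$, this yields
\[
f(g)=\Psi\bigl((y(g^{-1}s))_{s\in S_0}\bigr)\qquad\text{for all } g\in G .
\]

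Next I would use the hypothesis that $f$ vanishes exactly at $e_G$ to show that $y$ lies in the $L$-orbit of $f$. Each point $L_hf=(f(h^{-1}g))_g$ has exactly one zero, located at $g=h$. The set $\{z: z(t)=0\}$ is a cylinder, hence clopen.

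\emph{Step 1: $y$ has a zero.} Suppose $y$ had no zero. Then every point of $\overline{\{L_gy\}}$ would have no zero at $e_G$, because $\{z(e_G)\neq 0\}$ is closed and contains every $L_gy$: indeed $(L_gy)(e_G)=y(g^{-1})\neq0$. But $f$ itself, viewed as $L_{e}f$, satisfies $f(e_G)=0$ and lies in $\widetilde{X}_{L,f}=\overline{\{L_gy\}}$, which is a contradiction. So $y(d)=0$ for some $d\in G$.

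\emph{Step 2: $y=L_df$.} Write $y=\lim_i L_{h_i}f$. The cylinder $\{z(d)=0\}$ forces $h_i=d$ eventually, since $L_{h_i}f$ vanishes only at $h_i$. Hence $y=L_df$, that is, $y(t)=f(d^{-1}t)$.

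Substituting into the formula for $f$ gives
\[
f(g)=\Psi\bigl((f(d^{-1}g^{-1}s))_{s\in S_0}\bigr)\qquad\text{for all } g\in G .
\]
Now apply the hypothesis \eqref{eq_tech} with the finite set $S:=S_0\cup\{d^{-1}\}$. It provides $c,x\in G$ with $f(x)\neq f(cx)$ and $f(a x^{-1}b)=f(a(cx)^{-1}b)$ for all $a,b\in S$. Taking $a=d^{-1}$ and $b=s\in S_0$ shows that the $\Psi$-inputs for $g=x$ and for $g=cx$ coincide. Therefore $f(x)=f(cx)$, which is the desired contradiction.

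The main point needing care is Step 2, namely pinning $y$ down as an honest translate $L_df$ rather than merely a limit point. Without this, the finite-window identity would only relate $f$ to an unknown limit configuration, and \eqref{eq_tech} could not be applied. The unique-zero assumption is exactly what closes this gap. A secondary point is checking that the Curtis--Hedlund--Lyndon compactness step transfers verbatim to the left shift; it does, since only continuity of $\varphi^{-1}$ and finiteness of the alphabet are used.
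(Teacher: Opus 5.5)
Your proof is correct and follows essentially the same route as the paper. Both arguments get an isomorphism from Theorem~\ref{thm:Uniqueness}, extract a finite Curtis--Hedlund--Lyndon window, use the unique-zero hypothesis to force the relevant transitive point to be an honest translate, and then apply \eqref{eq_tech} to get $f(x)=f(cx)$. The only difference is a mirroring: the paper encodes $\varphi$ as a block code, pulls $f\in\widetilde{X}_{L,f}$ back to a translate $R_{h^*}f$ in $X_{R,f}$, and uses $S=\{g_1,\ldots,g_k,h^*\}$ with $b=h^*$; you encode $\varphi^{-1}$ through $F=F_e\circ\varphi^{-1}$, push $f$ forward to $L_df$ in $\widetilde{X}_{L,f}$, and take $a=d^{-1}$.
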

\begin{proof}
    Suppose that $\widetilde{X}_{L,f}$ is a Furstenberg system. By Theorem~\ref{thm:Uniqueness}, there exists a topological isomorphism
$\varphi$ between $(X_{R,f}, (R_g)_g)$ and $(\widetilde{X}_{L,f}, (L_g)_g)$.
Following the same idea as in the proof of Theorem~\ref{CHL}, we modify the argument and use the relation
$L_h \circ \varphi = \varphi \circ R_h$ to obtain elements $g_1, \ldots, g_k \in G$ such that, for every $y \in X_{R,f}$ and $h \in G$,  $$\varphi(y)|_{h}=\Big(L_{h^{-1}}\varphi(y)\Big)_0=\varphi\Big(R_{h^{-1}}y\Big)_0=\varphi\Big((y_{gh^{-1}})_g\Big)_0=\psi(y_{g_1h^{-1}},\cdots,y_{g_kh^{-1}}).$$
  Note that in $X_{R,f}$, every element of the form $(f(gh))_g$ is transitive. Suppose that $y\in X_{R,f}$ is transitive but not of that form. The transitivity implies the existence of $h\in G$ such that $y(h)=f(e)$. Since $(f(g))_g$ is transitive, for every finite set $S\cup \{h\}\subseteq G$ we can find $u\in G$ such that $f(gu)=y(g)$ for all $g\in S\cup \{h\}$. In particular, $f(hu)=y(h)=f(e)$, so necessarily $u=h^{-1}$. Since $S$ is arbitrary, it must be that $x=R_u (f(g)_g).$ Since $\varphi$ is bijective, in particular, there is $y^{*}\in X_{R,f}$ such that 
    $$\varphi(y^{*})|_h=f(h)=\psi(y^{*}_{g_1h^{-1}},\cdots,y^{*}_{g_kh^{-1}})$$ for all $h\in G$. Since $\varphi(y^{*})$ is transitive and and $\varphi$ is a topological conjugacy, then $y^{*}$ must be transitive as well. Then, there exists $h^{*}$ such that
    $$y^{*}=(f(gh^{*}))_g$$
    Therefore, for all $h\in G$,
    \begin{equation}
        f(h)=\psi\Big( f(g_1h^{-1}h^{*}),\cdots,f(g_kh^{-1}h^{*})\Big).\label{eq_tech2}
    \end{equation}
Let $S=\{g_1,\cdots,g_k,h^{*}\}.$ By the assumption, we can find $c,x\in G$ such that (\ref{eq_tech}) holds. However, (\ref{eq_tech2}) and the fact that $f(a x^{-1} b)=f(a(cx)^{-1}b) \text{ for all $a,b\in S$}$ implies $f(x)=f(cx),$ leading to a contradiction.
\end{proof}
    
For instance, the previous conditions applies in any group with a non-trivial homomorphism with $\mathbb{R}$. However, it also applies in much more general cases, including when moving sightly from of boolean groups.
\begin{corollary}
    Consider the boolean group $$B := \bigoplus_{i=1}^\infty \mathbb{Z}/2\mathbb{Z}
= \{(x_i)_i : x_i \in \mathbb{Z}/2\mathbb{Z},\ x_i \neq 0 \text{ for only finitely many } i\},$$
and let $G:=\Z/3\Z \times  B$. Then, there exists a finite-valued function $f:G\to \mathbb{C}$ such that $(\widetilde{X}_{L,f},(L_g)_g)$ is not the Furstenberg system of $f$.
\end{corollary}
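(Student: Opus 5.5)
The plan is to apply Lemma~\ref{tech} to $G=\Z/3\Z\times B$. This means constructing a finite-valued $f$ with $f(y)=0$ if and only if $y=e_G=(0,0)$, and such that for every finite $S\subset G$ there are $c,x\in G$ satisfying \eqref{eq_tech}. Since $G$ is abelian, $a x^{-1} b=(a+b)-x$ and $a(cx)^{-1}b=(a+b)-c-x$ in additive notation. So it suffices that for each finite $S'=S+S$ we can find $c,x$ with $f(x)\neq f(x+c)$ but $f(s-x)=f(s-x-c)$ for all $s\in S'$.

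I would write elements as $(t,v)$ with $t\in\Z/3\Z$, $v\in B$, and define $f$ so that its value depends only on $t$ and on the coordinate of $v$ at $m(v)$, where $m(v)$ is the largest index $i$ with $v_i\neq 0$. Concretely, I set $f(0,0)=0$. For $(t,v)\neq(0,0)$, I take $f(t,v)=1+t\in\{1,2,3\}$ when $v=0$, and otherwise I encode the ``sign'' of $t$ relative to the top coordinate. The point is that $x\mapsto -x$ acts as $t\mapsto -t$ on $\Z/3\Z$ and trivially on $B$, so $x$ and $-x$ should get different values. A cleaner choice that I would actually use is $f(t,v)=1+t$ for all $(t,v)\neq(0,0)$.

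With this choice, I take $c=(0,w)$, where $w=\mathbf{e}_N$ is the $N$-th unit vector and $N$ is beyond every index appearing in $S'$. Then $f(x+c)$ and $f(x)$ have the same $t$-component, so they differ only when one of $x$, $x+c$ is $e_G$. Hence I choose $x=(0,w)=c$, so that $x+c=(0,0)$ and $f(x)=1\neq 0=f(x+c)$. Next I check $f(s-x)=f(s-x-c)$, i.e.\ $f(s-(0,w))=f(s)$ for $s\in S'$. Both sides equal $1+t_s$ unless $s=(0,0)$ or $s=(0,w)$. The second case is excluded because $N$ lies outside the support of $S'$. The first case is a genuine problem: it gives $f(-(0,w))=1\neq 0=f(0,0)$.

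So the main obstacle is $s=e_G\in S'$. I would fix it by choosing $c$ with a nontrivial $\Z/3\Z$-component, which is exactly where the failure of the Boolean property matters. I take $x=(1,w)$ and $c=(1,w)$, so that $cx=(2,0)$, and redefine $f$ to break the symmetry between $t$ and $-t$ only when the $B$-part is $0$. Namely, I set $f(t,v)=1$ for $v\neq 0$, $f(0,0)=0$, $f(1,0)=2$ and $f(2,0)=3$. Then $f(x)=1\neq 3=f(2,0)=f(cx)$. For $s\in S'$, both $s-x=s-(1,w)$ and $s-cx=s-(2,0)$ need checking: the first has nonzero $B$-part, so its value is $1$, while the second has $B$-part equal to $v_s$. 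This again fails when $v_s=0$. The final step is therefore to tune $f$ on the finite-$B$-support layers: choose $x=(1,w)$ and $c=(1,w')$ with distinct fresh $w\neq w'$, so that both $x$ and $cx=(2,w+w')$ have fresh nonzero $B$-parts. Then I let $f$ depend on $t$ together with the parity of $|\mathrm{supp}\, v|$ beyond a threshold, which does not work uniformly. Instead, I would let $f(t,v)$ depend on $t$ and on whether $m(v)$ is odd or even. The fresh indices in $s-x$ and $s-cx$ then coincide at the top index when I take $w=\mathbf{e}_N$ and $w'=\mathbf{e}_N+\mathbf{e}_{N+1}$, which forces agreement on $S'$ while $x$ and $cx$ differ in $t$. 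I expect that verifying this bookkeeping, and getting $f$ to satisfy the hypotheses of Lemma~\ref{tech} for all finite $S$ simultaneously, is where the real work lies.
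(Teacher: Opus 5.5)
Your reduction is the same as the paper's: apply Lemma~\ref{tech}, so that in additive notation you need $f(x)\neq f(x+c)$ and $f(s-x)=f(s-x-c)$ for all $s\in S+S$. Your diagnosis of the first two attempts is also correct. In fact the case $v_s=0$ can never be avoided, since $a+a=(2t_a,0)\in S+S$ for every $a=(t_a,v_a)\in S$.

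The gap is in the last step, which is the whole content of the corollary: you never fix $f$, and the mechanism you describe is wrong. With $x=(1,\mathbf{e}_N)$ and $c=(1,\mathbf{e}_N+\mathbf{e}_{N+1})$ you get $x+c=(2,\mathbf{e}_{N+1})$. So for $N$ beyond all supports in $S+S$, the top index of $s-x$ is $N$ and that of $s-x-c$ is $N+1$; they do not coincide. If they did coincide, or merely had the same parity, then $x$ and $x+c$ would have that same parity too. Since the $t$-components of $s-x$ and $s-x-c$ are $t_s-1$ and $t_s-2$, agreement for all $s$ would force $f$ to be constant in $t$ on that parity class as soon as the $t$-projection of $S+S$ is all of $\Z/3\Z$ (e.g.\ $S\supseteq\{(0,0),(1,0),(2,0)\}$). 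That would give $f(x)=f(x+c)$. Nor does an arbitrary function of $t$ and the parity of $m(v)$ work. For instance $f(t,v)=\psi(t)$ fails, and so does $f(t,v)=\psi(t+\epsilon(v))$ with $N$ odd, where $\epsilon(v)$ is the parity of $m(v)$.

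The missing idea is that the parity flip must exactly compensate the shift in $t$. The move $x\mapsto x+c$ changes $(t,\epsilon)$ by $(+1,+1)$, while $y\mapsto y-c$ changes it by $(-1,+1)$. These differ precisely because $1\neq -1$ in $\Z/3\Z$, whereas the parity coordinate is Boolean.

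Concretely, let $\epsilon(v)\in\{0,1\}\subset\Z/3\Z$ be the parity of $m(v)$. Set $f(0,0)=0$, $f(t,0)=1$ for $t\neq 0$, and $f(t,v)=\psi(t+\epsilon(v))$ for $v\neq 0$, with $\psi:\Z/3\Z\to\{1,2,3\}$ a bijection. Choose $N$ even. Then $f(s-x)=\psi(t_s-1)=f(s-x-c)$ for every $s\in S+S$, while $f(x)=\psi(1)\neq\psi(0)=f(x+c)$. So \eqref{eq_tech} holds and Lemma~\ref{tech} applies.

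The paper's $f$ realises the same coupling through the parity of the length $n$ of an initial block $1^n$ (first $n$ coordinates equal to $1$). For it one can take $x=(1,1^{2k})$ and $c=(2,\mathbf{e}_{2k+1})$ with $2k$ beyond the supports; the paper itself leaves this check to the reader.
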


\begin{proof}
We define $f : G \to \mathbb{C}$ by setting $f(e_G) = 0$, and for 
$g = (g_1, g_2) \in \mathbb{Z}/3\mathbb{Z} \times B$ with $g \neq e_G$,
\[
f(g) =
\begin{cases}
1 & \text{if } g_1 = 1 \text{ and } g_2 = 1^{2k} \text{ for some } k \geq 1, \\
1 & \text{if } g_1 = 2 \text{ and } g_2 = 1^{2k+1} \text{ for some } k \geq 0, \\
-1 & \text{otherwise.}
\end{cases}
\]
In view of Lemma~\ref{tech}, the proof follows by checking that the function 
$f$ satisfies ~\eqref{eq_tech}. 

\end{proof}

\section{Minimal Systems, Minimal Sets, and Topological Recurrence}
\label{sec:5}
\subsection{Minimality in systems and sets}\label{sec:minimal}
Let $G$ be a semigroup. A set $S\subset G$ is \textit{left-syndetic} if 
there exist finitely many elements $g_1,\ldots,g_k\in G$ such that $(g_1^{-1}S) \cup \cdots \cup (g_k^{-1}S)=G$, where $g_i^{-1}S$ is well-defined even if $g_i$ is not invertible as the set $\{g\in G: g_ig \in S\}$. 
One can define right-syndetic sets in the obvious way, and in commutative
semigroups there is no distinction between left- and right-syndeticity. For general semigroups these are no longer equivalent notions and since we restrict our attention to systems defined by semigroup actions (although the same results hold for anti-actions as well), for our purposes, it turns out we will only need to consider left-syndetic sets. 

Now, let $(X,(T_g))$ be a topological dynamical $G$-system, or \textit{system} for 
short. A point $x\in X$ is called \textit{uniformly recurrent} if for any 
open neighborhood $V\subset X$ of $x$, the set $\{g\in G: T_gx \in V\}$ is 
left-syndetic. 

The following lemma is proven in \cite[Lemma 1.14]{Fur_book} for
abelian semigroups, but the proof is the same in full generality.

\begin{lemma}\label{minimal systems by coverings}
A system $(X,(T_g))$ is minimal if and only if for any non-empty open set $V\subset X$, there exist $g_1,\ldots,g_k$ such that $\bigcup_{i=1}^k T_{g_i}^{-1}V=X$. 
\end{lemma}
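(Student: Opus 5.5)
The plan is to prove both directions directly from the definitions, using compactness of $X$ and the fact that preimages of open sets under the continuous maps $T_g$ are open. No result beyond the basic properties of a system is needed.

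For the forward direction, I assume $(X,(T_g))$ is minimal and fix a non-empty open $V\subset X$. For each $x\in X$, the orbit $\{T_gx:g\in G\}$ is dense because $x$ is transitive. Hence there is $g_x\in G$ with $T_{g_x}x\in V$, that is, $x\in T_{g_x}^{-1}V$. Each set $T_{g_x}^{-1}V$ is open, since $T_{g_x}$ is continuous, so these sets form an open cover of $X$. By compactness, finitely many of them, say $T_{g_1}^{-1}V,\ldots,T_{g_k}^{-1}V$, already cover $X$, which is the required conclusion.

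For the converse, I assume the covering property and fix $x\in X$; the goal is to show the orbit of $x$ is dense. Let $V$ be an arbitrary non-empty open set. By hypothesis $X=\bigcup_{i=1}^k T_{g_i}^{-1}V$, so $x\in T_{g_i}^{-1}V$ for some $i$. This gives $T_{g_i}x\in V$, so the orbit of $x$ meets every non-empty open set and $x$ is transitive. Since $x$ was arbitrary, every point is transitive and the system is minimal by definition.

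I expect no real obstacle. The only point to check is that nothing commutative from the abelian proof in \cite{Fur_book} is used. Only preimages $T_g^{-1}V$ appear, never inverse maps or compositions in a particular order, so the argument works for arbitrary semigroup actions, including non-invertible $T_g$. Since the paper's convention is that all semigroups contain an identity, the orbit of $x$ includes $x$ itself, although the argument does not even need this.
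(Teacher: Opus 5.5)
Your proof is correct and is the standard compactness argument from Furstenberg's book, which the paper cites in place of giving its own proof. Furstenberg obtains the cover $\{T_g^{-1}V\}_{g\in G}$ from minimality, whereas with the paper's definition of minimality as ``every point is transitive'' your direct route gives it immediately; you are also right that only preimages $T_g^{-1}V$ appear, so neither commutativity nor invertibility is used.
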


The next result also appears in \cite{Fur_book} for 
abelian semigroups. Here, 
lack of commutativity forces us to consider left-syndetic 
sets and to showcase this fact we include the proof of 
the first part of this proposition.      

\begin{proposition}\label{minimal systems and uniform recurrence}
Let $(X,(T_g))$ be a system. If $\mathbb{X}$ is minimal, then every point $x\in X$ is uniformly recurrent. In a partial converse of that, if $x\in X$ is a uniformly recurrent point, then $(Y,(T_g))$, where $Y=\overline{\{T_gx: g\in G\}}$, is a minimal system.
\end{proposition}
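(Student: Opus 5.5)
Both directions will be driven by Lemma~\ref{minimal systems by coverings}. The paper says it proves only the first part, so I will concentrate on that and then sketch the converse.

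\emph{Minimal implies uniformly recurrent.} Let $\mathbb{X}$ be minimal, $x\in X$, and $V$ an open neighbourhood of $x$. By Lemma~\ref{minimal systems by coverings} there are $g_1,\ldots,g_k\in G$ with $\bigcup_{i=1}^k T_{g_i}^{-1}V=X$. Put $S=\{g\in G: T_gx\in V\}$. Fix $h\in G$ and apply the covering to the point $T_hx$: some $i$ has $T_{g_i}T_hx\in V$. Since $T_{g_i}T_h=T_{g_ih}$, this says $g_ih\in S$, i.e.\ $h\in g_i^{-1}S$ in the paper's sense $g_i^{-1}S=\{g: g_ig\in S\}$. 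Hence $\bigcup_i g_i^{-1}S=G$, so $S$ is left-syndetic.

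The one point that needs care is the order of composition. Because we have an action, $T_{g_i}T_h=T_{g_ih}$, so the syndeticity comes out on the left. This is exactly why left-syndeticity matches actions; for anti-actions one would get right-syndeticity instead.

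\emph{Uniformly recurrent implies minimal orbit closure.} Let $x$ be uniformly recurrent and $Y=\overline{\{T_gx:g\in G\}}$. It suffices to show $x\in\overline{\{T_gy:g\in G\}}$ for every $y\in Y$, since then that orbit closure contains the orbit of $x$ and hence all of $Y$. Take an open $V\ni x$. By regularity of compact Hausdorff spaces, choose an open $W\ni x$ with $\overline W\subset V$.

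By hypothesis $\{g: T_gx\in W\}$ is left-syndetic, so there are $g_1,\dots,g_k$ such that for every $h$, some $i$ has $T_{g_ih}x\in W$. Thus every $T_hx$ lies in the closed set $\bigcup_i T_{g_i}^{-1}\overline W$. Passing to the closure, every $y\in Y$ lies in it, so $T_{g_i}y\in\overline W\subset V$ for some $i$. Since $V$ was arbitrary, $x$ is in the orbit closure of $y$, and $(Y,(T_g))$ is minimal.

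The main obstacle is the step from the orbit to its closure: it needs $\overline W$ and continuity of the $T_{g_i}$ so that the union is closed. With that in place, both directions are short.
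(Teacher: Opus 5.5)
Your first part is exactly the paper's proof: you apply the covering from Lemma~\ref{minimal systems by coverings} to the point $T_hx$, and then $T_{g_i}T_h=T_{g_ih}$ gives $h\in g_i^{-1}V(x,U)$. The paper gives no proof of the converse (it defers to Furstenberg's book), and your argument for it is correct: choosing $\overline{W}\subset V$ makes $\bigcup_i T_{g_i}^{-1}\overline{W}$ a closed set containing the orbit of $x$, hence all of $Y$.
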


\begin{proof}
Assume $\mathbb{X}$ is minimal. Then, we claim that for every $x\in X$, and
every non-empty open set $U\subset X$, the set $V(x,U)=\{g\in G: T_gx\in U\}$ 
is left-syndetic. Indeed, by Lemma \ref{minimal systems by coverings} we find $g_1,\ldots,g_k\in G$ such that $\bigcup_{i=1}^k T_{g_i}^{-1}U=X$. But then, for any $g\in G$ there exists $i\in \{1,\ldots,k\}$, so that $T_gx\in T_{g_i}^{-1}U$. This implies that $T_{g_ig}x\in U$, so that $g\in g_i^{-1}V(x,U)$. This shows that $V(x,U)$ is indeed left-syndetic.
\end{proof}

The goal of this section is to obtain a characterization of \textit{minimal sets}, that is, those with minimal Furstenberg systems. We first introduce the relevant terminology necessary. Given a semigroup $G$, a set $H\subset G$ is called \textit{dynamically syndetic} if it is of the form $H=\{g\in G: T_ga\in U\}$, for some non-empty open set $U\subset X$, where $(X,(T_g))$ is a minimal $G$-system. Dynamically syndetic sets are always left-syndetic (this is implicit in the proof of Lemma \ref{minimal systems and uniform recurrence}), but not every syndetic set is dynamically syndetic (see, for example, \cite[(1.1)]{Glass_Le}). For more on dynamically syndetic sets -- on the integer setting -- we refer the reader to \cite{Glass_Le}. 

It turns out that minimal sets correspond to a special subclass of dynamically syndetic sets which we call \textit{strongly dynamically syndetic}. Let us first justify the need to consider a subclass of dynamically syndetic sets by showing that it is strictly larger than the class of minimal sets. 

\begin{proposition}
Not every dynamically syndetic set is minimal.    
\end{proposition}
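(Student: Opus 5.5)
We prove the statement by exhibiting a concrete counterexample over $G=\Z$: a minimal system $(X,T)$, a point $a\in X$ and an open set $U$ for which $A=\{n: T^na\in U\}$ is dynamically syndetic by definition, yet the Furstenberg system of $\mathbbm{1}_A$ is not minimal. The natural choice is a Sturmian system, since there the coding of a single orbit by an open set can produce a sequence whose orbit closure picks up extra points lying outside any minimal subsystem.

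Take $X=\T$ with the irrational rotation $T x=x+\alpha$, $a=0$, and $U=(0,\beta)$ an open arc with $\beta\notin \Z\alpha+\Z$ chosen so that the orbit of $0$ meets the boundary of $U$ only at the endpoint $0$ itself, and only at time $n=0$. Then $A=\{n\in\Z: n\alpha \bmod 1\in(0,\beta)\}$ is dynamically syndetic. By Theorem \ref{thm:Uniqueness} its Furstenberg system is the orbit closure $X_{R,\mathbbm{1}_A}$ of $\mathbbm{1}_A$ in $\{0,1\}^{\Z}$. We analyse this orbit closure through the standard Sturmian description: its limit points are the codings $x_t(n)=\mathbbm{1}_{U}(t+n\alpha)$ for $t$ outside the orbit of the endpoints, together with the two one-sided codings (using $[0,\beta)$ and $(0,\beta]$ respectively) at the boundary orbits. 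Set $\tilde a=\mathbbm{1}_{[0,\beta)}(n\alpha)$. This is the generic Sturmian limit, and the minimal Sturmian subshift $M$ consists of exactly these codings.

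The key step is to show that $\mathbbm{1}_A\notin M$. Indeed, $\mathbbm{1}_A$ differs from $\tilde a$ in exactly one coordinate, $n=0$, where $\mathbbm{1}_A(0)=0$ but $\tilde a(0)=1$. We claim no point of $M$ agrees with $\mathbbm{1}_A$ on all coordinates, and we argue this directly. Any $y\in M$ equals the coding of some $t$ with some boundary convention. If $y$ agrees with $\mathbbm{1}_A$ at every $n\neq 0$, then density of $\{n\alpha\}$ forces $t=0$, and then $y$ agrees with $\tilde a$ or with the $(0,\beta]$-coding. The $(0,\beta]$-coding also equals $1$ at $n=0$, since $0\notin(0,\beta]$ fails only for the left-closed convention, so we check it carefully. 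The only coding with value $0$ at $0$ treats the left endpoint as excluded, which is $\mathbbm{1}_A$ itself. For this to lie in $M$, the same convention would have to be used at the right endpoint $\beta$; but $\beta$ is not hit by the orbit of $0$, so both conventions are available. The cleaner approach is therefore to isolate the difference as a local word.

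For that reason it is simplest to use the well-known fact that in $M$ the pattern near $0$ is forced. Concretely, the word $\mathbbm{1}_A|_{[-N,N]}$ for large $N$ has a $0$ at a position where points of the orbit closure have $1$, because $n\alpha$ approaches $0$ from the right along $n$ for which $\mathbbm{1}_A(n)=1$. The difficulty is that such words may still occur in $M$. This verification is the main obstacle, and if it proves messy I would switch to an easier explicit example: a Toeplitz-like or Sturmian sequence with one flipped symbol, whose orbit closure is $M\cup\{\text{orbit of the defect}\}$. Such an orbit closure is non-minimal, because $M$ is a proper closed invariant subset and so the defect point is not uniformly recurrent, in the sense of Proposition \ref{minimal systems and uniform recurrence}. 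The remaining task is to realise that sequence as the visit set of some open set in some minimal system. One way is to take $U$ to be the complement of a single point in a circle rotation and $A=\{n: n\alpha\neq 0\}=\Z\setminus\{0\}$. This set is dynamically syndetic, since the rotation is minimal and $U$ is open, while its Furstenberg system is $\{\mathbf{1}\}\cup\{\text{shifts of }\mathbbm{1}_{\Z\setminus\{0\}}\}$. That system is not minimal, because the constant sequence $\mathbf 1$ forms a proper subsystem. This last example settles the proposition cleanly, and I would present it as the proof.
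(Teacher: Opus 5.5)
Your final example is correct and proves the proposition. $U=\T\setminus\{0\}$ is a non-empty open subset of the minimal system $(\T,R_\alpha)$, so $\Z\setminus\{0\}=\{n: n\alpha\neq 0 \bmod 1\}$ is dynamically syndetic. The shift-orbit closure of $\mathbbm{1}_{\Z\setminus\{0\}}$ consists of its translates together with the constant sequence $\mathbf{1}$, which is a fixed point and hence a proper subsystem.

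The paper uses a different example: $A=\{n: n\alpha\notin[\alpha,2\alpha]\}$ with $\alpha<1/2$. It shows via Lemma~\ref{uniformly recurrent symbolic points} that $\mathbbm{1}_A$ is not uniformly recurrent, because the word $00$ occurs only at $n=1$. Two consecutive visits to an arc of length exactly $\alpha$ force $n\alpha=\alpha$. The mechanism is the same in both cases: the orbit of $a$ hits $\overline{U}\setminus U$ at a place where $X\setminus U$ is too thin for the local pattern to recur. Yours is the degenerate instance where $X\setminus U$ is a single point, so already the one-letter word $0$ at coordinate $0$ occurs only once.

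Your route is more elementary. The orbit closure is read off by inspection, and you could even just note that the complement $\{0\}$, which has the same Furstenberg system, is not syndetic. The paper's example is less degenerate: both $A$ and $\Z\setminus A$ are syndetic, so the failure of minimality is not visible from such trivial considerations. It shows the phenomenon is not an artefact of cofinite sets.

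In a write-up, drop the exploratory first part entirely. Its key claim is not merely hard to verify but false. Take $U=(0,\beta)$ with $\beta\notin\Z\alpha+\Z$. Then $\mathbbm{1}_A$ coincides with the $(0,\beta]$-coding of $0$, which takes the value $0$, not $1$, at $n=0$, contrary to what you wrote. It is also the pointwise limit, as $\epsilon\to 0^+$, of the codings $n\mapsto \mathbbm{1}_{(0,\beta)}(n\alpha-\epsilon)$, taking $\epsilon$ off the countably many orbit points of the endpoints. Hence $\mathbbm{1}_A$ lies in the Sturmian subshift $M$, and $A$ is in fact a minimal set. This is exactly the situation covered by Proposition~\ref{characterizing minimal sets stronger} and the example $U=(0,1/3)$ discussed just before it.
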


\begin{proof}
Indeed, consider an irrational rotation on 
the torus, $(\T,R_{\alpha})$, for some $\alpha \notin \Q$, $\alpha<1/2$. Then, 
$A=\{n\in \Z: n\alpha \notin [\alpha,2\alpha]\}$ is dynamically syndetic, 
since the rotation is minimal, but we claim it is not a minimal 
set. Considering its Furstenberg system, and letting $a=\mathbbm{1}_A \in \{0,1\}^{\Z}$, it suffices to show that $a$ is not uniformly recurrent. In view of Lemma \ref{uniformly recurrent symbolic points} below (in the integer setting this appears in \cite[Proposition 1.22]{Fur_book}) we simply have to find a word that appears in $a$, but does not occur syndetically. Note that if $a(n)=0$, then $n\alpha \in [\alpha,2\alpha]$. Hence, the only way that $a(n)=a(n+1)=0$ is if $n\alpha=\alpha$ and $(n+1)\alpha=2\alpha$, which, since $\alpha\notin \Q$, happens exactly once, when $n=1$. In particular, the word $00$ occurs in $a$, but not syndetically. 
\end{proof}

Before defining strongly dynamically syndetic sets, we 
first remark that it is not too difficult to see that 
minimal sets are the dynamically syndetic sets which 
correspond to visit times in clopen sets. Indeed, one
of the implications directly follows by considering the 
symbolic Furstenberg system of a minimal set, and 
observing that cylinder subsets of $\{0,1\}^G$ are 
clopen. 
For the converse, if $H=\{g\in G: T_ga\in U\}$, for 
some non-empty clopen set $U\subset X$, in a minimal system $(X,(T_g))$, then, $(X,(T_g))$ is a 
generalised Furstenberg system of $H$, through the continuous 
function $F=\mathbbm{1}_{U}$. By Theorem \ref{FS are isomorphic intro} (or Theorem \ref{maximalFS}), we can find a factor map from $(X,(T_g))$ to a Furstenberg system of $H$. As factors of minimal systems are minimal, we conclude that $H$ is a minimal subset of $G$.

We can actually give a better, that is to say, seemingly 
more general, characterization of minimal sets. 

\begin{definition}\label{strongly dynamically syndetic sets}
A set $H\subset G$ is called \textit{strongly dynamically syndetic} if it can be written as $\{g\in G: T_ga\in U\}$, where $(X,(T_g))$ is a minimal $G$-system, $U\subset X$ is some non-empty open set, and, in addition, the set $\{g\in G: T_ga\in \overline{U}\setminus U\}$ is empty.     
\end{definition}

Note, in particular, that if $U$ is a clopen set, then $\overline{U}\setminus U=\emptyset$, hence $\{g\in G: T_ga\in \overline{U}\setminus U\}$ is trivially empty. 
Moreover, it is not a priori obvious that strongly dynamically syndetic sets are the same as dynamically syndetic sets on clopen target sets. 

\begin{theorem} \label{characterizing minimal sets}
Let $G$ be a semigroup. Then, a subset of $G$ has a minimal 
Furstenberg system if and only if it is strongly dynamically syndetic.
\end{theorem}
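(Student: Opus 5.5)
The plan is to prove the two implications separately. The forward one is essentially contained in the clopen remark preceding Definition~\ref{strongly dynamically syndetic sets}. The converse needs a factor-map argument.

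($\Rightarrow$) Suppose $H\subset G$ has a minimal Furstenberg system. By Theorem~\ref{thm:Uniqueness} this system is isomorphic to the symbolic one, so $(X_{R,h},(R_g))$ is minimal, where $h=\mathbbm{1}_H$. Let $U=\{x\in X_{R,h}: x(e)=1\}$. This set is clopen because it is a cylinder set. Then
\[
H=\{g\in G: R_g h\in U\}, \qquad \overline{U}\setminus U=\emptyset,
\]
so $H$ is strongly dynamically syndetic. If $H=\emptyset$ then $U$ is empty, but the empty set and $G$ can be handled trivially or excluded by convention.

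($\Leftarrow$) Let $H=\{g: T_ga\in U\}$, where $(X,(T_g))$ is minimal, $U$ is open and nonempty, and no $T_ga$ lies in $\partial U:=\overline U\setminus U$. Minimality makes $a$ transitive. The difficulty is that $\mathbbm{1}_U$ need not be continuous on $X$. The idea is to restrict to the set where it \emph{is} continuous and observe that the orbit of $a$ lives there. Set $\Omega=X\setminus\bigcup_{g\in G}T_g^{-1}(\partial U)$. Then $\mathbbm{1}_U$ is continuous on $\Omega$, since $U$ and $X\setminus\overline U$ are open and together cover $X\setminus\partial U$. Also $\Omega$ contains the orbit of $a$ and is $(T_g)$-invariant in the sense $T_g\Omega\subset\Omega$.

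I would then show directly that the symbolic system $X_{R,h}$ is a factor of $X$, which suffices because factors of minimal systems are minimal. Define $\pi(x)=(\mathbbm{1}_U(T_gx))_{g\in G}$ on $\Omega$. This map is continuous on $\Omega$ in the product topology, equivariant ($\pi\circ T_g=R_g\circ\pi$), and satisfies $\pi(a)=h$. The problem is extending $\pi$ to all of $X$. Rather than extending it, I would argue minimality of $X_{R,h}$ via uniform recurrence of $h$. By Proposition~\ref{minimal systems and uniform recurrence} it suffices to show that $h$ is uniformly recurrent. A basic neighbourhood of $h$ is a cylinder
\[
\{x: x(g_i)=h(g_i),\ i=1,\dots,k\}.
\]
Its return set is $\{u: T_{g_iu}a\in U \iff T_{g_i}a\in U\ \forall i\}$. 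Consider
\[
W=\bigcap_{i} T_{g_i}^{-1}(U_i), \qquad U_i=\begin{cases} U & \text{if } h(g_i)=1,\\ X\setminus\overline U & \text{if } h(g_i)=0.\end{cases}
\]
$W$ is open and contains $a$ precisely because $T_{g_i}a\notin\partial U$. The return set contains $\{u: T_ua\in W\}$, which is left-syndetic by the first part of Proposition~\ref{minimal systems and uniform recurrence}. A superset of a left-syndetic set is left-syndetic, so $h$ is uniformly recurrent and $X_{R,h}$ is minimal.

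The main obstacle is exactly this boundary issue: choosing $X\setminus\overline U$ rather than $X\setminus U$ so that $W$ is open. The hypothesis $\{g:T_ga\in\partial U\}=\emptyset$ is what makes this choice legitimate. The remaining detail to check is that the right-shift return sets match the paper's left-syndeticity convention: $R_uh\in C$ iff $h(g_iu)=h(g_i)$, and $T_{g_iu}a=T_{g_i}T_ua$.
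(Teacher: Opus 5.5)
Your proof is correct and follows the paper's argument essentially verbatim: the forward direction uses the clopen cylinder $\{x : x(e)=1\}$ in the minimal symbolic system, and the converse shows $\mathbbm{1}_H$ is uniformly recurrent via the open neighbourhood $\bigcap_i T_{g_i}^{-1}U_i$ of $a$ with $U_i\in\{U, X\setminus\overline{U}\}$ (your $\Omega$/factor-map detour is not needed). One correction to your aside: $H=G$ causes no trouble, but $H=\emptyset$ cannot be handled trivially---its Furstenberg system is a single point and hence minimal, yet it is never strongly dynamically syndetic because every orbit in a minimal system meets the non-empty open set $U$---so it must be excluded by convention, a point the paper's proof also passes over.
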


\begin{remark}\label{minimal set theorem discussion}
We stress that a minimal set $H$ will, in general, have 
many different representations as a set of hitting 
times of some orbit in distinct dynamical systems. The 
result states that if we can find one representation as 
a strongly dynamically syndetic set, then the set is 
minimal. The utility of such a result lies in the fact 
that it allows one to verify that a set has a minimal 
Furstenberg system by an oftentimes simpler procedure 
than checking the Furstenberg system directly. Consider
for example the set 
$H=\{n\in \Z: n\alpha \in (1/3,2/3)\}$, for some 
$\alpha\notin \Q$. It is not too difficult to check 
directly, that is, using the symbolic construction, 
that the Furstenberg system of $H$ is 
indeed minimal, but it certainly is simpler to verify 
that $H$ is strongly dynamically syndetic. 
\end{remark}

Before giving the proof of Theorem \ref{characterizing minimal sets} we introduce some necessary 
terminology and an intermediate lemma.

Let $G$ be a semigroup and $a=(a(g))_{g\in G}\in \Lambda^G$, where $\Lambda$ 
is some finite alphabet. We define a word in
$a$ to be any tuple of the form $a_1\cdots a_k;g_1,\dots,g_k$, where  
$a_1\cdots a_k\in \Lambda^k$ and $g_1,\dots,g_k\in G$ are such that 
$a(g_1h)=a_1,\dots,a(g_kh)=a_k$, for some $h\in G$. In this case we say that \textit{the word $a_1\cdots a_k;g_1,\dots,g_k$ occurs at $h$}. Recall our standing assumptions that semigroups under consideration are enriched with an identity element. For the results we are about to prove the identity could be replaced by any fixed element of the semigroup. The reason for this -- as the tentative reader may observe in what follows -- is that if $e\in G$ is any element and $S\subset G$ is a left-syndetic set, then $eS$ is also left-syndetic. In any case, we denote the identity element by $e$, and speak of an \textit{initial word} in $a$ if the word occurs at $e$.

\begin{lemma}\label{uniformly recurrent symbolic points}
Let $G$ be a semigroup and $\Lambda$ be a finite alphabet. A point $a\in \Lambda^{G}$ is uniformly recurrent for the right shift if and only if, every initial word in $a$, occurs along a left-syndetic set.   
\end{lemma}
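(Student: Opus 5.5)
The plan is to unwind the definitions and translate between neighbourhoods of $a$ in $\Lambda^G$ and initial words. First I fix the conventions. The basic open neighbourhoods of $a$ in the product topology are the cylinders
$$C(a_1,\ldots,a_k;g_1,\ldots,g_k)=\{x\in\Lambda^G: x(g_i)=a_i,\ i=1,\ldots,k\},$$
with $a_i=a(g_i)$. Since $\Lambda$ is finite and discrete, singletons are open, so these cylinders do form a neighbourhood basis at $a$. For the right shift, $R_h a\in C(a_1,\ldots,a_k;g_1,\ldots,g_k)$ holds if and only if $a(g_ih)=a_i$ for all $i$. This says exactly that the initial word $a_1\cdots a_k;g_1,\ldots,g_k$ occurs at $h$. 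Hence, for every initial word $w$,
$$\{h\in G: R_ha\in C_w\}=\{h\in G: w \text{ occurs at } h\},$$
where $C_w$ is the corresponding cylinder. Note that a word occurring at $e$ is precisely a word with $a_i=a(g_i)$, so initial words are in bijection with basic cylinder neighbourhoods of $a$.

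For the forward direction, assume $a$ is uniformly recurrent. Given an initial word $w$, the cylinder $C_w$ is an open neighbourhood of $a$. By the definition of uniform recurrence, $\{h: R_ha\in C_w\}$ is left-syndetic, and by the identity above this is the set of occurrences of $w$.

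For the converse, let $V$ be any open neighbourhood of $a$ and choose a basic cylinder $C_w\subset V$ containing $a$; its word $w$ is an initial word in $a$. By hypothesis the occurrence set of $w$ is left-syndetic. This set is contained in $\{h: R_ha\in V\}$, and supersets of left-syndetic sets are left-syndetic, since the same $g_1,\ldots,g_k$ witness the covering $\bigcup g_i^{-1}S=G$. So $a$ is uniformly recurrent.

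There is no genuine obstacle here. The only points needing care are checking that cylinders form a neighbourhood basis at $a$ (this uses the finiteness and discreteness of $\Lambda$) and confirming that the right-shift convention $R_ha(g)=a(gh)$ matches the paper's definition of "occurs at $h$", which places $h$ on the right.
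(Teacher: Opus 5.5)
Your proposal is correct and follows essentially the same route as the paper: both directions come from the identity $\{h: R_ha\in C_w\}=\{h: w \text{ occurs at } h\}$ for an initial word $w$ and its cylinder $C_w$. You also state explicitly two steps the paper leaves implicit in its ``it suffices'' reduction to basic cylinders: that cylinders form a neighbourhood basis at $a$, and that supersets of left-syndetic sets are left-syndetic.
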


\begin{proof}
In $\Z$ this appears in \cite[Proposition 1.22]{Fur_book}. Assume that $a$ is 
uniformly recurrent and let $a_1\cdots a_k;g_1,\dots, g_k$ be a word that 
occurs at $e$ in $a$. 
Then, there exists a cylinder set 
$$C=C(a_1,\ldots,a_k;g_1,\ldots,g_k)=\{x=(x(g))\in \Lambda^G: x(g_i)=a_i,\ 
\text{for each}\ i=1,\ldots,k\}$$
so that $a\in C$ and notice that $C$ is a clopen set. By the definition of 
uniform recurrence, the set $V(a,C)=\{g\in G: T_ga\in C\}$ is left-syndetic, 
where $T_g$ denotes the right shift by $g\in G$. But for 
any $h\in V(a,C)$ 
we see that the word $a_1\cdots a_k; g_1,\dots, g_k$ 
occurs at $h$, and hence this word occurs along a 
left-syndetic set. Indeed, $h\in V(a,C)$ means that 
$T_ha\in C$ and thus $T_ha(g_i)=a(g_ih)=a_i$, for each $i=1,\ldots,k$.

Conversely, assume that every initial word in $a$ occurs along a left-syndetic set and let $C\subset \Lambda^{G}$ be a basic open set that 
contains $a$, namely a cylinder $C=C(a_1,\ldots,a_k;g_1,\ldots,g_k)$ as in the previous part of the proof. In order to prove that $a$ is uniformly recurrent it suffices to 
show that $V(a,C)$, defined as before, is left-syndetic. Unravelling the 
definitions, $a\in C$ means that the word 
$a_1\cdots a_k;g_1,\dots, g_k$ occurs at $e$, so it is an initial word. By assumption, it follows that $V(a,C)$ is left-syndetic, because 
$h\in V(a,C)$ if and only if $a_1\cdots a_k;g_1,\dots, g_k$ occurs at $h$.
\end{proof}

We can now prove Theorem \ref{characterizing minimal sets}.

\begin{proof}[Proof of Theorem \ref{characterizing minimal sets}]
Note that by Theorem \ref{FS are isomorphic intro}, a set has a minimal Furstenberg system if and only if its symbolic Furstenberg system is minimal. Now, assume $H\subset G$ is a minimal set identified with its indicator function $f:G\to \{0,1\}$ and let $(X_{R,f},(T_g))$ be the right shift symbolic Furstenberg system of $f$. Then, by assumption, $(X_{R,f},(T_g))$ is a minimal system and by the definition of $(X_{R,f},(T_g))$ we can represent $H$ as $\{g\in G: T_gf\in U\}$, where $U=\{x\in X_{R,f}: x(e_G)=1\}$ is a non-empty clopen set. 

For the converse direction, assume $H=\{g\in G: T_ga\in U\}$, for 
some non-empty open set $U\subset X$, where $(X,(T_g))$ is a 
minimal $G$-system and further assume that $\{g\in G: T_ga\in \overline{U}\setminus U\}=\emptyset$. We consider the orbit closure $X_{R,H}$ of $\mathbbm{1}_H \in \{0,1\}^G$ and claim that $(X_{R,H},(R_g))$ is a minimal system. Since this is a symbolic Furstenberg system of $H$ this concludes the proof. To prove the claim, by Proposition \ref{minimal systems and uniform recurrence}, it suffices to show that $\mathbbm{1}_H$ is uniformly recurrent. To this end, we will apply Lemma \ref{uniformly recurrent symbolic points}.

Let $a_1\cdots a_k;g_1,\dots,g_k$ be a word that occurs at $e$ in 
$\mathbbm{1}_H$, where $a_1,\dots,a_k\in \{0,1\}$ and $g_1,\ldots,g_k\in G$. By definition, $T_{g_j}a\in U$ if and only if $\mathbbm{1}_H(g_j)=1$ if and only if $a_j=1$. By our assumption, if $T_{g_j}a \notin U$, then $T_{g_j}a \in X\setminus\overline{U}$. So, $T_{g_j}a$ are interior points of either $U$ or its complement. Let $U_j=U$ if $a_j=1$ and $U_j=X\setminus\overline{U}$ if $a_j=0$.
By continuity of $T_{g_1},\ldots,T_{g_k}$, there is an open neihborhood of $a$, $V=V_{g_1,\ldots,g_k}$, such that $T_ga\in V$ implies that $T_{g_j}T_ga=T_{g_jg}a\in U_j$, for all $j=1,\ldots,k$. Since $(X,(T_g))$ is minimal we see that $\{g\in G: T_ga\in V\}$ is 
left-syndetic, which in turn implies that the word $a_1\cdots a_k;g_1,\ldots,g_k$ occurs in $\mathbbm{1}_H$ 
left-syndetically.   
\end{proof}

Observe that nothing too specific about $\{0,1\}$-valued 
functions  was used 
in the arguments above that prohibits analogous results for finite-valued 
functions, which correspond to finite colourings of $G$, just as $\{0,1\}$-valued functions correspond to subsets of $G$. In fact, Lemma 
\ref{minimal systems and uniform recurrence} was proven in the general form 
for finite alphabets. We say that a \textit{colouring} $G=C_1\cup \cdots \cup C_r$ -- which is simply a partition of $G$ into finitely many cells -- is 
\textit{minimal}, if the Furstenberg system of $f:G\to \{1,\ldots,k\}$ defined via 
$f(g)=i$ if and only if $i\in C_i$, is minimal. 

\begin{theorem}\label{minimal colourings}
Let $G$ be a semigroup. A colouring $G=C_1\cup \cdots \cup C_r$ is 
minimal if and only if there is a minimal system $(X,(T_g))$, a point 
$a\in X$ and a partition $X=U_1\cup \cdots \cup U_k$ into sets with non-empty 
interior, such that $C_i=\{g\in G: T_ga\in U_i^{\circ}\}$, for each 
$i=1,\ldots,k$.
\end{theorem}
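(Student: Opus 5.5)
The plan mirrors the proof of Theorem~\ref{characterizing minimal sets}, with the alphabet $\{0,1\}$ replaced by $\{1,\ldots,r\}$ (reading $k=r$ in the statement). Let $f:G\to\{1,\ldots,r\}$ be the colouring function. By Theorem~\ref{FS are isomorphic intro}, the colouring is minimal if and only if the symbolic system $(X_{R,f},(R_g))$ is minimal.

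\emph{Necessity.} Assume the colouring is minimal and take $X=X_{R,f}$ with $a=f$. Set $U_i=\{x\in X_{R,f}: x(e)=i\}$ for $i=1,\ldots,r$. These are cylinder sets, hence clopen. They partition $X_{R,f}$, and each is non-empty because it contains $R_g f$ for any $g\in C_i$. (I assume all cells are non-empty; otherwise empty cells are discarded.) Since $U_i$ is clopen, $U_i^{\circ}=U_i$. Moreover $R_g f\in U_i$ if and only if $f(g)=i$, so $C_i=\{g: R_g f\in U_i^{\circ}\}$.

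\emph{Sufficiency.} Let $(X,(T_g))$, $a$ and the partition $X=U_1\cup\cdots\cup U_r$ be as in the statement, with $C_i=\{g: T_ga\in U_i^{\circ}\}$. Because the $C_i$ cover $G$, every orbit point $T_ga$ lies in the interior of its own cell $U_{f(g)}$. This plays the role of the hypothesis $\{g: T_ga\in\overline U\setminus U\}=\emptyset$ in the set case.

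By Proposition~\ref{minimal systems and uniform recurrence}, it suffices to show that $f$ is uniformly recurrent under the right shift. By Lemma~\ref{uniformly recurrent symbolic points}, which holds for any finite alphabet, it is enough to show that every initial word $a_1\cdots a_k;g_1,\dots,g_k$ of $f$ occurs along a left-syndetic set. For such a word we have $T_{g_j}a\in U_{a_j}^{\circ}$ for each $j$. By continuity of the maps $T_{g_j}$, there is an open neighbourhood $V$ of $a$ such that $T_{g_j}V\subset U_{a_j}^{\circ}$ for all $j$. Then for any $g$ with $T_ga\in V$ we get $T_{g_jg}a\in U_{a_j}^{\circ}$, which gives $f(g_jg)=a_j$; so the word occurs at $g$. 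Since $X$ is minimal, the set $\{g: T_ga\in V\}$ is left-syndetic by Proposition~\ref{minimal systems and uniform recurrence}, and the word occurs along it.

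There is no serious obstacle. The only point requiring care is using the covering property to force each $T_ga$ into the interior of its own cell. One also has to confirm that the intended reading is $C_i=\{g: T_ga\in U_i^{\circ}\}$ with $\bigcup_i C_i=G$, which makes the boundary points of the partition unvisited by the orbit of $a$.
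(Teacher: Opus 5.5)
Your proof is correct and follows the argument the paper intends. The paper gives no separate proof of this theorem; it only remarks that the proof of Theorem~\ref{characterizing minimal sets} carries over to finite alphabets, and your write-up is that adaptation: clopen cylinder cells for necessity, and for sufficiency the covering hypothesis placing every orbit point in the interior of its own cell, followed by Lemma~\ref{uniformly recurrent symbolic points} and Proposition~\ref{minimal systems and uniform recurrence}, with sensible conventions $k=r$ and non-empty cells.
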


This almost immediately implies that for a mimimal colouring, every colour 
must be a minimal set itself. 

\begin{corollary}\label{minimal colourings corollary}
If a colouring $G=C_1\cup \cdots \cup C_r$ is 
minimal, then each colour is a minimal set. 
\end{corollary}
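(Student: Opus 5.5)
We need to show each colour $C_i$ is a minimal set, i.e.\ the Furstenberg system of $\mathbbm{1}_{C_i}$ is minimal. The simplest route is the factor-map argument already used in the discussion preceding Definition~\ref{strongly dynamically syndetic sets}, rather than passing through Theorem~\ref{minimal colourings}.

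\textbf{Factor-map argument.} Let $f:G\to\{1,\ldots,r\}$ be the colouring function, and let $(X_{R,f},(R_g))$ be its symbolic Furstenberg system, which is minimal by assumption. Define $F_i:X_{R,f}\to\{0,1\}$ by $F_i(x)=1$ if and only if $x(e)=i$. This is continuous, since $\{x: x(e)=i\}$ is a clopen cylinder set. Moreover, $F_i(R_gf)=f(g)$-indicator, i.e.\ $F_i(R_gf)=\mathbbm{1}_{C_i}(g)$, and $f$ is a transitive point.

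Hence $(X_{R,f},(R_g))$ is a generalised Furstenberg system of $\mathbbm{1}_{C_i}$. By Theorem~\ref{maximalFS}, the Furstenberg system of $C_i$ is a factor of $(X_{R,f},(R_g))$. Concretely, the map $x\mapsto (F_i(R_gx))_{g\in G}$ is a factor map onto $X_{R,\mathbbm{1}_{C_i}}$.

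Factors of minimal systems are minimal: if $\pi:X\to Y$ is a factor map and $y=\pi(x)$, then $\overline{\{S_gy\}}\supseteq\pi(\overline{\{T_gx\}})=\pi(X)=Y$, using compactness for the first inclusion. Therefore the Furstenberg system of $C_i$ is minimal, and $C_i$ is a minimal set.

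\textbf{Alternative via Theorem~\ref{minimal colourings}.} One can instead take the minimal system and partition $X=U_1\cup\cdots\cup U_r$ given by the theorem, so that $C_i=\{g: T_ga\in U_i^\circ\}$, and then verify that $C_i$ is strongly dynamically syndetic with open set $U_i^\circ$. The difficulty there is showing that no orbit point lands in $\overline{U_i^\circ}\setminus U_i^\circ$. Each $T_ga$ lies in some $U_j^\circ$, which is open and disjoint from $U_i^\circ$ for $j\neq i$, hence disjoint from $\overline{U_i^\circ}$. So the boundary condition does hold, and Theorem~\ref{characterizing minimal sets} would also finish the proof.

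The only step requiring care in either route is the continuity of the indicator observable, which is automatic because the alphabet is finite and cylinders are clopen. I therefore expect no real obstacle and would present the factor-map argument as the main proof.
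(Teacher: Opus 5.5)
Your argument is correct, but your main route is not the one the paper takes.

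The paper's proof is essentially your ``alternative''. It takes the representation $C_i=\{g: T_ga\in U_i^\circ\}$ from Theorem~\ref{minimal colourings}. It then observes that each orbit point $T_ga$ lies in some $U_j^\circ$, which for $j\neq i$ is open and disjoint from $U_i$, hence disjoint from $\overline{U_i}$. So each colour is strongly dynamically syndetic, and Theorem~\ref{characterizing minimal sets} finishes. Your write-up of this is, if anything, slightly cleaner, since you work directly with the open set $U_i^\circ$ that Definition~\ref{strongly dynamically syndetic sets} requires.

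Your main route instead treats the minimal system $(X_{R,f},(R_g))$ as a generalised Furstenberg system of $\mathbbm{1}_{C_i}$ through the clopen observable $F_i$. It then uses Theorem~\ref{maximalFS} and the fact that factors of minimal systems are minimal. This is the same reasoning the paper uses just before Definition~\ref{strongly dynamically syndetic sets} for visit times of clopen sets.

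What your route buys:
\begin{enumerate}[(i)]
\item It avoids the word/uniform-recurrence argument inside the proof of Theorem~\ref{characterizing minimal sets}.
\item It does not lean on Theorem~\ref{minimal colourings}, which the paper justifies only by analogy. The direction needed here is in any case trivial, via the cylinders $\{x: x(e)=i\}$ in $X_{R,f}$.
\item It gives more at no extra cost: for any $\theta:\{1,\ldots,r\}\to\{1,\ldots,s\}$, the recoloured function $\theta\circ f$ is again minimal. In particular any union of colours is a minimal set.
\end{enumerate}
The paper's route has the merit of exercising the strongly dynamically syndetic characterisation, which is the theme of that section.

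A minor nit: the inclusion $\pi\bigl(\overline{\{T_gx\}}\bigr)\subseteq\overline{\{S_gy\}}$ in your factor argument uses only continuity of $\pi$, not compactness.
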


\begin{proof}
Assume $G=C_1\cup \cdots \cup C_r$ is a minimal colouring. By Theorem 
\ref{characterizing minimal sets} we have to show that each colour is 
strongly dynamically syndetic, namely that, in the notation of Theorem \ref{minimal colourings}, $\{g\in G: T_ga\in \overline{U_i}\setminus U_i\}=\emptyset$, for each $i\in \{1,\ldots,k\}$. But if $g\in G$ is such that $T_ga\notin U_1$, say, then $T_ga \in U_i^{\circ}$ for some $i\in \{2,\ldots,k\}$. Since $\overline{U_i} \cap U_j^{\circ}=\emptyset$, for $i\neq j$, the result follows. 
\end{proof}

This observation raises the natural question 
of whether the converse holds. At least for two colourings, this is 
trivially true. Indeed, if a set $C\subset G$, with $C\neq G$, is minimal, 
then so is its complement and so is the $2$-colouring 
$G=C \cup (G\setminus C)$ because they all share the same Furstenberg 
system (see also Section \ref{isomorphic subsection}). 
However, this is not true in general. 

\begin{proposition}\label{minimal colours non minimal colouring}
There exist non-minimal colourings $G=C_1\cup \cdots \cup C_r$ with all colours being minimal.    
\end{proposition}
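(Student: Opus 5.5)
We plan to give an explicit example with $G=\Z$, built from an irrational rotation and an arc partition of $\T$ whose boundary points are visited by the orbit of $0$. Each colour taken alone should be strongly dynamically syndetic, so it is a minimal set by Theorem~\ref{characterizing minimal sets}. The colouring as a whole should contain a finite word that occurs only once, so its symbolic point is not uniformly recurrent.

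\textbf{The construction.} Fix an irrational $\alpha\in(1/3,1/2)$, say $\alpha=\sqrt2/4$. Consider the minimal system $(\T,R_\alpha)$ with the point $a=0$. Partition $\T=[0,1)$ into the four arcs
$$I_1=[0,1/3),\quad I_2=[1/3,\alpha],\quad I_3=(\alpha,1/2),\quad I_4=[1/2,1),$$
and set $C_i=\{n\in\Z: n\alpha \bmod 1\in I_i\}$. The orbit $\{n\alpha\}$ meets the set of endpoints $\{0,1/3,\alpha,1/2\}$ only at $n=0$ (the point $0$) and $n=1$ (the point $\alpha$); the points $1/3$ and $1/2$ are never hit, since $\alpha\notin\Q$.

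\textbf{Each colour is minimal.} We check the condition of Definition~\ref{strongly dynamically syndetic sets} for each $C_i$ with $U=I_i^{\circ}$ together with the endpoints that the orbit actually visits. For $C_1$, use $U=[0,1/3)$. Its boundary point $0$ is hit but lies in $U$, and $1/3$ is never hit, so $\{n: n\alpha\in\overline U\setminus U\}=\emptyset$. For $C_2$, the hit endpoint $\alpha$ belongs to $I_2$ and $1/3$ is not hit. For $C_3$, we may replace $U$ by the open arc $(\alpha,1/2)$. The hit endpoint $\alpha$ is not in $U$, so formally this gives $\overline U\setminus U\ni\alpha$, which is visited at $n=1$. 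To avoid this we would instead realise $C_3$ through a different minimal system; the more robust route is to verify uniform recurrence of $\mathbbm 1_{C_3}$ directly via Lemma~\ref{uniformly recurrent symbolic points}. Near $\alpha$, the colour $C_3$ only ``sees'' $\alpha$ from the left, where it is excluded. So every initial word of $\mathbbm 1_{C_3}$ agrees with the pattern of some point $x$ slightly to the left of $0$ (rotated accordingly), and such patterns recur syndetically by minimality of the rotation. The same one-sided argument handles $C_4$ at $0$, approached from the right. (This is the step I expect to need the most care.) Should it prove awkward, the cleanest fix is to check, for each colour $i$ separately, that every visited endpoint is approached consistently from a single side.

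\textbf{The colouring is not minimal.} The word $12;0,1$, i.e.\ $f(0)=1$ and $f(1)=2$, occurs at $0$. If it occurs at $m$, then $m\alpha\in[0,1/3)$ and $m\alpha+\alpha\in[1/3,\alpha]$. Since $\alpha>1/3$, the second condition forces $m\alpha\le 0 \pmod 1$ within the relevant range, hence $m\alpha\equiv 0$ and $m=0$. So this initial word occurs only once, not syndetically. By Lemma~\ref{uniformly recurrent symbolic points}, the point $f\in\{1,2,3,4\}^{\Z}$ is not uniformly recurrent. Since the symbolic system is the orbit closure of $f$, it contains a proper subsystem (any minimal subset of the orbit closure misses $f$), so it is not minimal. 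This proves the proposition.
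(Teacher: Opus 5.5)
Your construction follows the paper's approach. The paper also uses an irrational rotation with four arcs and exactly two orbit points on the boundary ($0$ and $2\alpha$ there, with $1/7<\alpha<2/7$ and the once-occurring word $122$; $0$ and $\alpha$ for you). The inclusions force the whole colouring to approach one boundary point from the right and the other from the left, while each single colour sees only one of them. Your check that $12;0,1$ occurs only at $0$ is correct, and so is the conclusion that the colouring is not minimal.

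One step is wrong as written. Definition~\ref{strongly dynamically syndetic sets} requires $U$ to be open, so $U=[0,1/3)$ for $C_1$ and $U=[1/3,\alpha]$ for $C_2$ are not admissible. With the open arcs instead, $C_1$ would lose $0$ and $C_2$ would lose $1$, so Theorem~\ref{characterizing minimal sets} does not apply. This is the same obstruction you spotted for $C_3$, and your one-sided argument repairs all four colours. Fix a colour and an initial word $a_1\cdots a_k;g_1,\dots,g_k$ of its indicator. Every $g_j$ whose orbit point is not the visited endpoint lies at positive distance from both endpoints of that colour's arc. Hence the word also occurs at every $h$ with $h\alpha \bmod 1\in(0,\varepsilon)$ for $C_1$ and $C_4$, or with $h\alpha \bmod 1\in(1-\varepsilon,1)$ for $C_2$ and $C_3$. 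These sets of $h$ are syndetic by minimality of the rotation. Lemma~\ref{uniformly recurrent symbolic points} and Proposition~\ref{minimal systems and uniform recurrence} then give minimality of each colour. Alternatively, as the paper does, invoke Proposition~\ref{characterizing minimal sets stronger}. It applies to $C_3$ and $C_4$ directly, as open arcs with a single bad point. It applies to $C_1$ and $C_2$ through their complements, using that a set and its complement share a Furstenberg system.
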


\begin{proof}
We once again consider an irrational rotation 
$(\T,R_{\alpha})$, some $\alpha \notin \Q$ and say 
$1/7<\alpha<2/7$. Then, we define the 
sets $C_1=\{n\in \Z: n\alpha\in [0,1/7)\}$, $C_2=\{n\in \Z: n\alpha\in (1/7,2\alpha]\}$, $C_3=\{n\in \Z: n\alpha\in (2\alpha, 6/7\}$ and $C_4=\{n\in \Z:n\alpha\in (6/7,1)\}$. It is clear that $\Z=C_1\cup C_2 \cup C_3 \cup C_4$ is indeed a colouring of $\Z$, since $\alpha \notin \Q$. Moreover, the Furstenberg system of this colouring is not minimal. To see this, note that if $\chi \in \{1,2,3,4\}^{\Z}$ is the colouring, the word $122$ occurs exactly once at $0$. For, $\chi(0)=1$ and $\chi(1)=\chi(2)=2$ and, if $n\neq 0$ is such that $\chi(n)=1$, then $n\alpha \in (0,1/7)$ and thus $(n+2)\alpha \in (2\alpha,2\alpha+1/7)$, hence $\chi(n+2)=3$.

Unfortunately, none of the sets $C_1,C_2,C_3,C_4$ is 
represented as a strongly syndetic set and thus we can 
not directly apply Theorem \ref{characterizing minimal sets} to show they are indeed minimal. Nevertheless, this follows from Proposition \ref{characterizing minimal sets stronger} below. 
\end{proof}

As we explained in Remark 
\ref{minimal set theorem discussion}, the utility of Theorem \ref{characterizing minimal sets} lies in that it allows us to conclude that a set is minimal in a simple way, if a particular representation of a set is available. But as we saw in the proof of Proposition \ref{minimal colours non minimal colouring}, the definition of strongly dynamically syndetic sets may be somewhat restrictive; indeed, it does not immediately cover sets like $H=\{n\in \Z: R^n0\in U\}$, where $R$ denotes the rotation by $\alpha \notin \Q$ and $U=(0,1/3)$, which are minimal. Notice that in $H$ there exists exactly one \textit{bad point} in the orbit of $0$, namely a point $R^n0$ such that $R^n0\in \overline{U}\setminus U$ and that, moreover, any open neighbourhood of this bad point intersect $U^{c}$ with non-empty interior. One may refer to a set $H$ with such a representation as \textit{almost strongly dynamically syndetic} and it turns out that the existence of such a bad point can be ramified so that the proof of Theorem \ref{characterizing minimal sets} essentially carries through in this case as well. 

\begin{proposition}\label{characterizing minimal sets stronger}
Let $G$ be a semigroup. Suppose $H=\{g\in G: T_ga\in U\}$, where $(X,(T_g))$ is a minimal $G$-system, and $U\subset X$ is a non-empty open set. Moreover, assume there exists a single bad point $T_ba\in \overline{U}\setminus U$, for some $b\in G$, but any open neighborhood of $T_ba$ intersects $X\setminus U$ with non-empty interior. Then, $H$ is minimal.    
\end{proposition}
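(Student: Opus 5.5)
We would follow the proof of Theorem~\ref{characterizing minimal sets} as closely as possible. By Theorem~\ref{FS are isomorphic intro} it suffices to show that the symbolic system $(X_{R,H},(R_g))$ is minimal. By Proposition~\ref{minimal systems and uniform recurrence} this reduces to showing that $\mathbbm{1}_H$ is uniformly recurrent. By Lemma~\ref{uniformly recurrent symbolic points} it is then enough to check that every initial word $a_1\cdots a_k;g_1,\dots,g_k$ of $\mathbbm{1}_H$ occurs along a left-syndetic set.

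Fix such a word and split the indices into good and bad ones. An index $j$ is \emph{good} if $T_{g_j}a\neq T_ba$, and \emph{bad} if $T_{g_j}a=T_ba$. For a good index, exactly as before, $T_{g_j}a$ lies either in $U$ (if $a_j=1$) or in $X\setminus\overline{U}$ (if $a_j=0$), since the only orbit point in $\overline{U}\setminus U$ is $T_ba$. We set $U_j=U$ or $U_j=X\setminus\overline U$ accordingly. For a bad index we necessarily have $a_j=0$, and we set $U_j=X\setminus\overline U$. This set does not contain $T_{g_j}a$, but by hypothesis it meets every neighbourhood of $T_ba$. The goal is to produce a \emph{non-empty} open set
\[
W=\bigcap_{j=1}^k T_{g_j}^{-1}(U_j)\cap(\text{small neighbourhood of }a).
\]
Every $x\in W$ then satisfies $F_j(x):=\mathbbm{1}_U(T_{g_j}x)=a_j$ for all $j$. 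By minimality and Proposition~\ref{minimal systems and uniform recurrence}, the set $\{g\in G: T_ga\in W\}$ is left-syndetic, and for each such $g$ the word occurs at $g$, since $T_{g_jg}a\in U_j$ forces $\mathbbm{1}_H(g_jg)=a_j$.

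To see that $W\neq\emptyset$, first let $V$ be an open neighbourhood of $a$ with $T_{g_j}V\subset U_j$ for every good $j$; this exists by continuity. We then need some point $x\in V$ with $T_{g_j}x\in X\setminus\overline U$ for all bad $j$. Since all bad indices send $a$ to the same point $T_ba$, it suffices to choose $x$ near $a$ whose images under the relevant $T_{g_j}$ fall into the open set $X\setminus\overline U$ near $T_ba$. Note that $W$ itself is automatically open as a finite intersection of preimages of open sets.

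\textbf{Main obstacle.} When $G$ is a group, each $T_{g_j}$ is a homeomorphism. Shrinking $V$, the set $O=\bigcap_{j\text{ bad}}T_{g_j}V$ is then an open neighbourhood of $T_ba$; here we use that all bad $g_j$ satisfy $T_{g_j}a=T_ba$, and we may even reduce to a single bad $g_j$ by noting that $T_{g_j}^{-1}T_{g_i}$ fixes $T_ba$. By hypothesis $O$ meets the open set $X\setminus\overline U$, and pulling back a point of the intersection gives the required $x$. For general semigroups, $T_{g_j}$ need not be open, and this is the step we expect to be delicate. We would first try to exploit minimality: since $X\setminus\overline U$ is a non-empty open set near $T_ba$, Lemma~\ref{minimal systems by coverings} and transitivity of $a$ provide $h$ with $T_ha$ arbitrarily close to $a$. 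We would then try to choose $h$ so that the finitely many points $T_{g_jh}a$ avoid $\overline U$ for bad $j$. Failing that, we would read the hypothesis ``any open neighbourhood of $T_ba$ intersects $X\setminus U$ with non-empty interior'' as being applied to the images $T_{g_j}V$, which is exactly what the argument needs.
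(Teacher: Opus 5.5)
Your overall scheme is the paper's: reduce to uniform recurrence of $\mathbbm{1}_H$ via Lemma~\ref{uniformly recurrent symbolic points}, and treat the coordinates with $T_{g_j}a\neq T_ba$ as in Theorem~\ref{characterizing minimal sets}. The bad coordinates, however, are not handled correctly.

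You allow several bad coordinates and, in the group case, pick $y\in O\cap(X\setminus\overline U)$ and ``pull it back''. But $T_{g_j}^{-1}y$ depends on $j$, so this does not give one $x\in V$ with $T_{g_j}x\notin\overline U$ for all bad $j$. What you would need is $y$ near $T_ba$ with $y\notin\overline U$ and $\psi(y)\notin\overline U$, where $\psi=T_{g_i}T_{g_j}^{-1}$. (It is this map, not $T_{g_j}^{-1}T_{g_i}$, that fixes $T_ba$.) Such a $\psi$ may send the part of $X\setminus\overline U$ near $T_ba$ into $U$.

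In fact, under your reading the statement is false. Let $D_\infty=\langle R,S\rangle$ act minimally on $\T$ by $T_Rx=x+\alpha$ and $T_Sx=-x$, with $\alpha\notin\Q$. Take $a=0$ and $U=(0,\epsilon)$ with $\epsilon\notin\Z\alpha+\Z$. Then:
\begin{itemize}
\item The only orbit point in $\overline U\setminus U$ is $0=T_ea=T_Sa$.
\item Every neighbourhood of $0$ meets $X\setminus U$ in a set with non-empty interior.
\item Let $\delta=m\alpha \bmod 1\in(0,\epsilon)$. The initial word with values $0,0,1$ at $e,S,R^m$ occurs at $h$ if and only if $y=T_h0$ satisfies $y\notin(0,\epsilon)$, $-y\notin(0,\epsilon)$ and $y+\delta\in(0,\epsilon)$.
\item These conditions force $y=0$, i.e.\ $h\in\{e,S\}$, so $\mathbbm{1}_H$ is not uniformly recurrent and $H$ is not minimal.
\end{itemize}
The hypothesis must therefore be read as the paper's proof uses it: $b$ is the \emph{only} element of $G$ with $T_ba\in\overline U\setminus U$ (the proof asserts $\{g: T_g(T_ba)\in\overline U\setminus U\}=\{e\}$). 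Then each word has at most one bad coordinate, $g_j=b$, and for groups your argument with $O=T_bV$ is correct.

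For semigroups your proposal stops at the possible non-openness of $T_{g_j}$. Choosing $h$ ``cleverly'' is not an argument, and applying the hypothesis to the images $T_{g_j}V$ assumes more than the statement gives. The paper's device is to move the base point to the bad point. Replace $a$ by $T_ba$ and $H$ by $Hb^{-1}=\{g: T_g(T_ba)\in U\}$; the bad coordinate becomes $e$, where $T_e$ is the identity, so no openness is needed. Then:
\begin{itemize}
\item Choose a neighbourhood $V$ of $T_ba$ with $T_{g_j}V\subset U_j$ for the good $j$.
\item The hypothesis makes $V\cap(X\setminus\overline U)$ a non-empty open set.
\item Every $h$ with $T_h(T_ba)$ in this set is an occurrence of the word, and these $h$ form a left-syndetic set by minimality.
\end{itemize}
Be aware that this moves the burden to the equivalence ``$H$ minimal $\iff Hb^{-1}$ minimal'', which the paper imports from Section~\ref{isomorphic subsection}. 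It holds because $\mathbbm{1}_H$ and $R_b\mathbbm{1}_H=\mathbbm{1}_{Hb^{-1}}$ have the same orbit closure, and that uses that $G$ is a group.
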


\begin{proof}
Suppose $H=\{g\in G: T_ga\in U\}$ as in the statement. 

A first reduction is that without loss of generality we 
may assume $b=e$, the identity element of $G$. Indeed, 
$Hb^{-1}=\{g\in G: gb\in H\}=\{g\in G: T_g(T_{b}a)\in U\}$, and $\{g\in G: T_g(T_ba)\in \overline{U}\setminus U\}=\{e\}$. It follows by the results in Section \ref{isomorphic subsection} that $H$ is minimal if and only if $Hb^{-1}$ is. 

Under these assumptions we proceed as in the proof of 
Theorem \ref{characterizing minimal sets}. Indeed, all 
we need to check is that any word in $\mathbbm{1}_H$ 
that occurs at $e$, which includes the bad coordinate, namely $a_1\cdots a_k;g_1,\dots,g_k$, where some $g_i=e$, occurs left-syndetically. For all the other 
words that occur in $\mathbbm{1}_H$, we can use the 
argument of the proof of Theorem \ref{characterizing minimal sets}.

By assumption, we can find an open neighborhood $V$ of $a$, where $a\in \overline{U}\setminus U$, (note that $V\setminus U$ has non-empty interior) and then, a non-empty open set $U(g_1,\ldots,g_k)\subset V\setminus U$, such that if $T_ha\in U(g_1,\ldots,g_k)$ we also have that $T_{g_ih}a \in U \iff T_{g_i}a\in U$. The fact that $a\in V\setminus U$ gives $T_{h}a\in X\setminus U$. Thus, for every $h\in U(g_1,\ldots,g_k)$ the word $a_1\cdots a_k;g_1,\dots,g_k$ occurs at $h$, and since $U(g_1,\ldots,g_k)$ is open and the system is minimal, the word occurs left-syndetically.
\end{proof}

\subsection{Sets of topological recurrence}
\label{top_recurrence}
Our next goal is to connect Furstenberg systems with 
\textit{sets of topological recurrence} which have been extensively studied in the literature (see, for example, \cite{Donoso_Hernandez_Maass, Forest, GHSW, GKLMRR, Glas_Kouts_Rich, Griezmer1, Griezmer2, Host_Kra_Mass, Huang_Shao_Ye, Katznelson, Kriz}). A particularly important role for this purpose is taken by minimal sets studied in Section \ref{sec:minimal}. 
Recall that a set $S\subset G$ is called minimal if its Furstenberg system 
is a minimal system and that this happens precisely when $S$ is strongly dynamically syndetic, according to Definition \ref{strongly dynamically syndetic sets}.  

Given a semigroup $G$ and a set $R\subset G$, we let $R^{*}=R\setminus \{e_G\}$. We also let $g^{-1}S=\{h\in G: gh\in S\}$, for $g\in G$ and $S\subset G$.

\begin{definition}\label{sets of recurrence}
A set $R\subset G$ is a set of topological recurrence if for any 
minimal action $(T_g)_{g\in G}$ on a compact space $X$ and any 
non-empty open set $U\subset X$, there is $g\in R^{*}$, such that 
$U\cap T_g^{-1}U\neq \emptyset$.     
\end{definition}

\begin{remark*}
Definition \ref{sets of recurrence} sometimes has the 
assumption that the space $X$ is also metrizable. The two 
definitions turn 
out to be actually equivalent, and we use the above in 
order to 
be able to consider symbolic systems with phase space the 
form 
$X=K^G$, where $K\subset\C$ is compact and $G$ an 
arbitrary 
semigroup, so that $X$ may not be metrizable.
\end{remark*}

Sets of topological recurrence have combinatorial counterparts.

\begin{definition}
A set $R\subset G$ is a set of chromatic recurrence, or, chromatically intersective, if for any 
finite colouring $G=C_1\cup \cdots \cup C_r$, there is a colour 
class $C_i$, $i\in \{1,\ldots,r\}$, and some $g\in R^{*}$, such 
that $C_i \cap g^{-1}C_i \neq \emptyset$.    
\end{definition}

It is well-known (see, for example, \cite{McCutcheon} for 
a proof in the integer case) that a 
set $R\subset G$ is a set of topological recurrence if 
and only if it is a set of chromatic recurrence if and 
only if for any
left-syndetic set $S\subset G$, there is $g\in R^{*}$ 
such that 
$S\cap g^{-1}S \neq \emptyset$. 

Observe that a finite colouring 
of $G$ is simply a finite-valued function defined on $G$. Given
the equivalence of sets of recurrence with sets of chromatic 
recurrence, it should come as no surprise that we can actually 
show equivalence with the seemingly weaker notion of \textit{{minimal chromatic 
intersectivity}}, namely chromatic intersectivity for minimal colourings (these are finite-valued functions on $G$ with minimal Furstenberg systems).

Furthermore, minimal sets are left-syndetic, as follows by Theorem 
\ref{characterizing minimal sets} and the proof of Proposition \ref{minimal systems and uniform recurrence}. The converse is not true, namely, there are 
left-syndetic sets which are not minimal. To see this simply take any set 
that is both syndetic and thick in the integers\footnote{A subset of $\Z$ is thick if it contains arbitrarily large intervals, or, equivalently, if it intersects every syndetic set.} with non-empty 
complement, and note that its complement cannot be syndetic, hence it cannot 
be minimal, and so the original set cannot be minimal either (since, 
as explained in Section \ref{isomorphic subsection}, a set and its complement have 
the same topological Furstenberg system).  
In fact, as was mentioned in Subsection \ref{sec:minimal}, it's not even true that all syndetic sets are dynamically syndetic. 

Combining all of the above facts it should perhaps be expected that the 
following holds.  

\begin{proposition}
Let $G$ be a semigroup and let $R\subset G$. The following are equivalent:
\begin{enumerate}[(i)]
    \item $R$ is a set of topological recurrence. 
    \item $R$ is a set of chromatic recurrence.
    \item For any left-syndetic set $S\subset G$, there is $g\in R^{*}$ such that $S\cap g^{-1}S \neq \emptyset$.
    \item For any \textit{minimal} colouring $G=C_1\cup \cdots \cup C_r$, there is a colour class $C_i$, $i\in \{1,\ldots,r\}$, and some $g\in R^{*}$, such that $C_i \cap g^{-1}C_i \neq \emptyset$.
    \item For any dynamically syndetic $H\subset G$,  there is some $g\in R^{*}$ such that $H\cap g^{-1}H \neq \emptyset$.
    \item For any minimal set $H\subset G$, there is some $g\in R^{*}$ such that $H\cap g^{-1}H \neq \emptyset$.
\end{enumerate}
\end{proposition}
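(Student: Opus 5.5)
The equivalence (i)$\iff$(ii)$\iff$(iii) is the well-known one quoted just before the statement, so I take it as given. The plan is to chain the remaining conditions onto it:
\[
\text{(iii)}\Rightarrow\text{(v)}\Rightarrow\text{(vi)}\Rightarrow\text{(iv)}\Rightarrow\text{(ii)},
\]
and to add (i)$\Rightarrow$(v) directly.

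\textbf{The easy implications.}
\begin{enumerate}[(a)]
\item (iii)$\Rightarrow$(v): dynamically syndetic sets are left-syndetic, as implicit in the proof of Proposition~\ref{minimal systems and uniform recurrence}.
\item (v)$\Rightarrow$(vi): a minimal set is strongly dynamically syndetic by Theorem~\ref{characterizing minimal sets}, hence in particular dynamically syndetic.
\item (vi)$\Rightarrow$(iv): given a minimal colouring, Corollary~\ref{minimal colourings corollary} says each colour $C_1$ is a minimal set. Applying (vi) to $C_1$ gives $g\in R^*$ with $C_1\cap g^{-1}C_1\neq\emptyset$, which is what (iv) asks for.
\item (i)$\Rightarrow$(v), for completeness: if $H=\{g:T_ga\in U\}$ with $(X,(T_g))$ minimal, first use minimality to find $h$ with $T_ha\in U$. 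By (i) there is $g\in R^*$ with $U\cap T_g^{-1}U\neq\emptyset$. This open set contains some $T_ka$, and then $k\in H\cap g^{-1}H$.
\end{enumerate}

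\textbf{The main step: (iv)$\Rightarrow$(ii).} Take an arbitrary colouring $G=C_1\cup\cdots\cup C_r$ with $f(g)=i$ iff $g\in C_i$, and let $(X_{R,f},(R_g))$ be its symbolic Furstenberg system.
\begin{enumerate}[(a)]
\item By Zorn's lemma, $X_{R,f}$ contains a minimal subsystem $Y$. Pick $y\in Y$ and define the colouring $D_i=\{g: y(g)=i\}$; discard any empty classes.
\item $(Y,(R_g))$ is a Furstenberg system of $y$: the point $y$ is transitive in $Y$ by minimality, and evaluation at $e$ separates orbits. Hence $\{D_i\}$ is a minimal colouring.
\item By (iv) there are $i$, $g\in R^*$ and $h$ with $y(h)=i=y(gh)$.
\item Since $y\in X_{R,f}$, which is the orbit closure of $f$, the cylinder $\{x: x(h)=i,\ x(gh)=i\}$ contains some $R_uf$. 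That is, $f(hu)=f(ghu)=i$, so $hu\in C_i\cap g^{-1}C_i$.
\end{enumerate}

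\textbf{Expected obstacle.} The delicate point is step (d), the transfer back from $y$ to $f$. It relies on the convention that words are read along right shifts, $x(g)\mapsto x(gu)$, and this is what makes $g^{-1}C_i$ the right set to use. I expect that checking this side convention carefully in a noncommutative semigroup, including the existence of a minimal subsystem in a possibly nonmetrizable $X_{R,f}$, will be where care is needed.
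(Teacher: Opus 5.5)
Your proof is correct, but it is organised differently from the paper's.

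The paper proves $(ii)\Rightarrow(iv)$, $(i)\Rightarrow(v)$ and $(i)\Rightarrow(vi)$, leaving $(v)\Rightarrow(vi)$ implicit. It then gives two separate transfer arguments, $(iv)\Rightarrow(ii)$ and $(vi)\Rightarrow(ii)$. For $(vi)\Rightarrow(ii)$ it again takes a minimal subsystem $Y\subset X_{R,f}$ and notes that $Y\cap U_i$ is clopen in $Y$. So the corresponding hitting-time set is strongly dynamically syndetic, hence minimal by Theorem~\ref{characterizing minimal sets}. It then repeats the approximation by $R_uf$.

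You close everything in one explicit cycle $(iii)\Rightarrow(v)\Rightarrow(vi)\Rightarrow(iv)\Rightarrow(ii)$. This needs only one transfer argument, and your $(iv)\Rightarrow(ii)$ is exactly the paper's. Your $(vi)\Rightarrow(iv)$ becomes a one-liner. The price is reliance on Corollary~\ref{minimal colourings corollary}, whose proof in the paper goes through Theorem~\ref{minimal colourings}, which the paper states without proof. The paper's route uses only the proved Theorem~\ref{characterizing minimal sets}.

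The corollary is nonetheless easy to verify directly. The map $x\mapsto(\mathbbm{1}_{\{i\}}(x(g)))_{g\in G}$ is continuous and commutes with the right shifts. It sends $f$ to $\mathbbm{1}_{C_i}$, so it maps $X_{R,f}$ onto the orbit closure of $\mathbbm{1}_{C_i}$. Since factors of minimal systems are minimal, each $C_i$ is a minimal set.

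Your anticipated obstacle is not a real one. Zorn's lemma together with the finite intersection property gives a minimal subsystem in any compact Hausdorff system, with no metrizability needed. Your side convention in step (d) is also right: $R_uf\in\{x: x(h)=i=x(gh)\}$ says exactly $f(hu)=f(ghu)=i$.
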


\begin{proof}
The equivalence between $(i),(ii)$ and $(iii)$ is well-known. Since minimal colourings are colourings, $(ii)\implies (iv)$. 

Next, we show that $(i) \implies (v)$. Let $H\subset G$ be dynamically 
syndetic. Then, $H=\{g\in G: T_ga\in U\}$, where $(X,(T_g))$ is a minimal 
$G$-system, $U\subset X$ is some non-empty open set. By the definition of
topological recurrence, we find $g\in R^{*}$ such that $U\cap T_g^{-1}U \neq \emptyset$. By minimality, there is $h\in G$ such that $T_ha\in U\cap T_g^{-1}U$. Unravelling the definitions, this shows that $H\cap g^{-1}H\neq \emptyset$. 

Note that the same proof also shows $(i)\implies (vi)$, since minimal sets 
are strongly dynamically syndetic (Theorem \ref{characterizing minimal sets}). One 
can of course deduce $(vi)$ from $(iii)$ independently of this argument, since minimal sets are, in particular, dynamically syndetic. 

Assuming $(iv)$ we now prove $(ii)$. Let $G=C_1\cup \cdots \cup C_k$ be a colouring of $G$ and let $f:G\to \{1,\ldots,k\}$ encode this colouring. We let $U_i$ denote the cylinder sets that correspond to the colours $C_i$. If $(X_{R,f},(R_g))$ is the symbolic Furstenberg system of $f$, then we consider a minimal subsystem of it, $(Y,(R_g))$, which is in particular, a minimal colouring $G=D_1 \cup \cdots \cup D_m$, some $m\leq k$, with cylinder sets of the form $V_i=Y\cap U_i$ corresponding to the colours $D_i$, and this minimal colouring is encoded by some $y\in \{1,\ldots,m\}^G$. Then, by assumption, there is $i\in \{1,\ldots,m\}$ and $g\in R^{*}$ such that $D_i \cap g^{-1}D_i \neq \emptyset$. We can thus find $h\in G$ such that $R_hy \in V_i \cap R_{g}^{-1}V_i$. As $y\in Y$ and $Y$ is a subsystem of the orbit closure of $f$, and since $R_h$ is continuous, we find $u\in G$ such that $R_h(R_uf)\in V_i \cap R_{g}^{-1}V_i$. Unravelling the definitions, this implies that $hu \in C_i\cap g^{-1}C_i$, hence $R$ is a set of chromatic recurrence. 

Finally, a similar proof shows that $(vi)\implies (ii)$. Let $f:G\to \{1,\ldots,k\}$ encode a colouring of $G$, and $Y$ be a minimal subsystem of $(X_{R,f},(R_g))$, using the same notation as before. Then, there is a cylinder $U_i$ such that $V_i=Y\cap U_i\neq \emptyset$. But then, $V_i$ is clopen in $Y$ as $U_i$ is clopen in $X_{R,f}$ and thus $H=\{h\in G: R_hy \in V_i\}$ is strongly dynamically syndetic, hence minimal by Theorem \ref{characterizing minimal sets}. By assumption, there is $g\in R^{*}$ such that $H\cap g^{-1}H\neq \emptyset$, and so there is $h\in G$ such that $R_hy\in V_i \cap g^{-1}V_i$. The rest of the proof of this implication is similar to the previous one. 
\end{proof}

We want to finish by recalling an intriguing conjecture 
of Bergelson 
from \cite{Bergelson} related to sets of topological 
recurrence. The measurable counterpart of this notion is
that of a \textit{set of measurable recurrence}. Given a 
semigroup $G$, a set $R\subset G$ is called a set of 
measurable recurrence if for any measure preserving 
$G$-system, $(X,\mathcal{X},\mu,(T_g))$, and any set $A\in \mathcal{X}$, with $\mu(A)>0$, there exists $g\in R^{*}$ such that $\mu(A\cap T_g^{-1}A)>0$. 

An interesting construction due to K\v{r}\'{\i}\v{z} \cite{Kriz}, describes a set of topological recurrence in $\Z$ that is
not a set of measurable recurrence. It is not too 
difficult to see that sets of measurable recurrence in 
$\Z$ are always sets of topological recurrence; for 
instance by considering a fully supported invariant 
measure on a given minimal $\Z$-system, $(X,T)$. In fact, 
it can be shown that when a group $G$ is \textit{amenable}, sets of measurable recurrence are also sets of topological recurrence. 

Amenable groups are well-known for having a very wide 
range of equivalent definitions. We will not give one here, 
but we refer to \cite{Pier} for a rather comprehensive treatment. 
Bergelson's aforementioned conjecture {\cite[Conjecture, p.55]{Bergelson}} 
proposes yet another definition of amenability via notions of recurrence in measure preserving and dynamical systems.

\begin{conjecture*}{\cite[Conjecture, p.55]{Bergelson}}
A group $G$ is amenable if and only if any set of measurable recurrence $R\subset G$ is a set of topological recurrence.    
\end{conjecture*}

\bibliographystyle{abbrv}

\end{document}